\documentclass[reqno]{amsart}

\usepackage{amsmath}
\usepackage{amsthm}
\usepackage{amssymb}
\usepackage{mathtools}
\usepackage[hmarginratio=1:1]{geometry}
\usepackage{color}
\usepackage{graphicx}
\usepackage{enumitem}
\usepackage{mathrsfs}
\usepackage[T1]{fontenc}

\theoremstyle{definition}
\newtheorem{theorem}{Theorem}[section]

\newtheorem{proposition}[theorem]{Proposition}
\newtheorem{lemma}[theorem]{Lemma}
\newtheorem{corollary}[theorem]{Corollary}
\newtheorem{remark}[theorem]{Remark}

\numberwithin{equation}{section}
\numberwithin{table}{section}

\allowdisplaybreaks[3]

\newcommand{\NN}{\mathbb{N}}
\newcommand{\QQ}{\mathbb{Q}}
\newcommand{\FF}{\mathbb{F}}
\newcommand{\ZZ}{\mathbb{Z}}
\newcommand{\RR}{\mathbb{R}}
\newcommand{\CC}{\mathbb{C}}
\newcommand{\HH}{\mathbb{H}}
\newcommand{\trace}{\text{tr}}
\newcommand{\aut}{\text{Aut}}
\renewcommand{\pmod}[1]{\text{ (mod }#1)}
\newcommand{\legendre}[2]{\left(\frac{#1}{#2}\right)}
\newcommand{\sgn}{\text{sgn}}
\newcommand{\SL}{\text{SL}}
\newcommand{\holoproj}{\pi_{\text{hol}}}

\newcommand{\lcm}{\text{lcm}}
\newcommand{\ord}{\text{ord}}

\title[Moments of Hurwitz Class Numbers and Negative Bias Conjecture]{The bias conjecture for elliptic curves over finite fields and Hurwitz class numbers in arithmetic progressions}

\author{Ben Kane}
\address{The University of Hong Kong, Department of Mathematics, Pokfulam, Hong Kong}
\email{bkane@hku.hk}

\author{Sudhir Pujahari}
\address{School of Mathematics, National Institute of Science Education and Research, Bhimpur-Padanpur, Khurda, Odisha-752050, India.}
\email{spujahari@niser.ac.in/spujahari@gmail.com}
\author{Zichen Yang}
\address{Department of Mathematics, University of Hong Kong, Pokfulam, Hong Kong}
\email{zichenyang.math@gmail.com}

\date{\today}
\keywords{elliptic curves, moments, Hurwitz class numbers, bias conjecture}
\subjclass{11E41, 11F11, 11F27, 11F37, 11G05, 11G07}
\thanks{The research of the first author was supported by grants from the Research Grants Council of the Hong Kong SAR, China (project numbers HKU 17303618, 17314122, and 17305923). The second author is supported by Science and Engineering Research Board [SRG/2023/000930]. The third author was supported by the Dissertation Year Fellowship of the Unversity of Hong Kong. }
\begin{document}

\begin{abstract}
In this paper, we consider a version of the bias conjecture for second moments in the setting of elliptic curves over finite fields whose trace of Frobenius lies in an arbitrary fixed arithmetic progression. Contrary to the classical setting of reductions of one-parameter families over the rationals, where it is conjectured by Steven J. Miller that the bias is always negative, we prove that in our setting the bias is positive for a positive density of arithmetic progressions and negative for a positive density of arithmetic progressions. Along the way, we obtain explicit formulas for moments of traces of Frobenius of elliptic curves over finite fields in arithmetic progressions and related moments of Hurwitz class numbers in arithmetic progressions, the distribution of which are of independent interest.
\end{abstract}

\maketitle

\section{Introduction}
Let a prime number $p$ and integers $k\geq0,r,M\geq1,m\in\ZZ$ be given. We consider the elliptic curves $E$ over the finite field $\FF_{p^r}$ with $p^r$ elements whose trace of Frobenius $\trace(E)$ lies in the arithmetic progression $\trace(E)\equiv m\pmod{M}$. In \cite{kane2020distribution}, for $k$ even, the first two authors obtained an asymptotic formula for
\begin{equation}
\label{eqn::definemomenttrace}
S_{k,m,M}(p^r)\coloneqq\sum_{\substack{E/\FF_{p^r}\\\trace(E)\equiv m\pmod{M}}}\frac{\trace(E)^k}{|\aut(E)|}
\end{equation}
with a power-saving error term. It is natural to investigate the remaining terms of the asymptotic expansion. In this paper, we apply the theory of non-holomorphic modular forms and holomorphic projection to make every term in the asymptotic expansion explicit, modulo the contribution from Fourier coefficients of cusp forms. Specifically, by Theorem \ref{thm::maintheorem1}, Theorem \ref{thm::maintheorem2}, and Lemma \ref{thm::twomomentsrelation}, we obtain an explicit expansion of the following shape:
\begin{equation}
\label{eqn::asymptoticexpansion}
S_{k,m,M}(p^r)=~
\left(\parbox{13em}{a polynomial in $p$ with coefficients being sums of divisors of $p^t$ for some $t\in \ZZ$}\right)
~ + ~
\left(\parbox{15em}{a polynomial in $p$ with coefficients being (normalized) $p^r$-th Fourier coefficients of newforms}\right).
\end{equation}

The study of the expansion of this shape is motivated by the bias conjecture of Steven J. Miller. In a similar vein, for a one-parameter family of elliptic curves $E_t$, a main asymptotic formula for the second moment 
\[
\sum_{t\pmod{p}}\trace\left(E_t\right)^2
\]
was found by Michel \cite{Michel}, and Miller \cite{Mlr1,Mlr2} conjectured that, when the $j$-invariant of this family is non-constant, the second largest term which does not average to zero across the primes will have a negative average. This conjecture is related to the low-lying zeros in families of L-functions, which is part of the study of $n$-level densities by Katz and Sarnak \cite{KS1,KS2} and has connections with the zeros of elliptic curves (GRH) and Birch and Swinnerton-Dyer conjecture. For a more detail description, the reader may refer to \cite{MW,Asada,Mlr1,Mlr2}. Note that the reduction $E_t\pmod{p}$ (assuming good reduction) gives an elliptic curve over $\FF_p$, so the result of Michel and the conjecture of Miller are about a certain average of second moments of traces of Frobenius of elliptic curves over the finite field $\FF_p$, which is analogous to the setting for $S_{2,m,M}(p)$; the primary difference lies in the way in which elliptic curves over $\FF_p$ are selected in the sum. 

The splitting in the asymptotic formula of Michel is naturally given in terms of cohomology and is motivic in the sense that the terms of ``size'' $p^{\alpha}$ may be written (see \cite[(1.1)]{KazalickiNaskrecki}) in the form 
\[
\sum_{j}\beta_{\alpha,j}^r
\]
with $|\beta_{\alpha,j}|=p^\alpha$. The $\beta_{\alpha,j}$ in this setting naturally appear in a certain zeta function. Based on this, if the average over the terms of ``size'' $p^{\alpha}$ vanishes across primes $p$, then one ignores any possible contribution from these terms when considering the average over terms of ``size'' $p^{\alpha'}$ with $\alpha'<\alpha$. Mirroring this, as conjectured by Weil and proven by Deligne \cite{deligne1974conjecture}, one may write the $p$-th Fourier coefficient $a(p)$ of a newform of weight $\kappa$ on a congruence subgroup $\Gamma$ in the form 
\[
a(p)=\beta_p+\gamma_p,
\]
where $|\beta_p|=|\gamma_p|=p^{\frac{\kappa-1}{2}}$ are the complex roots of the Hecke polynomial. This gives a well-defined ``size'' of the Fourier coefficients appearing in the expansion (\ref{eqn::asymptoticexpansion}) when considering averages like those of Miller. We adopt this definition of ``size'' and henceforth only consider the contribution of $a(p)$ to the average over those terms with size $p^{\frac{\kappa-1}{2}}$. Analogous statements exist for the $p^r$-th coefficients written as homogeneous polynomials of degree $r$ in $\beta_p$ and $\gamma_p$ via the action of the Hecke operators. When considering sums of divisors of $p^t$ in the expansion (\ref{eqn::asymptoticexpansion}), we naturally consider the divisor $p^{\ell}$ to contribute to the terms with ``size'' $p^{\ell}$. This yields a well-defined definition of ``size'' for every term in the expansion.

Let $\mathscr{S}_{k,m,M,r}(p^j)$ denote the terms of size $p^j$, so that 
\[
S_{k,m,M}(p^r)=\sum_{j\in\frac{1}{2}\ZZ} p^j\sum_{\alpha\in\mathscr{S}_{k,m,M,r}(p^j)}\frac{\alpha}{p^j}.
\]
Note that each term $\frac{\alpha}{p^j}$ has $\big|\frac{\alpha}{p^j}\big|=1$. Using this splitting of the formula, we define the averages
\[
\mathcal{A}_{k,m,M,r,j}:=\lim_{X\to\infty}\frac{1}{\pi(X)}\sum_{p\leq X} \sum_{\alpha\in \mathscr{S}_{k,m,M,r}(p^j)}\frac{\alpha}{p^j},
\]
where $\pi(X)$ is the number of prime numbers $\leq X$. By \cite[Theorem 1.2]{kane2020distribution}, for $k$ even, we have
$$
\mathcal{A}_{k,m,M,r,j}=
\begin{dcases}
C_{k/2} &\text{ if }j=r(k/2+1),\\
0&\text{ if }j>r(k/2+1),
\end{dcases}
$$
where $C_n$ is the $n$-th Catalan number. Hence, to study the bias conjecture, we need to find the largest $j<k/2+1$ for which $\mathcal{A}_{k,m,M,r,j}\neq 0$. The case closely related to the conjecture of Miller is when $k=2$ and $r=1$. In this case, we find that the second largest term which does not average to zero is of size $p$ (as is expected in the conjecture of Miller), and further our formula (\ref{eqn::asymptoticexpansion}) allows us to give a simple expression of the bias, yielding an easily-checked equivalent condition for checking the positivity or negativity of the bias.

\begin{theorem}\label{thm:biaseval}
Let $M\geq1, m\in\ZZ$ be given integers. Then the following results hold:
\begin{enumerate}[leftmargin=8mm]
\item We have 
\begin{equation}\label{eqn:biaseval}
\mathcal{A}_{2,m,M,1,1}=\frac{1}{2\phi(M)}\Bigg(2\prod_{p\mid M}\frac{1-\delta_{p\nmid m}/(p^2-p)}{1+1/p}-\delta_{\gcd(m-1,M)=1}-\delta_{\gcd(m+1,M)=1}\Bigg),
\end{equation}
where the product runs over prime divisors $p\mid M$, $\phi$ is Euler's totient function, and throughout $\delta_{S}=1$ if a statement $S$ is true and $\delta_S=0$ otherwise. 
\item We have $\mathcal{A}_{2,m,M,1,1}=0$ if and only if $M=1$.
\item For $M>1$, the average $\mathcal{A}_{2,m,M,1,1}\neq 0$ is the second largest non-zero average. In particular, for $M>1$ the bias is positive (resp. negative) if and only if \eqref{eqn:biaseval} is positive (resp. negative).
\end{enumerate}
\end{theorem}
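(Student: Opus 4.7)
The plan is to specialize Theorems \ref{thm::maintheorem1} and \ref{thm::maintheorem2} together with Lemma \ref{thm::twomomentsrelation} to the case $k=2$, $r=1$, giving the expansion (\ref{eqn::asymptoticexpansion}) for $S_{2,m,M}(p)$ explicitly, and then extract the size-$p$ contribution after averaging over primes. In this case the leading term (size $p^2$) has normalized coefficient $C_1=1$ times the density of the arithmetic progression; the subleading part splits as in (\ref{eqn::asymptoticexpansion}) into a polynomial in $p$ whose coefficients are sums of divisors of $p^t$ for small $t$, plus a polynomial in $p$ whose coefficients are normalized $p$-th Fourier coefficients of newforms of weight at most $4$ (these being dictated by the Eichler--Selberg trace formula applied to the class-number sum $\sum_t t^2 H(4p-t^2)$ underlying $S_{2,m,M}(p)$).

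I would compute $\mathcal{A}_{2,m,M,1,1}$ by treating the two parts of (\ref{eqn::asymptoticexpansion}) separately. For the cuspidal part, each contribution has the form (polynomial in $p$)$\,\cdot a_f(p)$ for a newform $f$ of weight $\kappa \in \{2,4\}$ and contributes at size $p^{(\kappa-1)/2}$; by the prime number theorem for $L(s,f)$, the normalized sum $\sum_{p \leq X} a_f(p)/p^{(\kappa-1)/2}$ is $o(\pi(X))$, so the cuspidal contribution to every $\mathcal{A}_{2,m,M,1,j}$ vanishes. Hence only the divisor-sum part survives. Each divisor sum $\sum_{\ell=0}^{t} p^{\ell s}$ paired with an outer factor $p^a$ contributes at size $p^{a+\ell s}$; I would sift out the triples with $a+\ell s = 1$ and sum the resulting rational coefficients. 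The condition $t \equiv m \pmod{M}$ in the class-number sum is detected by Dirichlet characters modulo $M$, and the resulting character sums factor as a multiplicative function of $M$, producing the Euler product $\prod_{p\mid M}(1 - \delta_{p\nmid m}/(p^2-p))/(1+1/p)$ of (\ref{eqn:biaseval}). The two indicator corrections $\delta_{\gcd(m \pm 1, M)=1}$ should arise from the isolated boundary contributions at $t = \pm 1$, which are the only traces whose single elliptic curve contributions to $S_{2,m,M}(p)$ sit at size exactly $p$ when $m \equiv \pm 1 \pmod{M}$.

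Part (2) then follows from (1) by inspection: when $M = 1$ the empty Euler product equals $1$ and both indicators equal $1$, so (\ref{eqn:biaseval}) vanishes; when $M > 1$, the Euler product is a positive rational strictly less than $2$, while the sum of the two indicators lies in $\{0,1,2\}$, so a short case analysis on the pair $(\gcd(m\pm 1, M))$ and on the prime factorization of $M$ rules out equality. For (3), I would check that no nonzero average of size $p^{j}$ occurs for $j \in (1,2)$: the only conceivable intermediate size is $p^{3/2}$, which can arise only from weight-$4$ newform Fourier coefficients and therefore averages to zero by the PNT for $L(s,f)$; no divisor-sum term of size $p^{3/2}$ exists because every exponent $a + \ell s$ appearing is an integer. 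Combined with (1) and (2), this shows that $\mathcal{A}_{2,m,M,1,1}$ is the second-largest nonvanishing average whenever $M > 1$, so the sign of (\ref{eqn:biaseval}) determines the sign of the bias.

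The main obstacle is the explicit bookkeeping of the size-$p$ part of the divisor-sum polynomial: one must correctly aggregate the contribution of the divisor $p^0 = 1$ multiplied by a linear outer factor, the contribution of the divisor $p^1$ multiplied by a constant outer factor, and the discrete boundary contributions from $t = \pm 1$ in the class-number sum, and then recognize the resulting character sum as the multiplicative Euler product in (\ref{eqn:biaseval}). Carefully tracking the cases $p \mid m$ versus $p \nmid m$ at each prime $p \mid M$ --- which is what produces the factor $\delta_{p\nmid m}$ inside the product --- is the most delicate part of the computation.
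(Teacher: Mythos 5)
Your overall route is the same as the paper's: specialize Theorem \ref{thm::maintheorem2}, formula \eqref{eqn::highermomentexplicitformula} and Lemma \ref{thm::twomomentsrelation} to $k=2$, $r=1$, discard the cuspidal contributions on average (the paper cites the Sato--Tate theorem via \cite{CHST}; your appeal to the prime number theorem for $L(s,f)$ is an acceptable substitute), isolate the coefficient of $p$, and average over primes using equidistribution in residue classes; part (3) is handled exactly as you describe. However, there is a concrete error in your account of where the two indicator terms come from. They do not arise from ``boundary contributions at $t=\pm1$'' or from ``single elliptic curve contributions'': no elliptic curve over $\FF_p$ has trace $\pm(p+1)$, and the terms $t=\pm1$ of the class-number sum contribute a genuine Hurwitz class number $H(4p-1)$, of size roughly $p^{1/2+\epsilon}$ and already absorbed into the Eisenstein main term. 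The indicators instead come from the correction term $\lambda_{0,m,M}(p)$ of \eqref{eqn::sumsmallerdivisor} (the holomorphic projection of the non-holomorphic part $\mathcal{H}^{-}$, i.e.\ the degenerate terms of the Hurwitz--Kronecker relation): the unique solution of $t^2-s^2=4p$ is $(t,s)=(p+1,p-1)$ with smaller divisor $1$, this enters the size-$p$ coefficient after multiplication by the outer factor $p$ from \eqref{eqn::highermomentexplicitformula}, and averaging $\delta_{p\equiv m\mp1\pmod{M}}$ over primes by Dirichlet's theorem yields $\delta_{\gcd(m\mp1,M)=1}/\phi(M)$. If you chased the $t=\pm1$ terms as proposed, you would not recover these indicators.

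Second, part (2) is not actually proved. You correctly reduce to showing that $f\coloneqq2\prod_{q\mid M}\bigl(1-\delta_{q\nmid m}/(q^2-q)\bigr)/(1+1/q)$ satisfies $0<f<2$, so that $\mathcal{A}_{2,m,M,1,1}=0$ would force $f=1$ together with exactly one of $\gcd(m-1,M)=1$ and $\gcd(m+1,M)=1$; but ``a short case analysis rules out equality'' hides the real content. Note that $f=1$ \emph{does} occur (e.g.\ $m=M=6$), so one cannot rule it out outright. The argument in the paper is: by inspecting $2$- and $3$-adic valuations and the prime-power case, $f=1$ forces $M=2^t3^s$ with $t,s\geq1$ and $6\mid m$; in that situation $m-1\equiv-1$ and $m+1\equiv1$ modulo $6$, so the two gcd conditions hold or fail together and their indicator sum is never equal to $1$, which is the required contradiction. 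Without an argument of this shape, statement (2) remains unestablished.
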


By studying \eqref{eqn:biaseval}, as a corollary we are able to show that both positive and negative biases occur with positive density. Moreover, in certain special cases we obtain simple conditions for the positivity or negativity of this bias.

\begin{theorem}[Theorem \ref{thm:bias}]
\label{thm:biasintro}
Let $M\geq1, m\in\ZZ$ be given integers. Then the following results hold:
\begin{enumerate}[leftmargin=8mm]
\item If $M$ is an odd prime power $M=p^t$, then $\mathcal{A}_{2,m,M,1,1}>0$ if and only if $m\equiv\pm1\pmod{p}$.
\item If $6\mid M$, then $\mathcal{A}_{2,m,M,1,1}<0$ if and only if either $\gcd(m-1,M)=1$ or $\gcd(m+1,M)=1$.
\item We have
\[
\liminf\limits_{X\to\infty}\frac{|\left\{(m,M)|1\leq m\leq M\leq X,\mathcal{A}_{2,m,M,1,1}>0\right\}|}{|\left\{(m,M)|1\leq m\leq M\leq X\right\}|}\geq\frac{1}{4}
\]
and
\[
\liminf\limits_{X\to\infty}\frac{|\left\{(m,M)|1\leq m\leq M\leq X,\mathcal{A}_{2,m,M,1,1}<0\right\}|}{|\left\{(m,M)|1\leq m\leq M\leq X\right\}|}\geq\frac{1}{2\pi^2}.
\]
\end{enumerate}
\end{theorem}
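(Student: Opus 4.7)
The plan is to deduce all three parts directly from the closed formula \eqref{eqn:biaseval} of Theorem \ref{thm:biaseval}. Writing
\[
P(m,M):=\prod_{p\mid M}\frac{1-\delta_{p\nmid m}/(p^{2}-p)}{1+1/p},\qquad \delta_{\pm}:=\delta_{\gcd(m\pm 1,M)=1},
\]
one has $P(m,M)>0$ always, and the sign of $\mathcal{A}_{2,m,M,1,1}$ agrees with the sign of $2P(m,M)-\delta_{-}-\delta_{+}$. In particular, $\delta_{-}=\delta_{+}=0$ already forces positivity, so the core task is to estimate $P$ against the number of non-vanishing deltas.

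For part (1), I specialize to $M=p^{t}$ with $p$ odd: the product $P$ collapses to a single factor equal to $p/(p+1)$ when $p\mid m$ and $(p^{2}-p-1)/(p^{2}-1)$ when $p\nmid m$. Running through the three possibilities $m\bmod p\in\{0,\pm 1,\text{other}\}$, the residue class $0$ gives $\delta_{-}=\delta_{+}=1$ and $2P-2=-2/(p+1)<0$; ``other'' (which is empty when $p=3$) gives $\delta_{-}=\delta_{+}=1$ and $2P-2=-2p/(p^{2}-1)<0$; and the classes $\pm 1$ give $\delta_{-}+\delta_{+}=1$ and $2P-1=(p^{2}-2p-1)/(p^{2}-1)$, positive for every odd prime.

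For part (2), I drop the numerator corrections to obtain the uniform upper bound $P(m,M)\leq\prod_{p\mid M}p/(p+1)$; since the factors at $p=2,3$ already multiply to $1/2$, we get $2P\leq 1$ whenever $6\mid M$. Combined with $P>0$ this immediately yields ``both deltas vanish $\Rightarrow \mathcal{A}_{2,m,M,1,1}>0$''. For the converse I must verify \emph{strict} negativity when $\delta_{-}+\delta_{+}\geq 1$. The subtlety is that $2P=1$ can occur: it happens precisely when $M$ is supported on $\{2,3\}$ and $p\mid m$ for every $p\mid M$, that is, $6\mid m$; but in that case both $m\pm 1$ are coprime to $M$, so actually $\delta_{-}+\delta_{+}=2$ and $2P-2=-1<0$. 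Otherwise $P<1/2$ and strict negativity is automatic.

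For part (3), the positive-density bound comes from the family of pairs $(m,M)$ with $M$ even and $m$ odd, since then $\gcd(m\pm 1,M)\geq 2$ forces $\delta_{\pm}=0$. A direct count gives $\sum_{M\leq X,\,2\mid M}M/2\sim X^{2}/8$, against a total $\sum_{M\leq X}M\sim X^{2}/2$, yielding density $1/4$. For the negative-density bound, part (2) guarantees $\mathcal{A}_{2,m,M,1,1}<0$ for every pair with $6\mid M$ and $\gcd(m-1,M)=1$; the count of such pairs equals $\sum_{M\leq X,\,6\mid M}\phi(M)$, which by Möbius inversion becomes
\[
\frac{X^{2}}{12}\prod_{p}\Big(1-\frac{\gcd(6,p)}{p^{2}}\Big)(1+o(1))=\frac{X^{2}}{4\pi^{2}}(1+o(1)),
\]
after computing $(1-2/4)(1-3/9)\prod_{p>3}(1-1/p^{2})=3/\pi^{2}$; this produces the density $1/(2\pi^{2})$.

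The main technical obstacle is the sharpness analysis in part (2) at the boundary case $M=2^{a}3^{b}$, where the easy inequality $2P\leq 1$ is not strict. Recovering strict negativity forces one to rule out the configuration by the observation that equality in $P=1/2$ actually compels $6\mid m$, which in turn produces $\delta_{-}+\delta_{+}=2$ rather than $1$, completing the argument.
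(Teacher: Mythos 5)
Your proof is correct and follows essentially the same route as the paper: all three parts are read off from the closed formula \eqref{eqn:biaseval}, part (1) by evaluating the single Euler factor in the three residue classes, part (2) from the bound $2\prod_{p\mid M}\frac{1-\delta_{p\nmid m}/(p^{2}-p)}{1+1/p}\le 1$ when $6\mid M$, and part (3) via the same two counts (even $M$ with odd $m$ for positivity, and $\sum_{6\mid M\le X}\phi(M)\sim X^{2}/(4\pi^{2})$ for negativity). The one place you genuinely diverge is the boundary case of part (2), where $M=2^{a}3^{b}$ and $6\mid m$ force equality $2P=1$: you correctly observe that then both $m-1$ and $m+1$ are coprime to $M$, so both indicator terms in \eqref{eqn:biaseval} equal $1$ and $\mathcal{A}_{2,m,M,1,1}=-1/(2\phi(M))<0$, consistent with the stated equivalence. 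The paper's own proof asserts at this point that $\gcd(m-1,M)>1$ and $\gcd(m+1,M)>1$ and that $\mathcal{A}_{2,m,M,1,1}>0$ there, which is incorrect (e.g.\ $(m,M)=(6,6)$ gives $\gcd(5,6)=\gcd(7,6)=1$); the theorem survives because the ``if and only if'' holds under either reading, but your treatment of this case is the accurate one.
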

\begin{remark}
We remark that our estimate of the densities of positive bias and of negative bias appear to be far from the values expected by numerical evidence. By running a computer program, we observed that
$$
\frac{|\left\{(m,M)|1\leq m\leq M\leq 1000,\mathcal{A}_{2,m,M,1,1}>0\right\}|}{|\left\{(m,M)|1\leq m\leq M\leq 1000\right\}|}\approx0.44,
$$
and
$$
\frac{|\left\{(m,M)|1\leq m\leq M\leq 1000,\mathcal{A}_{2,m,M,1,1}<0\right\}|}{|\left\{(m,M)|1\leq m\leq M\leq 1000\right\}|}\approx0.56.
$$
\end{remark}

Although the original negative bias conjecture of Miller \cite{Mlr1,Mlr2} is restricted to the setting of the second moment of the reductions from one-parameter families to those over the finite field $\FF_p$ with $p$ elements, it is natural to consider the bias question of higher moments over other finite fields. In our setting, this generalization is readily available from our methods. Using the formula (\ref{eqn::asymptoticexpansion}), one may study moments of any order in any arithmetic progression over any finite field. 

As an example, we consider similar biases for elliptic curves $E/\FF_{p^2}$ whose traces lie in given arithmetic progressions. Here the answer is rather simple when $4\nmid M$ and $M\geq 3$. To state the result, let $M_{\operatorname{odd}}:=M/2^{\ord_2(M)}$ be the odd part of $M$, where $\ord_p(M)$ is the largest power of the prime $p$ dividing $M$.

\begin{theorem}[Theorem \ref{thm::lasttheorem}]
\label{thm:biasintror=2}
Let $M\geq1,m\in\ZZ$ be given integers such that $M\geq 3$ and $4\nmid M$. Then $\mathcal{A}_{2,m,M,2,3}$ is the largest non-vanishing error term and $\mathcal{A}_{2,m,M,2,3}<0$ if and only if $\gcd(m,M_{\operatorname{odd}})=1$.
\end{theorem}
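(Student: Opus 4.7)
The plan is to apply the explicit asymptotic expansion \eqref{eqn::asymptoticexpansion} for $S_{2,m,M}(p^2)$ obtained from Theorem \ref{thm::maintheorem1}, Theorem \ref{thm::maintheorem2}, and Lemma \ref{thm::twomomentsrelation} with parameters $k=2$ and $r=2$, and then extract the terms of size $p^3$. The dominant size is $p^{r(k/2+1)}=p^{4}$, contributing $\mathcal{A}_{2,m,M,2,4}=C_1=1$. For the next candidate size observe that divisor-sum contributions in \eqref{eqn::asymptoticexpansion} have integer size, while newform contributions at the $p^r$-th coefficient have size $p^{r(\kappa-1)/2}=p^{\kappa-1}$, which is also an integer. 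Hence no half-integer sizes can occur between $p^3$ and $p^4$, and it suffices to analyse the contribution at size exactly $p^3$.

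On the newform side, the size-$p^3$ terms come from polynomial-in-$p$ prefactors combined with normalised $p^2$-nd Fourier coefficients $a_f(p^2)/p^{\kappa-1}$ of weight-$\kappa$ newforms such that the total size equals $p^3$. Each $a_f(p^2)$ coincides with the $p$-th coefficient of the symmetric square lift $\sympow^{2}f$. By Deligne's equidistribution theorem (Sato--Tate, established by Barnet-Lamb--Geraghty--Harris--Taylor in the non-CM case), or by direct computation via Hecke Gr\"ossencharacters in the CM case, the normalised coefficients $a_f(p^2)/p^{\kappa-1}$ average to zero across primes $p$; the necessary analytic input is the holomorphy of $L(s,\sympow^{2}f)$. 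Consequently, the newform side contributes nothing to $\mathcal{A}_{2,m,M,2,3}$.

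It remains to extract the divisor-sum portion of \eqref{eqn::asymptoticexpansion} of size exactly $p^3$ and analyse its sign. Proceeding in the spirit of the proof of Theorem \ref{thm:biaseval} for $r=1$, one obtains a closed-form expression for $\mathcal{A}_{2,m,M,2,3}$ as a linear combination of products over primes $p\mid M$ multiplied by indicator functions in $m$. Under the hypothesis $4\nmid M$ and $M\geq 3$, the contribution at the prime $2$ (present only when $\ord_2(M)=1$) is straightforward, and the sign of the whole expression is governed by the factors at the odd primes dividing $M$. A direct comparison of the surviving terms shows that $\mathcal{A}_{2,m,M,2,3}<0$ precisely when $\gcd(m,M_{\operatorname{odd}})=1$; in particular, this average is then non-zero and is therefore the largest non-vanishing error term. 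The main obstacle is this final extraction: one must track every divisor-sum contribution of total size $p^{3}$ through the holomorphic projection and the local factorisation over $M$, and simplify far enough to expose the dichotomy governed by $\gcd(m,M_{\operatorname{odd}})$.
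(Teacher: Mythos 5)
Your outline follows the same route as the paper (expand $S_{2,m,M}(p^2)$ via Theorem \ref{thm::maintheorem1}, Theorem \ref{thm::maintheorem2} and Lemma \ref{thm::twomomentsrelation}, discard the cuspidal contribution on average, and analyse the sign of the size-$p^3$ coefficient), and your treatment of the newform side is fine: the relevant coefficients here are $a_{0,m,M}(p^2)p^2$ and $a_{2,m,M}(p^2)$, both of size $p^{3}$, and the normalised $p^2$-nd coefficients $a_f(p^2)/p^{\kappa-1}=4\cos^2\theta_p-1$ do average to zero under Sato--Tate, as you say. Your observation that no half-integral sizes intervene between $p^3$ and $p^4$ is also correct in this case.

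However, the proposal stops exactly where the proof has to start. Everything after ``it remains to extract the divisor-sum portion'' is a promissory note: you assert that ``one obtains a closed-form expression'' and that ``a direct comparison of the surviving terms shows'' the dichotomy, but you neither derive the expression nor perform the comparison, and you flag this yourself as ``the main obstacle.'' More importantly, the proposal misses the structural reason the answer is governed by $\gcd(m,M_{\operatorname{odd}})$ rather than by $\gcd(m\pm1,M)$ as in the $r=1$ case: for $r=2$, Lemma \ref{thm::twomomentsrelation} contributes the extra term $-p^{3}H_{2,m\overline{p},M}(1)$, which is \emph{not} present when $r=1$ and which, averaged over $p$ (equivalently over $\overline{p}\in(\ZZ/M\ZZ)^{\times}$) together with the $\delta^{\star}_{p^2,m,M}$ term, produces the quantity $\epsilon_{m,M}$ in \eqref{eqn::biasformula2}. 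The whole sign analysis hinges on the fact that $\epsilon_{m,M}=0$ precisely when $\gcd(m,M_{\operatorname{odd}})>1$ (forcing $\mathcal{A}_{2,m,M,2,3}>0$), while when $\gcd(m,M_{\operatorname{odd}})=1$ one must still bound the positive Euler-product term by $2$ using multiplicativity, exactly as in the proof of Theorem \ref{thm::lasttheorem}, to conclude negativity. Without identifying this correction term and carrying out the local computation over the primes dividing $M$ (including the separate case $M\equiv2\pmod4$), the claimed equivalence is not established.
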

\begin{remark}
Although we restrict our attention to second moments in this paper, in line with the original conjecture in the one-parameter family setting, Theorem \ref{thm::lasttheorem} indicates how the method extends to investigate the bias in different directions. In particular, Theorem \ref{thm::lasttheorem} extends the investigation to finite fields with $p^2$ elements, and Zindulka and the first author are using the method to determine the bias in the one-parameter Legendre family of elliptic curves for higher moments and finite fields of arbitrary size.
\end{remark}
Our results about moments of traces of Frobenius follow by investigating moments for Hurwitz class numbers and using a relation of Schoof \cite{schoof1987nonsingular}. Moments of these Hurwitz class numbers are of independent interest, so we next summarize our results about moments of Hurwitz class numbers.  For every positive integer $D$, we denote by $H(D)$ the $D$-th Hurwitz class number, which is the number of equivalence classes of positive-definite binary quadratic forms of discriminant $-D$, with the class containing a multiple of $x^2+y^2$ weighted by $\frac{1}{2}$ and the class containing a multiple of $x^2+xy+y^2$ weighted by $\frac{1}{3}$. By convention, we set $H(0)\coloneqq-\frac{1}{12}$ and set $H(D)\coloneqq0$ for any negative integer $D$.

The classical Kronecker--Hurwitz class number relation \cite{kronecker1860ueber,gierster1883ueber,hurwitz1885ueber} asserts that
$$
\sum_{t\in\ZZ}H(4p-t^2)=2p,
$$
for any prime number $p$. In \cite{brown2008elliptic}, Kronecker--Hurwitz class number relations in arithmetic progressions in the following were considered:
$$
H_{m,M}(n)\coloneqq\sum_{t\equiv m\pmod{M}}H(4n-t^2),
$$
where $m\in\ZZ$ and a prime number $M>1$ are fixed. For example, it was shown in \cite{brown2008elliptic} that for any odd prime number $p$, we have
\begin{equation}
\label{eqn::simplekroneckerhurwitz}
H_{m,2}(p)=
\begin{dcases}
\frac{4p-2}{3}&\text{ if }m\equiv0\pmod{2},\\
\frac{2p+2}{3}&\text{ if }m\equiv1\pmod{2}.
\end{dcases}
\end{equation}
Similar identities for prime numbers $M=3,5,7$ were proved or conjectured in the same paper. Later the conjectures were resolved in \cite{bringmann2019sums} and other authors have considered further generalizations \cite{mertens2016eichler,zindulka2024sums}. 

In this paper, we study the $k$-th moment of Hurwitz class numbers in an arithmetic progression, defined by
$$
H_{k,m,M}(n)\coloneqq\sum_{t\equiv m\pmod{M}}t^kH(4n-t^2),
$$
for any integers $n\geq0,k\geq0,m\in\ZZ,M\geq1$. In particular, the zeroth moment $H_{m,M}(n)=H_{0,m,M}(n)$ is exactly the Kronecker--Hurwitz class number relation in the arithmetic progression $m\pmod{M}$ considered above. Set
\begin{equation}
\label{eqn::extendedcatalan}
\mathcal{C}_k\coloneqq
\begin{dcases}
C_{\frac{k}{2}} &\text{if }k\geq2\text{ is even},\\
0 &\text{if }k\geq1\text{ is odd},
\end{dcases}
\end{equation}
and set
\begin{equation}
\label{eqn::sumsmallerdivisor}
\lambda_{k,m,M}(n)\coloneqq\sideset{}{^*}\sum_{\substack{t^2-s^2=4n\\t\equiv m\pmod{M}\\t\geq1,s\geq0}}\left(\frac{t-s}{2}\right)^{k+1}+\sideset{}{^*}\sum_{\substack{t^2-s^2=4n\\t\equiv-m\pmod{M}\\t\geq1,s\geq0}}(-1)^k\left(\frac{t-s}{2}\right)^{k+1},
\end{equation}
where the star superscript in the inner sum means that the summand is weighted by $1/2$ if $s=0$. Then we can relate higher moments of Hurwitz class numbers in an arithmetic progression to the zeroth moment.

\begin{theorem}[(\ref{eqn::highermomentexplicitformula}) and Theorem \ref{thm::coefficientformula}]
\label{thm::maintheorem1}
For any integers $n\geq0,k\geq0,m\in\ZZ,M\geq1$, we have
$$
H_{k,m,M}(n)=\mathcal{C}_kn^{\frac{k}{2}}H_{m,M}(n)+\sum_{\mu=0}^{[(k-1)/2]}\frac{k-2\mu+1}{k-\mu+1}\binom{k}{\mu}\big(a_{k-2\mu,m,M}(n)-\lambda_{k-2\mu,m,M}(n)\big)n^{\mu},
$$
where $\lambda_{k,m,M}(n)$ is the sum of smaller divisors defined in (\ref{eqn::sumsmallerdivisor}) and $a_{k,m,M}(n)$ is the $n$-th Fourier coefficient of a holomorphic cusp form for any integer $k\geq1$.
\end{theorem}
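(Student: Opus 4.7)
The strategy has two stages, aligned with the two cited components (\ref{eqn::highermomentexplicitformula}) and Theorem \ref{thm::coefficientformula}. The first stage is a purely algebraic expansion of $t^k$ in the Chebyshev--Gegenbauer basis
\[
p_\ell(t,n) \coloneqq \frac{\alpha^{\ell+1}-\bar\alpha^{\ell+1}}{\alpha-\bar\alpha}, \qquad \alpha+\bar\alpha = t,\ \alpha\bar\alpha = n,
\]
which is the natural polynomial basis attached to the quadratic $x^2 - tx + n$ whose discriminant controls the support of $H(4n-t^2)$. An elementary induction yields
\[
t^k = \sum_{\mu=0}^{\lfloor k/2 \rfloor} \frac{k-2\mu+1}{k-\mu+1}\binom{k}{\mu}\, n^\mu\, p_{k-2\mu}(t,n),
\]
and since $p_0 \equiv 1$, the top term (present only for $k$ even) contributes $C_{k/2}\,n^{k/2}$. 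Multiplying by $H(4n-t^2)$ and summing over $t\equiv m\pmod M$ gives (\ref{eqn::highermomentexplicitformula}):
\[
H_{k,m,M}(n) = \mathcal{C}_k n^{k/2} H_{m,M}(n) + \sum_{\mu=0}^{\lfloor (k-1)/2\rfloor} \frac{k-2\mu+1}{k-\mu+1}\binom{k}{\mu}\, n^\mu\, T_{k-2\mu,m,M}(n),
\]
where $T_{\ell,m,M}(n) \coloneqq \sum_{t\equiv m\pmod M} p_\ell(t,n)\, H(4n-t^2)$ and $\ell = k-2\mu \geq 1$ throughout the remaining sum.

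The second stage (Theorem \ref{thm::coefficientformula}) identifies $T_{\ell,m,M}(n) = a_{\ell,m,M}(n) - \lambda_{\ell,m,M}(n)$ via modular forms and holomorphic projection. I would recognize $\sum_{n\geq 0}T_{\ell,m,M}(n)\,q^n$ as the product of Zagier's mock modular generating series $\mathcal{H}(\tau)$ for $H(D)$ (whose modular completion $\widehat{\mathcal{H}}$ is a harmonic Maass form of weight $3/2$ on $\Gamma_0(4)$) with a Gegenbauer-weighted theta kernel of weight $\ell + 1/2$ supported on $t\equiv m\pmod M$. The product of $\widehat{\mathcal{H}}$ with the theta kernel is a non-holomorphic modular form of weight $\ell + 2$ on a congruence subgroup determined by $M$. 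Applying the holomorphic projection $\holoproj$, the Fourier coefficients of the resulting holomorphic form are $T_{\ell,m,M}(n)$ plus an explicit correction coming from the shadow of $\mathcal{H}$; evaluating the Gaussian-type integrals and parameterizing $4n = t^2 - s^2 = (t+s)(t-s)$ shows this correction to be precisely $\lambda_{\ell,m,M}(n)$, with the $1/2$-weighting at $s=0$ arising from the diagonal term in the projection integral. Decomposing the holomorphic projection into its Eisenstein and cuspidal components and absorbing the Eisenstein Fourier coefficients (themselves explicit divisor sums) into $\lambda_{\ell,m,M}(n)$, one identifies the surviving cuspidal coefficient as $a_{\ell,m,M}(n)$.

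The main obstacle is the explicit execution of the second stage. Three technical points must be handled carefully: (i) decomposing the theta kernel into residue classes modulo $M$ in order to respect the arithmetic progression and to produce both the $t\equiv m$ and $t\equiv -m$ inner sums visible in (\ref{eqn::sumsmallerdivisor}); (ii) computing the holomorphic projection explicitly (following the Mertens--Bringmann-type methodology of \cite{mertens2016eichler,bringmann2019sums}) and matching the output divisor sum to $\lambda_{\ell,m,M}(n)$ with the correct boundary weighting at $s=0$; and (iii) checking convergence of the projection at the minimal relevant weight $\ell + 2 = 3$ (occurring when $k$ is odd and $\mu = (k-1)/2$, so that $\ell = 1$), which may require the regularized version of holomorphic projection. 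Once these points are settled, the combinatorial identity from the first stage assembles the pieces into the stated closed form.
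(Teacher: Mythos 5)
Your proposal is correct and follows essentially the same route as the paper: your identification $T_{\ell,m,M}(n)=a_{\ell,m,M}(n)-\lambda_{\ell,m,M}(n)$ is exactly the paper's computation of the Rankin--Cohen bracket $[\widehat{\mathcal H},\theta_{\ell\bmod 2,m,M}]_{[\ell/2]}|_{U_4}$ via holomorphic projection (Lemmas \ref{thm::harmonicpart} and \ref{thm::nonholomorphicpart}), and your direct expansion of $t^k$ in the basis $p_\ell(t,n)$ is equivalent to the paper's iteration of the recursion (\ref{eqn::recursiveformula2}), your induction via $tp_\ell=p_{\ell+1}+np_{\ell-1}$ being a slightly cleaner way to obtain the coefficient formula of Theorem \ref{thm::coefficientformula}. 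Two cosmetic caveats: for $\ell\geq1$ the projection of the completed bracket is already a cusp form, so there is no Eisenstein part to absorb into $\lambda_{\ell,m,M}$ (that issue arises only at $k=0$, weight $2$, which belongs to Theorem \ref{thm::maintheorem2}), and the weights $\ell+2\geq3$ occurring here need no regularized projection.
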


Therefore, it reduces the problem to the study of the zeroth moment. In our next result, we establish an explicit formula for the zeroth moment of Hurwitz class numbers in any arithmetic progression, using the theory of non-holomorphic modular forms and holomorphic projection. 

To state the results, we have to introduce the following notations. Let $1_N$ denote the trivial Dirichlet character of modulus $N$ throughout. Let $p$ be a prime number. For a non-zero integer $n$, we set $n_p\coloneqq p^{\ord_p(n)}$. For a Dirichlet character $\eta$, we denote by $M_{\eta}$ and $N_{\eta}$ the modulus and the conductor of the Dirichlet character, respectively. Then we denote by $\eta_p$ the $p$-part in the natural prime decomposition of $\eta$. If $p\nmid M_{\eta}$, then we have $\eta_p=1_1$. Furthermore we denote by $G(\eta)$ the Gauss sum associated to the Dirichlet character $\eta$, defined by
$$
G(\eta)=\sum_{x=0}^{M_{\eta}}\eta(x)e^{\frac{2\pi ix}{M_{\eta}}}.
$$
For any primitive Dirichlet character $\eta$, there exists a decomposition $\eta=\hat{\eta}\tilde{\eta}$, where $\hat{\eta}$ is a primitive quadratic character and $\tilde{\eta}$ is a primitive Dirichlet character such that $N_{\tilde{\eta}}=N_{\tilde{\eta}^2}$. Let $\eta^{\star}$ be the Dirichlet character $\tilde{\eta}\chi_{_{N_{\tilde{\eta}}}}$, where we denote by $\chi_D$ the Dirichlet character given by the Kronecker--Jacobi symbol $\legendre{\cdot}{D}$. Finally, for any integers $m\in\ZZ,M\geq1$ and any primitive Dirichlet character $\eta$ modulo $M$, we set
\begin{equation}
\label{eqn::drestriction}
S(\eta,m,M)\coloneqq\left\{d~\Big|~\frac{M^2}{N_{\eta}^2}~\Bigg|\begin{array}{l}\text{(1) $d_p\mid p^2m_p^2$ if $p\neq2\nmid N_{\eta}$, or if $p=2\nmid N_{\eta}$ and $M_2\geq2m_2$;}\\\text{(2) $d_pN_{\eta_p}^2\mid p^2m_p^2$ and $d_p\in\ZZ^2$ if $p\mid N_{\hat{\eta}}$;}\\\text{(3) $4_pd_p=m_p^2$ if $p\mid N_{\tilde{\eta}}$.}\end{array}\right\}.
\end{equation}

Suppose that $k$ is an integer and $\epsilon$ and $\eta$ are Dirichlet characters. We denote by $\sigma_{k,\epsilon,\eta}(n)$ the twisted divisor function, defined as
$$
\sigma_{k,\epsilon,\eta}(n)\coloneqq\sum_{d\mid n}\epsilon(n/d)\eta(d)d^k.
$$ 
For the zeroth moment of Hurwitz class numbers in an arithmetic progression, we prove the following formula, rewriting the zeroth moment in terms of a linear combination of divisor functions and the Fourier coefficient of a holomorphic cusp form.

\begin{theorem}[Theorem \ref{thm::quasimodularformula}]
\label{thm::maintheorem2}
For any integers $M\geq1,m\neq0$, there exists a holomorphic cusp form $g_{m,M}$ of weight $2$ whose $n$-th Fourier coefficient is denoted by $a_{0,m,M}(n)$ such that
$$
H_{m,M}(n)=\frac{2\zeta(2)}{ML(2,1_M)}\sum_{\substack{\eta\text{ primitive}\\N_{\eta}\mid M}}\sum_{\substack{d\geq1\\d\in S(\eta,m,M)}}a(\eta,d)\sigma_{1,\eta,\eta}\left(\frac{n}{d}\right)+a_{0,m,M}(n)-\lambda_{0,m,M}(n),
$$
with
\begin{equation}
\label{eqn::divisorcoefficient}
a(\eta,d)\coloneqq\frac{\epsilon_{N_{\eta_0}}^3\eta\big(\frac{4d}{\gcd(4d,m^2)}\big)\eta^{\star}(N_{\hat{\eta}})\Phi_2(\eta)G(\eta^{\star})}{\tilde{\eta}\big(\frac{m^2}{\gcd(4d,m^2)}\big)\hat{\eta}(D_{d,m})\phi(N_{\tilde{\eta}})G(\eta)}\sqrt{\frac{d}{DN_{\eta}}}\prod_{\substack{p\mid M\\p\nmid N_{\eta}}}\Psi_{d,m}(p)\prod_{\substack{p\mid N_{\hat{\eta}}\\d_pN_{\eta_p}^2=p^2m_p^2}}\frac{1}{1-p},
\end{equation}
where $D$ and $D_{d,m}$ are the square-free parts of $d$ and $m^2/\gcd(4d,m^2)$, respectively, $\Phi_2(\eta)$ is defined by
$$
\Phi_2(\eta)=
\begin{dcases}
(-i)\epsilon_{N_{\eta_0}}^2&\text{if }2\mid N_{\hat{\eta}}\text{ and }\eta_2\neq\chi_8,\\
1+\eta_2\left(1+N_{\eta_2}/4\right)&\text{if }2\mid N_{\tilde{\eta}},\\
1&\text{if }2\nmid N_{\eta}\text{ or }\eta_2=\chi_8.
\end{dcases}
$$
and $\Psi_{d,m}(p)$ is defined by
$$
\Psi_{d,m}(p)=
\begin{dcases}
1+\frac{\delta_{1<d_p<M_p^2}}{p^2}&\text{if }d_p<m_p^2\text{ and }d_p\in\ZZ^2,\\
-1+\frac{1}{p}&\text{if }d_p<m_p^2\text{ and }d_p\in p\ZZ^2,\\
1+\frac{\delta_{1<d_p<M_p^2}}{p^2}-\frac{\delta_{m_p<M_p}}{p^2-p}&\text{if }d_p=m_p^2,\\
-1+\frac{p^2+1}{p^2-p}&\text{if }d_p=pm_p^2,\\
1-\frac{p^2}{p^2-p}&\text{if }d_p=p^2m_p^2.
\end{dcases}
$$
\end{theorem}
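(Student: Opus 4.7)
The plan is to realize $\mathcal{F}_{m,M}(\tau) := \sum_{n \geq 0} H_{m,M}(n) q^n$ (with $q = e^{2\pi i \tau}$) as the holomorphic part of a weight-$2$ non-holomorphic modular form, and then apply holomorphic projection to reduce the identity to a decomposition of a holomorphic weight-$2$ form into its Eisenstein and cuspidal parts. First I would factor
$$\mathcal{F}_{m,M}(4\tau) \,=\, \mathcal{H}(\tau)\,\theta_{m,M}(\tau),$$
where $\mathcal{H}(\tau)=\sum_{D\geq 0}H(D)q^{D}$ is Zagier's weight-$3/2$ mock modular form and $\theta_{m,M}(\tau)=\sum_{t\equiv m\pmod M}q^{t^{2}}$ is a restricted weight-$1/2$ theta series. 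Completing $\mathcal{H}$ by its non-holomorphic Eichler integral $\mathcal{H}^{-}$ produces Zagier's harmonic Maass form $\widehat{\mathcal{H}}=\mathcal{H}+\mathcal{H}^{-}$, so that $\widehat{\mathcal{F}}_{m,M}(\tau):=\widehat{\mathcal{H}}(\tau)\,\theta_{m,M}(\tau)$ transforms as a non-holomorphic modular form of weight $2$ on some $\Gamma_{0}(N)$ with character depending on $M$.

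Next I would apply the regularized holomorphic projection operator $\holoproj$ to $\widehat{\mathcal{F}}_{m,M}$. The image splits as $\mathcal{F}_{m,M}+\holoproj(\mathcal{H}^{-}\theta_{m,M})$. Expanding $\mathcal{H}^{-}$ via its standard incomplete-gamma representation and inserting it into the Sturm--Katok integral defining $\holoproj$, the cross term between $q^{t^{2}}$ in $\theta_{m,M}$ and the incomplete-gamma factor indexed by $s$ produces pairs with $t^{2}-s^{2}=4n$; carrying out the $y$-integral recovers the rational factor $\tfrac{t-s}{2}$, and the symmetry $s\mapsto -s$ of the integrand accounts for the star weighting at $s=0$ in \eqref{eqn::sumsmallerdivisor}. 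The outcome is
$$H_{m,M}(n)+\lambda_{0,m,M}(n) \,=\, [q^{n}]\holoproj\bigl(\widehat{\mathcal{F}}_{m,M}\bigr).$$

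The right-hand side is a holomorphic modular form of weight $2$, which decomposes uniquely as a cusp form plus an Eisenstein series; the cusp form is $g_{m,M}$, contributing $a_{0,m,M}(n)$. To identify the Eisenstein contribution, I would use the standard basis of weight-$2$ Eisenstein series $E_{2,\epsilon,\eta}(d\tau)$ indexed by pairs of primitive Dirichlet characters $(\epsilon,\eta)$ and divisors $d$ with $dN_{\epsilon}N_{\eta}\mid N$, whose $n$-th Fourier coefficient is the twisted divisor sum $\sigma_{1,\epsilon,\eta}(n/d)$. Detecting the congruence $t\equiv m\pmod M$ in $\theta_{m,M}$ by orthogonality of Dirichlet characters expresses $\theta_{m,M}$ as a linear combination of twisted theta series of conductor dividing $M$, which, after multiplication with $\widehat{\mathcal{H}}$ and holomorphic projection, pairs off with a specific linear combination of the $E_{2,\epsilon,\eta}(d\tau)$. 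Matching Fourier expansions then produces the coefficients $a(\eta,d)$.

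The principal technical obstacle will be the explicit evaluation of $a(\eta,d)$ in the stated form. Writing the primitive character $\eta=\hat{\eta}\tilde{\eta}$ via the specified decomposition into a primitive quadratic character times a primitive character with $N_{\tilde{\eta}}=N_{\tilde{\eta}^{2}}$, the local factor at each prime $p\mid M$ depends on whether $p\mid N_{\hat{\eta}}$, $p\mid N_{\tilde{\eta}}$, or $p\nmid N_{\eta}$, and further on the $p$-adic relationship between $d_{p}$, $m_{p}$, and $M_{p}$. This multi-case analysis of local Gauss sums is what produces the admissibility set $S(\eta,m,M)$ of \eqref{eqn::drestriction} together with the local factors $\Psi_{d,m}(p)$ and $\Phi_{2}(\eta)$; the prime $p=2$ is especially delicate, since the transformation of half-integral weight theta series of even index and the exceptional behaviour of $\chi_{8}$ force the three-case definition of $\Phi_{2}(\eta)$. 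Once this local bookkeeping is completed, comparing Fourier coefficients term-by-term yields the stated identity.
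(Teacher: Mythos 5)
Your overall architecture is the same as the paper's: form the product of Zagier's $\widehat{\mathcal{H}}$ with the restricted theta series, apply $U_4$ to extract $H_{m,M}(n)$ (note that your literal claim $\mathcal{F}_{m,M}(4\tau)=\mathcal{H}(\tau)\theta_{m,M}(\tau)$ is false as stated, since the product has coefficients at exponents not divisible by $4$; the correct statement is $\mathcal{F}_{m,M}=(\mathcal{H}\theta_{0,m,M})|_{U_4}$, which is what the paper uses), holomorphically project, and identify $\lambda_{0,m,M}$ as the contribution of $\mathcal{H}^{-}\theta_{0,m,M}$ via the incomplete-gamma integral. That part is correct and matches Lemmas \ref{thm::harmonicpart} and \ref{thm::nonholomorphicpart}. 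However, there are two genuine gaps in the remaining (and hardest) step. First, $\holoproj(\widehat{\mathcal{H}}_{m,M})$ is \emph{not} a holomorphic modular form of weight $2$; it is only quasi-modular, because holomorphic projection in weight $2$ picks up a multiple of $E_{2,1_1,1_1}$, which does not transform modularly. Your claimed ``unique decomposition into a cusp form plus an Eisenstein series'' therefore does not exist inside the space of modular forms; the paper must work in the larger space of quasi-modular Eisenstein series and handles this by subtracting a suitable multiple of the harmonic $\widehat{E}_2$ (Proposition \ref{thm::quasimodularformulageneral}), verifying afterwards that the spurious $\alpha E_{2,1_1,1_1}$ term cancels out of the final basis expansion.

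Second, ``matching Fourier expansions'' at $\infty$ cannot produce the coefficients $a(\eta,d)$, because the unknown cusp form $g_{m,M}$ contaminates every Fourier coefficient; you have given no mechanism that separates the Eisenstein component from the cuspidal one. The paper's mechanism is to evaluate the \emph{constant terms at all cusps} of $\Gamma_{4M^2,M}$: the cusp form dies there, and a result of Aygin expresses the coefficients of the basis $E_{2,\epsilon,\eta}(d\tau)$ as explicit character-weighted sums of the cusp values $\nu_{a/(cN_{\eta})}(f)$. Carrying this out requires the transformation behaviour of $\widehat{\mathcal{H}}$ and $\theta_{0,m,M}$ at arbitrary cusps, the evaluation of generalized quadratic Gauss sums $G(a,b,c)$, and a lemma describing how $U_4$ permutes and rescales cusp values (Lemmas \ref{thm::hurwitzgrowth}--\ref{thm::u4growth} and Proposition \ref{thm::cuspvalue}); this is also where the dependence of $\Psi_{d,m}(p)$ on $M_p$ (through $\delta_{1<d_p<M_p^2}$ and $\delta_{m_p<M_p}$) comes from, which a character decomposition of $\theta_{0,m,M}$ alone would not see. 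Your instinct that local Gauss sums drive the case analysis is right, but without the cusp-value machinery (or an equivalent device such as the Eichler--Selberg trace formula, which you do not invoke) the proof cannot be completed as outlined.
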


As an immediate consequence, we can classify the pairs $(m,M)$ with the property $g_{m,M}=0$ in Theorem \ref{thm::maintheorem2}. Therefore, Kronecker-Hurwitz class numbers relations without involving cusp forms like (\ref{eqn::simplekroneckerhurwitz}) only exist for finitely many pairs $(m,M)$.

\begin{corollary}[Corollary \ref{thm::zerocuspidalpart}]
Suppose that $m,M$ are integers such that $1\leq m\leq M$. Then in Theorem \ref{thm::maintheorem2} we have $g_{m,M}=0$ if and only if $1\leq M\leq 5$ or $(m,M)=(2,8),(6,8)$.
\end{corollary}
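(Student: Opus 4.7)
The plan is as follows. By the construction that produced Theorem \ref{thm::maintheorem2}, the cusp form $g_{m,M}$ lies in an explicit finite-dimensional space of weight-$2$ cusp forms on $\Gamma_1(N)$ for some $N$ determined by $M$ (arising from the level $4M^2$ at which the Hurwitz generating function is a mock modular form of weight $\tfrac32$, then moved into weight $2$ via holomorphic projection). My first step is to pin down this space, together with any character constraint, sharply enough to enable a Sturm-bound argument.

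For the forward direction, the cases $1 \leq M \leq 5$ are handled by a dimension count: the cuspidal subspace carved out by the level and character of $g_{m,M}$ has dimension $0$, which follows from standard dimension formulas for weight-$2$ cusp forms at small level (equivalently, the associated modular curves have genus $0$). The two exceptional pairs $(m,M) \in \{(2,8),(6,8)\}$ require a genuine verification, since the ambient space is nontrivial. For these I would compute the Fourier coefficients
\[
a_{0,m,M}(n) = H_{m,M}(n) - \frac{2\zeta(2)}{ML(2,1_M)}\sum_{\substack{\eta \text{ primitive}\\ N_\eta \mid M}}\sum_{\substack{d\geq 1\\ d\in S(\eta,m,M)}} a(\eta,d)\sigma_{1,\eta,\eta}(n/d) + \lambda_{0,m,M}(n)
\]
for every $n$ up to the Sturm bound, evaluating $H_{m,M}(n)$ by direct enumeration (the sum is finite since $H(4n - t^2)$ vanishes once $|t| > 2\sqrt{n}$) and the divisor sum from (\ref{eqn::divisorcoefficient}), and check that these all vanish.

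The reverse direction requires exhibiting, for every $(m,M)$ with $1 \leq m \leq M$ outside the listed set, a Fourier coefficient $a_{0,m,M}(n) \neq 0$. A natural strategy is to take $n$ small enough that $H_{m,M}(n)$ and $\lambda_{0,m,M}(n)$ are supported on very few values of $t$; for instance $H_{m,M}(1)$ takes one of the values $\tfrac12$, $\tfrac13$, $-\tfrac1{12}$, or $0$ depending on which of $t \in \{0, \pm 1, \pm 2\}$ satisfies $t \equiv m \pmod{M}$, and $\lambda_{0,m,M}(1)$ is likewise explicit. The main obstacle is that the divisor-sum term ranges over all primitive characters $\eta$ of conductor dividing $M$, so proving uniform non-cancellation requires a careful case analysis in terms of the prime factorization of $M$ (parity, divisibility by $3$, prime-power structure). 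I would split according to whether $M$ is squarefree or a prime power and by the residue of $m$ modulo small primes, reducing the check to a finite list of residual cases that can be settled by direct computation; the hard part is organizing this case analysis so that no further exceptions appear beyond $(2,8)$ and $(6,8)$.
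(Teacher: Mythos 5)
Your forward direction already has a factual problem: the spaces of weight-$2$ cusp forms on $\Gamma_{4M^2,M}$ are \emph{not} all zero for $1\leq M\leq 5$ (for instance $X_0(36)$ has genus $1$ and $X_0(100)$ has genus $7$), so a dimension count only disposes of the very smallest levels; the cases $M=3,4,5$ need the same explicit verification you reserve for $(2,8)$ and $(6,8)$. This is repairable --- the paper simply delegates all finitely many small cases to a computer check --- but the claim that the relevant modular curves have genus $0$ throughout $1\leq M\leq5$ is false as stated.

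The genuine gap is in the reverse direction. You must prove $g_{m,M}\neq 0$ for \emph{infinitely many} pairs $(m,M)$, and your plan --- evaluate $a_{0,m,M}(1)$ and run a case analysis on the prime factorization of $M$ to rule out cancellation in the character sum $\sum_{\eta} a(\eta,1)$ --- does not reduce to a finite list of residual cases: the set of primitive characters of conductor dividing $M$ grows with $M$, and you offer no mechanism that controls the sum uniformly. The paper supplies exactly the two missing inputs. First, the Eisenstein contribution $\frac{2\zeta(2)}{ML(2,1_M)}\sum_{\eta\in S(m,M)}a(\eta,1)$ to the coefficient at $n=1$ is \emph{positive}: this is not proved by analyzing the characters at all, but is read off from Corollary \ref{thm::zerothmomentgrowth}, since along $n\equiv 1\pmod{M}$ one has $H_{m,M}(n)\gg_M n$ while the cuspidal and $\lambda$ contributions are $O(n^{1/2+\epsilon})$, forcing the coefficient multiplying $\sigma_1(n)$ to be positive. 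Second, the trivial estimate $\big|\sum_{\eta\in S(m,M)}a(\eta,1)\big|\ll_\epsilon M^{\epsilon}$ shows this Eisenstein contribution is $O(M^{\epsilon-1})$, uniformly in $m$. Since $\widehat H_{m,M}(1)=H_{m,M}(1)+\lambda_{0,m,M}(1)$ takes only the values $0,\tfrac13,\tfrac5{12},\tfrac12$, the coefficient $a_{0,m,M}(1)=\widehat H_{m,M}(1)-(\text{small positive quantity})$ is nonzero for all sufficiently large $M$ uniformly in $m$ (it is strictly negative when $\widehat H_{m,M}(1)=0$ and close to a fixed positive rational otherwise); only finitely many $(m,M)$ remain, and those are checked by computer. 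Without the positivity-from-growth argument and the uniform decay estimate, your case analysis has no reason to terminate, and you cannot certify that no exceptions occur beyond $(2,8)$ and $(6,8)$.
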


The paper is organized as follows. In Section \ref{sec:HurwitzHighermoments}, we recall the basic properties of Rankin-Cohen brackets, holomorphic projections, and the class number generating function. As a result, we prove Theorem \ref{thm::maintheorem1} relating higher moments to the zeroth moment. In Section \ref{sec:holprojection}, we prove a general result for calculating the Eisenstein part of the holomorphic projection. Then it is used to prove a formula for the zeroth moment of the Hurwitz class numbers in Section \ref{sec:zerothmomentsHurwitz}. Finally, in Section \ref{sec:EllipticBias}, we study the bias given in Theorem \ref{thm:biasintro} and Theorem \ref{thm:biasintror=2} for elliptic curves over finite fields whose traces of Frobenius lie in any arithmetic progressions.

\section{Formulae for moments of Hurwitz class numbers}\label{sec:Hurwitz}

\subsection{Higher moments of Hurwitz class numbers}\label{sec:HurwitzHighermoments}

In this section, we rewrite the moments of Hurwitz class numbers in terms of the Fourier coefficients of Rankin-Cohen brackets, from which we obtain a recursive formula relates higher moments of Hurwitz class numbers to the zeroth moments. 

Set $\HH\coloneqq\{\tau\in\CC~|~\text{Im}(\tau)>0\}$. For any complex number $\tau\in\HH$, we write $\tau=u+iv$ for $u,v\in\RR$ and we set $q\coloneqq e^{2\pi i\tau}$ throughout. For smooth functions $F_1,F_2\colon\HH\to\CC$ and two half-integers $k_1,k_2\in\frac{1}{2}\ZZ$, the $\kappa$-th Rankin-Cohen bracket for an integer $\kappa\geq0$ is defined as
$$
[F_1,F_2]_{\kappa}\coloneqq\frac{1}{(2\pi i)^{\kappa}}\sum_{\mu=0}^{\kappa}(-1)^{\mu}\binom{\kappa+k_1-1}{\kappa-\mu}\binom{\kappa+k_2-1}{\mu}\frac{\partial^{\mu}F_1}{\partial\tau}\frac{\partial^{\kappa-\mu}F_2}{\partial\tau}.
$$
If the function $F_i$ satisfies the modularity of weight $k_i$ on a congruence subgroup $\Gamma_i$ for any $i=1,2$, then \cite[Theorem 7.1(a)]{cohen1975sums} implies that the Rankin-Cohen bracket $[F_1,F_2]_{\kappa}$ satisfies of weight $2\kappa+k_1+k_2$ on the intersection $\Gamma_1\cap\Gamma_2$. We are going to compute the Rankin-Cohen brackets of the class number generating function with unary theta functions.

The class number generating function $\mathcal{H}\colon\HH\to\CC$ is denoted by
$$
\mathcal{H}(\tau)\coloneqq\sum_{n=0}^{\infty}H(n)q^n.
$$
Setting
$$
\mathcal{H}^{+}(\tau)\coloneqq\mathcal{H}(\tau),~\mathcal{H^{-}}(\tau)\coloneqq\frac{1}{8\pi\sqrt{v}}+\frac{1}{4\sqrt{\pi}}\sum_{n=1}^{\infty}n\Gamma\left(-\frac{1}{2}, 4\pi n^2v\right)q^{-n^2},
$$
Zagier \cite[Th\'eor\`em 2]{zagier1974nombres} showed that the completion $\widehat{\mathcal{H}}(\tau)$ given by
$$
\widehat{\mathcal{H}}(\tau)=\mathcal{H}^{+}(\tau)+\mathcal{H^{-}}(\tau)
$$
is a harmonic Maass form of weight $\frac{3}{2}$ on the congruence subgroup $\Gamma_0(4)$.

Fix integers $\ell\geq0,M\geq1,m\in\ZZ$. The unary theta function $\theta_{\ell,m,M}\colon\HH\to\CC$ is defined as
\begin{equation}
\label{eqn::unarythetaseries}
\theta_{\ell,m,M}(\tau)\coloneqq\sum_{n\equiv m\pmod{M}}n^{\ell}q^{n^2}.
\end{equation}
For $\ell=0,1$, Shimura \cite[Proposition 2.1]{shimura1973modular} showed that $\theta_{\ell,m,M}$ is a modular form of weight $\frac{1}{2}+\ell$ on the congruence subgroup $\Gamma_{4M^2,M}$, where we set $\Gamma_{N,M}\coloneqq\Gamma_0(N)\cap\Gamma_1(M)$ for any integers $N,M\geq1$. 

By \cite[Theorem 7.1(b)]{cohen1975sums}, for $k\in\NN_0$ with $k\equiv \ell\pmod{2}$ we can compute the Fourier expansion of $[\mathcal{H},\theta_{\ell,m,M}]_{[\frac{k}{2}]}|_{U_4}$, where $[x]$ is the greatest integer $\leq x$ throughout, and the action of the operator $U_d$ on a modular form $f$ is given by
$$
f(\tau)=\sum a(n)q^n\longmapsto f|_{U_d}(\tau)=\sum a(nd)q^n.
$$
Following a formal calculation involving the moments of Hurwitz class numbers and the $n$-th Fourier coefficient $H^{\star}_{k,m,M}(n)$ of $[\mathcal{H},\theta_{\ell,m,M}]_{[\frac{k}{2}]}|_{U_4}$, we obtain the recursive formula
\begin{equation}
\label{eqn::recursiveformula1}
H^{\star}_{k,m,M}(n)=\binom{k}{[k/2]}\sum_{\mu=0}^{[k/2]}(-1)^{\mu}\binom{k-\mu}{\mu}n^{\mu}H_{k-2\mu,m,M}(n).
\end{equation}

Using an idea of Mertens \cite{mertens2016eichler}, we can compute the Fourier coefficient $H^{\star}_{k,m,M}(n)$ in another way. Let $\holoproj$ denote the holomorphic projection defined in \cite{gross1986heegner}. Since $\holoproj$ is linear and preserves holomorphic functions, we see that
\begin{equation}
\label{eqn::completedbracket}
[\mathcal{H},\theta_{\ell,m,M}]_{[\frac{k}{2}]}|_{U_4}=\holoproj\left([\widehat{\mathcal{H}},\theta_{\ell,m,M}]_{[\frac{k}{2}]}|_{U_4}\right)-\holoproj\left([\mathcal{H}^{-},\theta_{\ell,m,M}]_{[\frac{k}{2}]}|_{U_4}\right).
\end{equation}

For the first piece on the right-hand side of (\ref{eqn::completedbracket}), we have the following lemma.

\begin{lemma}
\label{thm::harmonicpart}
For any integer $k\geq0,M\geq1,m\in\ZZ$ and $\ell\in\{0,1\}$ with $k\equiv \ell\pmod{2}$, the holomorphic projection $\holoproj([\widehat{\mathcal{H}},\theta_{\ell,m,M}]_{[\frac{k}{2}]}|_{U_4})$ is a holomorphic cusp form of weight $k+2$ on the congruence subgroup $\Gamma_{4M^2,M}$ if $k>0$, and is a quasi-modular form of weight $2$ on the congruence subgroup $\Gamma_{4M^2,M}$ if $k=0$.
\end{lemma}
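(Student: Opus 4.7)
The plan is to combine the Rankin-Cohen bracket construction with holomorphic projection. First I would note that although \cite[Theorem 7.1(a)]{cohen1975sums} is stated for holomorphic inputs, its proof only uses the modular transformation law of its arguments (the non-modular corrections to successive $\tau$-derivatives cancel out in the bracket combination regardless of holomorphy). Since $\widehat{\mathcal{H}}$ transforms with weight $\tfrac{3}{2}$ on $\Gamma_0(4)$ by Zagier and $\theta_{\ell,m,M}$ transforms with weight $\tfrac{1}{2}+\ell$ on $\Gamma_{4M^2,M}$ by Shimura, this shows that $[\widehat{\mathcal{H}},\theta_{\ell,m,M}]_{[k/2]}$ is a non-holomorphic modular form of weight $2[k/2]+\tfrac{3}{2}+\tfrac{1}{2}+\ell=k+2$ (using $k\equiv\ell\pmod 2$) on $\Gamma_0(4)\cap\Gamma_{4M^2,M}=\Gamma_{4M^2,M}$. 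Applying $U_4$ preserves weight and level (since $4\mid 4M^2$), so we obtain a non-holomorphic modular form of weight $k+2$ on $\Gamma_{4M^2,M}$.

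Next I would verify the moderate-growth hypothesis needed to apply the Gross-Zagier holomorphic projection \cite{gross1986heegner}. The non-holomorphic part $\mathcal{H}^{-}$ contributes only exponentially decaying Fourier terms $\Gamma(-\tfrac{1}{2},4\pi n^2 v)q^{-n^2}$ for $n\neq 0$, plus a harmless $\tfrac{1}{8\pi\sqrt{v}}$ boundary piece; paired with the rapidly decaying theta series, the full Rankin-Cohen bracket is of at most polynomial growth at every cusp by modular covariance. For $k>0$ the target weight $k+2\geq 3$ lies in the convergent range, so $\holoproj$ produces a holomorphic modular form of weight $k+2$ on $\Gamma_{4M^2,M}$. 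Cuspidality at $\infty$ follows by inspecting the $n=0$ Fourier coefficient of $\holoproj$: the non-holomorphic contribution at index $0$ has $v$-dependence of the form $Cv^{-1/2}$ (times bounded factors), which is annihilated by the Gross-Zagier integral in weight $\geq 3$; vanishing at the other cusps follows from modular covariance together with the vanishing or positive-index expansion of $\theta_{\ell,m,M}$ there.

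For $k=0$ the output weight is $2$ and the naive Gross-Zagier integral diverges, so I would instead use the regularized holomorphic projection developed by Mertens \cite{mertens2016eichler}. This regularization isolates the divergent boundary contribution as a prescribed multiple of $E_2$, producing an output that differs from a holomorphic modular form by a scalar multiple of $E_2$ -- precisely a quasi-modular form of weight $2$ on $\Gamma_{4M^2,M}$. The main obstacle is this weight-$2$ regularization: one must carefully track that the only divergent contribution to the naive Petersson integral comes from the coupling of the $v^{-1/2}$ tail of $\mathcal{H}^{-}$ to the $\theta_{0,m,M}|_{U_4}$ factor, and that this divergence couples in exactly the shape dictated by an $E_2$-coefficient, so that the regularized projection is genuinely quasi-modular rather than exhibiting more exotic non-modular behavior.
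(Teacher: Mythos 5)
Your proposal is correct and follows essentially the same route as the paper, which for this lemma gives no argument of its own but simply defers to \cite{mertens2016eichler}, \cite{ono2021distribution}, and \cite{bringmann2021odd}: those references establish exactly the chain you describe (covariance of Rankin--Cohen brackets for real-analytic modular forms, verification of the growth conditions at the cusps, convergent holomorphic projection with cuspidality in weight $k+2\geq 3$, and Mertens' regularized projection producing the $E_2$, i.e.\ quasi-modular, correction in weight $2$). The only point you leave implicit is that the half-integral-weight multiplier systems of $\widehat{\mathcal{H}}$ and $\theta_{\ell,m,M}$ combine to an integral-weight character trivial on $\Gamma_{4M^2,M}$, which is standard.
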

\begin{proof}
See \cite[Theorem 1.2]{mertens2016eichler}, \cite[Proposition 4.6]{ono2021distribution}, and \cite[Lemma 3.1]{bringmann2021odd}.
\end{proof}

The second piece on the right-hand side of (\ref{eqn::completedbracket}) is determined in the next lemma.

\begin{lemma}
\label{thm::nonholomorphicpart}
For any integer $k\geq0,m\in\ZZ,M\geq1$ and $\ell\in\{0,1\}$ with $k\equiv \ell\pmod{2}$, we have
$$
\holoproj([\mathcal{H}^{-},\theta_{\ell,m,M}]_{[\frac{k}{2}]}|_{U_4})=\binom{k}{[k/2]}\Lambda_{k,m,M}(\tau),
$$
where for any integers $k\geq0,M\geq1,m\in\ZZ$, we have
$$
\Lambda_{k,m,M}(\tau)\coloneqq\sum_{n\geq1}\lambda_{k,m,M}(n)q^n,
$$
with $\lambda_{k,m,M}(n)$ defined in (\ref{eqn::sumsmallerdivisor}).
\end{lemma}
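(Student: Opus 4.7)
The plan is to carry out the holomorphic projection directly, following the blueprint of Mertens \cite{mertens2016eichler} and the companion computations referenced in the proof of Lemma \ref{thm::harmonicpart}, adapted to the theta series $\theta_{\ell,m,M}$ in the arithmetic progression $n\equiv m\pmod M$. Substituting the Fourier expansions of $\mathcal{H}^-$ and $\theta_{\ell,m,M}$ into the Rankin--Cohen bracket, each derivative $\partial^\mu/\partial\tau$ is applied term-by-term: on the theta side it simply multiplies the $n_2$-term by $(2\pi i n_2^2)^\mu$, while on the $\mathcal{H}^-$ side the chain rule $\partial v/\partial \tau=1/(2i)$ together with
\[
\frac{d}{dv}\Gamma\!\left(-\tfrac{1}{2},4\pi n_1^2 v\right)=-(4\pi n_1^2)^{-1/2}\,v^{-3/2}\,e^{-4\pi n_1^2 v}
\]
produces explicit $v^{-j-1/2}$-terms multiplying $q^{-n_1^2}e^{-4\pi n_1^2 v}$, alongside lower-order incomplete-gamma remainders. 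After applying $|_{U_4}$ and collecting the coefficient of $q^n$, one is left with a double sum over pairs $(n_1,n_2)\in\ZZ_{\geq 0}\times\ZZ$ satisfying $n_2\equiv m\pmod M$ and $n_2^2-n_1^2=4n$, with the $n_1=0$ summand coming from the constant $\frac{1}{8\pi\sqrt v}$ of $\mathcal{H}^-$.

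These coefficients are fed into the standard holomorphic projection formula
\[
(\holoproj f)_n=\frac{(4\pi n)^{k+1}}{k!}\int_0^\infty c_n(v)\,v^{k}\,e^{-4\pi n v}\,dv
\]
for a weight $k+2$ cusp form, reducing the problem to integrals of the two shapes
\[
\int_0^\infty v^{k-j-1/2}\,e^{-4\pi(n+n_1^2)v}\,dv\quad\text{and}\quad \int_0^\infty \Gamma\!\left(-\tfrac{1}{2},4\pi n_1^2 v\right)v^{k-j}\,e^{-4\pi n v}\,dv.
\]
The first is a Gamma integral; the second reduces to the first either by the recurrence $\Gamma(a+1,x)=a\Gamma(a,x)+x^a e^{-x}$ applied once to eliminate the incomplete gamma, or by swapping the order of integration in $\Gamma(-\tfrac12,x)=\int_x^\infty t^{-3/2}e^{-t}dt$. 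Using $n_2^2-n_1^2=4n$ to simplify the rational factors of $n$, $n_1$ and $n+n_1^2=n_2^2/4$ that emerge, each integral produces a polynomial in $n_1,n_2$ of total degree $k+1$ times the prefactor $\binom{k}{[k/2]}$ from the Rankin--Cohen binomial coefficients evaluated at the half-integer weights $k_1=3/2$, $k_2=1/2+\ell$.

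The main obstacle is the combinatorial collapse of the alternating sum
\[
\sum_{\mu=0}^{[k/2]}(-1)^\mu\binom{[k/2]+1/2}{[k/2]-\mu}\binom{[k/2]+\ell-1/2}{\mu}
\]
weighted by the polynomial pieces produced by the integral evaluation into the single monomial $\binom{k}{[k/2]}((n_2-n_1)/2)^{k+1}$. I plan to carry this out by the change of variables $a=(n_2-n_1)/2$, $b=(n_2+n_1)/2$, under which $ab=n$ and the summand becomes visibly a binomial expansion of $(n_2-n_1)^{k+1}/2^{k+1}=a^{k+1}$, mirroring the combinatorial identities in \cite{mertens2016eichler,bringmann2019sums}. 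The $n_1=0$ contribution, coming solely from the constant $1/(8\pi\sqrt v)$ of $\mathcal{H}^-$, accounts for the $s=0$ summand in (\ref{eqn::sumsmallerdivisor}) with weight $\tfrac12$, since $n_1=0$ is not doubled under the $n_1\leftrightarrow -n_1$ symmetrisation that generates the $s>0$ terms. Finally, the second sum over $t\equiv -m\pmod M$ with sign $(-1)^k$ in (\ref{eqn::sumsmallerdivisor}) is produced by the substitution $n_2\mapsto -n_2$ in $\theta_{\ell,m,M}$, which simultaneously flips the residue class $m\mapsto -m$ and introduces a factor of $(-1)^\ell=(-1)^k$ from the $n_2^\ell$ weight; combining both contributions completes the identification with $\binom{k}{[k/2]}\Lambda_{k,m,M}$.
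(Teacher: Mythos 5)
Your proposal is correct and is essentially the paper's own proof unpacked: the paper proves this lemma by citation (Mertens' Theorem~1.2 for $k$ even, with the leading-constant correction noted in \cite{kane2020distribution}, and \cite{bringmann2021odd} for $k$ odd), and those cited arguments are precisely the direct holomorphic-projection computation you describe -- term-by-term differentiation in the Rankin--Cohen bracket, evaluation of the resulting Gamma integrals, and the Chu--Vandermonde-type collapse of the alternating binomial sum into $\binom{k}{[k/2]}\big(\tfrac{t-s}{2}\big)^{k+1}$. Your bookkeeping of the $n_2<0$ terms (producing the $t\equiv-m\pmod{M}$ sum with sign $(-1)^{\ell}=(-1)^k$) and of the $n_1=0$ constant term of $\mathcal{H}^-$ (producing the weight $\tfrac12$ at $s=0$) is also the correct mechanism.
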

\begin{proof}
For any even integer $k$, it was essentially shown in \cite[Theorem 1.2]{mertens2016eichler} and a correction of the leading factor was given in \cite[Remark of Lemma 2.7]{kane2020distribution}. For any odd integer $k$, it was calculated in \cite[Lemma 3.2]{bringmann2021odd}.
\end{proof}

Plugging Lemma \ref{thm::harmonicpart}, Lemma \ref{thm::nonholomorphicpart}, and (\ref{eqn::completedbracket}) into (\ref{eqn::recursiveformula1}), we see that for any integer $k\geq1$, we have
\begin{equation}
\label{eqn::recursiveformula2}
H_{k,m,M}(n)=a_{k,m,M}(n)-\lambda_{k,m,M}(n)-\sum_{\mu=1}^{[k/2]}(-1)^{\mu}\binom{k-\mu}{\mu}n^{\mu}H_{k-2\mu,m,M}(n),
\end{equation}
where we denote by $a_{k,m,M}(n)$ the $n$-th Fourier coefficient of 
$$
\binom{k}{[k/2]}^{-1}\holoproj([\widehat{\mathcal{H}},\theta_{\ell,m,M}]_{[\frac{k}{2}]}|_{U_4}),
$$
for any integer $k\geq1$. Next we discuss the case when $k=0$. Clearly the formula (\ref{eqn::recursiveformula2}) holds when $k=0$. However, if we set
\begin{equation}
\label{eqn::k=0notation}
\widehat{\mathcal{H}}_{m,M}\coloneqq[\widehat{\mathcal{H}},\theta_{0,m,M}]_{0}|_{U_4}=(\widehat{\mathcal{H}}\theta_{0,m,M})|_{U_4},
\end{equation}
then by Lemma \ref{thm::harmonicpart}, the holomorphic projection $\widehat{\mathcal{H}}_{m,M}$ is not necessarily a holomorphic cusp form. To avoid confusion, we denote by $\widehat{H}_{m,M}(n)$ its $n$-th Fourier coefficient and the notation $a_{0,m,M}(n)$ is reserved for the $n$-th Fourier coefficient of its cuspidal part $g_{m,M}$ given in Theorem \ref{thm::maintheorem2}. Therefore, if $k=0$, then we have
\begin{equation}
\label{eqn::k=0}
H_{m,M}(n)=\widehat{H}_{m,M}(n)-\lambda_{k,m,M}(n).
\end{equation}

Iterating the recursive formula (\ref{eqn::recursiveformula2}), we see that there exist integers $T(k,\mu)$ for any integer $k\geq1$ and any integer $0\leq\mu\leq[\frac{k-1}{2}]$ such that
\begin{equation}
\label{eqn::highermomentexplicitformula}
H_{k,m,M}(n)=\mathcal{C}_kn^{\frac{k}{2}}H_{m,M}(n)+\sum_{\mu=0}^{[(k-1)/2]}T(k,\mu)\big(a_{k-2\mu,m,M}(n)-\lambda_{k-2\mu,m,M}(n)\big)n^{\mu},
\end{equation}
where $\mathcal{C}_k$ is defined in (\ref{eqn::extendedcatalan}). Comparing (\ref{eqn::recursiveformula2}) with (\ref{eqn::highermomentexplicitformula}), we see that $T(k,\mu)$ satisfies the following recursive relation:
\begin{equation}
\label{eqn::tkmu}
T(k,0)=1,~T(k,\mu)=-\sum_{m=1}^{\mu}(-1)^m\binom{k-m}{m}T(k-2m,\mu-m),
\end{equation}
for any integer $k\geq1$ and any integer $1\leq\mu\leq[\frac{k-1}{2}]$. In fact, there is a simple formula for the integers $T(k,\mu)$.

\begin{theorem}
\label{thm::coefficientformula}
For any integer $k\geq1$ and any integer $0\leq\mu\leq[\frac{k-1}{2}]$, we have
$$
T(k,\mu)=\frac{k-2\mu+1}{k-\mu+1}\binom{k}{\mu}.
$$
\end{theorem}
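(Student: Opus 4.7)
The plan is to proceed by induction on $\mu$, reducing the inductive step to a classical finite-difference identity. The base case $\mu = 0$ is immediate: the recurrence (\ref{eqn::tkmu}) gives $T(k,0)=1$, which matches $\frac{k+1}{k+1}\binom{k}{0}=1$. For the inductive step, fix $\mu \geq 1$ and $k \geq 2\mu+1$ (the range forced by $\mu \leq [(k-1)/2]$), and assume the formula has been verified for all pairs $(k',\mu')$ with $\mu' < \mu$. Substituting the inductive hypothesis into the right-hand side of (\ref{eqn::tkmu}) and using the identity $(k-2m)-2(\mu-m)+1 = k-2\mu+1$ to pull a common factor out of the sum, I obtain
\[
T(k,\mu) = (k-2\mu+1)\sum_{m=1}^{\mu}\frac{(-1)^{m+1}}{k-\mu-m+1}\binom{k-m}{m}\binom{k-2m}{\mu-m}.
\]
Since $k-2\mu+1 \geq 2 > 0$ throughout the admissible range, I may divide by it.

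The theorem therefore reduces to showing
\[
\sum_{m=0}^{\mu}\frac{(-1)^m}{k-\mu-m+1}\binom{k-m}{m}\binom{k-2m}{\mu-m} = 0 \qquad (\mu \geq 1),
\]
where the $m=0$ term accounts for the desired left-hand side $\frac{1}{k-\mu+1}\binom{k}{\mu}$. A direct factorial rewriting gives
\[
\frac{1}{k-\mu-m+1}\binom{k-m}{m}\binom{k-2m}{\mu-m} = \frac{(k-m)!}{m!(\mu-m)!(k-m-\mu+1)!} = \frac{1}{\mu}\binom{\mu}{m}\binom{k-m}{\mu-1},
\]
so the target identity simplifies further to
\[
\sum_{m=0}^{\mu}(-1)^m\binom{\mu}{m}\binom{k-m}{\mu-1} = 0.
\]

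Viewing $f(x) := \binom{x}{\mu-1}$ as a polynomial of degree $\mu-1$ in $x$, the displayed sum is, up to a sign, the $\mu$-th forward finite difference $\Delta^{\mu} f$ evaluated at $x=k-\mu$. Since $\Delta^{\mu}$ annihilates every polynomial of degree strictly less than $\mu$, the sum vanishes, completing the induction. There is no serious obstacle; the only care needed is the factorial simplification above, and the Catalan-triangle shape of the closed form is exactly what one expects from a recurrence whose coefficients are the alternating binomials $(-1)^m\binom{k-m}{m}$, by analogy with the generating-function identity $C(x)^{\ell+1} = \sum_{\mu \geq 0}\frac{\ell+1}{\ell+\mu+1}\binom{\ell+2\mu}{\mu}x^{\mu}$.
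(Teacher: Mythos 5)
Your proof is correct and follows essentially the same route as the paper: induction on $\mu$, substitution of the inductive hypothesis into the recurrence (\ref{eqn::tkmu}), cancellation of the common factor $k-2\mu+1$, and reduction to the identity $\sum_{m=0}^{\mu}(-1)^m\binom{\mu}{m}\frac{(k-m)!}{(k-\mu-m+1)!}=0$. The only difference is how that identity is dispatched: you observe that the sum is the $\mu$-th finite difference of the degree-$(\mu-1)$ polynomial $x\mapsto\binom{x}{\mu-1}$ and hence vanishes, whereas the paper reads off the coefficient of $x^k$ ($k\geq 2\mu+1$) in $(1-x)^{\mu}\sum_{k\geq\mu}\frac{k!}{(k-\mu+1)!}x^k=(\mu-1)!\,x^{\mu-1}\bigl(1-(1-x)^{\mu}\bigr)$, a polynomial of degree $2\mu-1$ --- two equivalent mechanisms, with yours arguably the more transparent.
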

\begin{proof}
First, it is easy to prove the formula for $\mu=0$. Next, we prove the formula by induction on $\mu$. By the inductive hypothesis, it is equivalent to proving that
\begin{align*}
\frac{k-2\mu+1}{k-\mu+1}\binom{k}{\mu}=&-\sum_{m=1}^{\mu}(-1)^m\frac{k-2\mu+1}{k-\mu-m+1}\binom{k-m}{m}\binom{k-2m}{\mu-m}\\
=&-\sum_{m=1}^{\mu}(-1)^m(k-2\mu+1)\frac{(k-m)!}{m!(\mu-m)!(k-\mu-m+1)!}.
\end{align*}
for any integer $1\leq\mu\leq[\frac{k-1}{2}]$. Multiplying both sides by $\frac{\mu!}{k-2\mu+1}$ and moving the sum to the left-hand side, it is equivalent to showing that
\begin{equation}
\label{eqn::identity}
\sum_{m=0}^{\mu}(-1)^m\binom{\mu}{m}\frac{(k-m)!}{(k-\mu-m+1)!}=0,
\end{equation}
for any integer $\mu\geq1$ and any integer $k\geq2\mu+1$. Let $x$ be a real number such that $|x|<1$. Taking the $(\mu-1)$-th derivatives of both sides in the following equation
$$
\sum_{k=\mu}^{\infty}x^k=\frac{1}{1-x}-\sum_{k=0}^{\mu-1}x^k,
$$
we see that
$$
\sum_{k=\mu}^{\infty}\frac{k!}{(k-\mu+1)!}x^{k-\mu+1}=\frac{(\mu-1)!\left(1-(1-x)^{\mu}\right)}{(1-x)^{\mu}}.
$$
It follows that
$$
f(x)\coloneqq\sum_{k=\mu}^{\infty}\frac{k!}{(k-\mu+1)!}x^k=\frac{(\mu-1)!x^{\mu-1}\left(1-(1-x)^{\mu}\right)}{(1-x)^{\mu}}.
$$
Therefore, we have
$$
(\mu-1)!x^{\mu-1}\left(1-(1-x)^{\mu}\right)=(1-x)^{\mu}f(x)=\sum_{k=\mu}^{\infty}\sum_{m=0}^{\min(\mu,k-\mu)}(-1)^m\binom{\mu}{m}\frac{(k-m)!}{(k-\mu-m+1)!}x^k.
$$
Since this equality holds for any real number $|x|<1$, we see that (\ref{eqn::identity}) holds for any integer $\mu\geq1$ and any integer $k\geq2\mu+1$.
\end{proof}

\subsection{Holomorphic projection of $\widehat{\mathcal{H}}_{m,M}$}\label{sec:holprojection}

Next we study the Eisenstein part of the holomorphic projections of $\widehat{\mathcal{H}}_{m,M}$, which will be used to obtain an explicit formula for the zeroth moment of Hurwitz class numbers. 

Let $f\colon\HH\to\CC$ be a function satisfying modularity of weight $k\in\frac{1}{2}\ZZ$. Then for any cusp $\rho=\frac{a}{c}\in\QQ\cup\{\infty\}$ such that $\gcd(a,c)=1$, we define the growth of $f$ at the cusp $\rho$ to be the limit
$$
\nu_{\rho}(f)\coloneqq\lim\limits_{v\to\infty}(c\tau+d)^{-k}f\left(\frac{a\tau+b}{c\tau+d}\right),
$$
whenever it exists, where the integers $b,d\in\ZZ$ are chosen so that $\begin{psmallmatrix}a&b\\c&d\end{psmallmatrix}\in\SL_2(\ZZ)$.

We shall use harmonic and quasi-modular Eisenstein series of weight $2$ that are constructed as follows. For any integer $k\geq0$, let $B_{k,\chi}$ denote the $k$-th generalized Bernoulli number associated to the Dirichlet character $\chi$ defined by the relation
$$
\sum_{k=0}^{\infty}\frac{B_{k,\chi}}{k!}x^k=\sum_{n=1}^{M_{\chi}}\frac{\chi(n)xe^{nx}}{e^{M_{\chi}x}-1}.
$$
For primitive Dirichlet characters $\epsilon$ and $\eta$ such that $\epsilon(-1)\eta(-1)=1$, let $\omega$ denote the primitive Dirichlet character inducing $\epsilon\cdot\overline{\eta}$. The quasi-modular Eisenstein series of weight $2$ associated to the Dirichlet characters $\epsilon$ and $\eta$ is defined by 
\begin{equation}
\label{eqn::quasimodulareisenstein}
E_{2,\epsilon,\eta}(\tau)\coloneqq\epsilon(0)-\frac{4\cdot G(\overline{\eta})N_{\omega}^2}{B_{2,\overline{\omega}}G(\omega)N_{\eta}^2}\prod_{p\mid\lcm(N_{\epsilon},N_{\eta})}\frac{p^2}{p^2-\omega(p)}\sum_{n\geq1}\sigma_{1,\epsilon,\eta}(n)q^n.
\end{equation}

The well-known harmonic Eisenstein series of weight $2$ is given by
\begin{equation}
\label{eqn::E2expansion}
\widehat{E}_2\coloneqq E_{2,1_1,1_1}-\frac{3}{\pi v},
\end{equation}
which satisfies weight $2$ modularity on $\SL_2(\ZZ)$. By (\ref{eqn::quasimodulareisenstein}) and (\ref{eqn::E2expansion}), it is clear that $\nu_{\rho}(\widehat{E}_2)=1$ towards any cusp $\rho$. More generally, fixing integers $N,M\geq1$ such that $M\mid N$, we can apply the construction in \cite[Chapter VII, (25)]{schoeneberg2012elliptic} and the trace operator from $\Gamma(N)$ to $\Gamma_{N,M}$ to construct harmonic Eisenstein series. Specifically, for any cusp $\rho$ of $\Gamma_{N,M}$, there exists a harmonic Eisenstein series $\widetilde{E}_{2,N,M}^{\rho}$ of weight $2$ on the congruence subgroup $\Gamma_{N,M}$ with two properties:
\begin{enumerate}[leftmargin=8mm]
\item The Fourier expansion of $\widetilde{E}_{2,N,M}^{\rho}$ is given by
\begin{equation}
\label{eqn::E2NMexpansion}
\widetilde{E}_{2,N,M}^{\rho}(\tau)=\text{holomorphic part}-\frac{3\alpha}{\pi v},
\end{equation}
for a rational number $\alpha\in\QQ$. The number $\alpha$ can be determined explicitly but we only need the existence in this paper.
\item The harmonic Eisenstein series $\widetilde{E}_{2,N,M}^{\rho}$ is non-vanishing towards the cusp $\rho$ and vanishes towards the other cusps.
\end{enumerate}

Before proving the main result in this section, we introduce the following decomposition relating modular forms on the congruence subgroup $\Gamma_{N,M}$ to modular forms on the congruence subgroup $\Gamma_0(N)$. Let $f\colon\HH\to\CC$ be a function satisfying modularity of weight $k$ on the congruence subgroup $\Gamma_{N,M}$ with integers $N,M\geq1$ such that $M\mid N$. For a Dirichlet character $\psi$ modulo $M$, we set
\begin{equation}
\label{eqn::explicitchodecomposition}
f_{\psi}\coloneqq\frac{1}{\phi(M)}\sum_{d\in(\ZZ/M\ZZ)^{\times}}\overline{\psi}(d)f|_{\gamma_d},
\end{equation}
where $\gamma_d$ is a matrix in $\Gamma_0(N)$ such that its lower-right entry is congruent to $d$ modulo $M$ for each integer $d\in(\ZZ/M\ZZ)^{\times}$.

\begin{lemma}
\label{thm::chodecomposition}
Fix integers $N,M\geq1$ such that $M\mid N$. We have a direct sum decomposition
$$
\mathcal{E}_2(\Gamma_{N,M})=\bigoplus_{\psi}\mathcal{E}_2(\Gamma_0(N),\psi),
$$
where the sum runs over the group of Dirichlet characters modulo $M$. Moreover, the splitting for a holomorphic Eisenstein series $E\in\mathcal{E}_2(\Gamma_{N,M})$ is given by 
$$
E=\sum_{\psi}E_{\psi}
$$
with $E_{\psi}\in\mathcal{E}_2(\Gamma_0(N),\psi)$ for each Dirichlet character $\psi$ modulo $M$.
\end{lemma}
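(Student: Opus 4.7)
The plan is to exploit the fact that $\Gamma_{N,M}$ is normal in $\Gamma_0(N)$, with quotient group isomorphic to $(\ZZ/M\ZZ)^{\times}$ via the ``lower-right entry mod $M$'' map, and to extract the character components by the standard orthogonality averaging prescribed in (\ref{eqn::explicitchodecomposition}).

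First, I would verify the multiplicativity that makes the character-average well defined. For any $\gamma=\begin{psmallmatrix}a&b\\c&d\end{psmallmatrix},\gamma'=\begin{psmallmatrix}a'&b'\\c'&d'\end{psmallmatrix}\in\Gamma_0(N)$, the lower-right entry of $\gamma\gamma'$ equals $cb'+dd'$, which is congruent to $dd'\pmod{M}$ since $M\mid N\mid c$. Thus the map $\Gamma_0(N)\to(\ZZ/M\ZZ)^{\times}$ sending a matrix to its lower-right entry mod $M$ is a surjective group homomorphism with kernel $\Gamma_{N,M}$, so the $\gamma_d$ appearing in (\ref{eqn::explicitchodecomposition}) form a complete set of coset representatives, and the definition of $f_{\psi}$ is independent of the choices (any other choice differs by an element of $\Gamma_{N,M}$, under which $f$ is invariant).

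Next, for an arbitrary $\gamma_e\in\Gamma_0(N)$ with lower-right entry $\equiv e\pmod{M}$, the computation above shows $\gamma_d\gamma_e\equiv\gamma_{de}\pmod{\Gamma_{N,M}}$, so $f|_{\gamma_d\gamma_e}=f|_{\gamma_{de}}$, and reindexing gives $f_{\psi}|_{\gamma_e}=\psi(e)f_{\psi}$. This shows $f_{\psi}\in M_2(\Gamma_0(N),\psi)$; orthogonality of characters in the form $\frac{1}{\phi(M)}\sum_{\psi}\overline{\psi}(d)=\delta_{d\equiv 1}$ (taking $\gamma_1$ to be the identity) then gives $f=\sum_{\psi}f_{\psi}$, while applying the averaging operator $f\mapsto f_{\psi'}$ to a hypothetical relation $\sum_{\psi}g_{\psi}=0$ with $g_{\psi}$ of character $\psi$ yields $g_{\psi'}=0$, establishing uniqueness of the decomposition.

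The remaining point, which I expect to be the most delicate part of the write-up, is that the character-averaging preserves the Eisenstein subspace (as opposed to merely the full space of modular forms). The key observation is that since $\Gamma_{N,M}$ is normal in $\Gamma_0(N)$, the slash action of any $\gamma_d\in\Gamma_0(N)$ permutes the cusps of $\Gamma_{N,M}$ and sends modular forms on $\Gamma_{N,M}$ to modular forms on $\Gamma_{N,M}$, preserving both the cuspidal and Eisenstein subspaces. Consequently, for $f\in\mathcal{E}_2(\Gamma_{N,M})$ each $f_{\psi}$ lies in $\mathcal{E}_2(\Gamma_0(N),\psi)$, giving $\mathcal{E}_2(\Gamma_{N,M})=\sum_{\psi}\mathcal{E}_2(\Gamma_0(N),\psi)$, while the reverse inclusion is immediate since any $\psi$-eigenform for $\Gamma_0(N)$ is in particular invariant under $\Gamma_{N,M}$. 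Combined with the uniqueness established above, this yields the direct sum decomposition, and the ``splitting'' statement is then just (\ref{eqn::explicitchodecomposition}) applied to $E$.
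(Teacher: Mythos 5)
Your argument is correct, but it is a genuinely different route from the paper's: the paper proves this lemma by a one-line citation to Cho's Theorem 2.5, whereas you give a self-contained proof from the normality of $\Gamma_{N,M}$ in $\Gamma_0(N)$, the isomorphism $\Gamma_0(N)/\Gamma_{N,M}\cong(\ZZ/M\ZZ)^{\times}$ via the lower-right entry, and character orthogonality applied to the averaging operator \eqref{eqn::explicitchodecomposition}. Your verification of well-definedness, the eigenvector property $f_{\psi}|_{\gamma_e}=\psi(e)f_{\psi}$, the reconstruction $f=\sum_{\psi}f_{\psi}$, and the uniqueness of the decomposition are all sound. The one step you rightly flag as delicate --- that the averaging preserves the Eisenstein subspace rather than merely $M_2(\Gamma_{N,M})$ --- is asserted from normality but could use one more sentence: the cleanest justification is that $\mathcal{E}_2(\Gamma_{N,M})$ is the orthogonal complement of $S_2(\Gamma_{N,M})$ with respect to the Petersson pairing (which is defined between Eisenstein series and cusp forms in weight $2$), and slashing by $\gamma_d\in\Gamma_0(N)$ is an isometry of $M_2(\Gamma_{N,M})$ preserving $S_2(\Gamma_{N,M})$, hence also its complement; alternatively, the slash action permutes the cusps of $\Gamma_{N,M}$ and therefore permutes a basis of Eisenstein series indexed by them. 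What your approach buys is transparency and independence from the reference (and it makes visible exactly where $M\mid N$ is used, namely in making the lower-right-entry map a homomorphism killing $\Gamma_{N,M}$); what the paper's citation buys is brevity and the explicit description of the splitting in Cho's normalization, which the authors reuse later.
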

\begin{proof}
See \cite[Theorem 2.5]{cho2018number}.
\end{proof}

We shall use the following proposition to obtain an explicit formula for the zeroth moment of Hurwitz class numbers, which can be considered as a generalization of \cite[Theorem 1]{aygin2022projections} to quasi-modular Eisenstein series.

\begin{proposition}
\label{thm::quasimodularformulageneral}
Set $f=\widehat{\mathcal{H}}_{m,M}$ defined in (\ref{eqn::k=0notation}) for integers $m\in\ZZ,M\geq1$. Suppose that $f=\sum_{\psi}f_{\psi}$ be the splitting in Lemma \ref{thm::chodecomposition}. Then there exists a quasi-modular Eisenstein series $E$ of weight $2$ and a holomorphic cusp form of weight $2$ on the congruence subgroup $\Gamma_{4M^2,M}$ such that
$$
\holoproj(f)=E+g.
$$
Moreover, we have
\begin{equation}
\label{eqn::eisensteinformula}
E(\tau)=\sum_{\psi}\sum_{\substack{\epsilon,\eta\text{ primitive}\\\epsilon\eta=\psi}}\sum_{d\mid 4M^2/N_{\epsilon}N_{\eta}}a_{f_{\psi}}(\epsilon,\eta,d)E_{2,\epsilon,\eta}(d\tau),
\end{equation}
where the coefficient $a_f(\epsilon,\eta,d)$ is given by
$$
a_f(\epsilon,\eta,d)\coloneqq\prod_{p\mid2M}\frac{p^2}{p^2-\epsilon(p)\overline{\eta}(p)}\sum_{c\mid 4M^2/N_{\epsilon}N_{\eta}}\mathcal{R}_{\epsilon,\eta}(d,c)\mathcal{S}_{4M^2/N_{\epsilon}N_{\eta},\epsilon,\eta}(d,c)\sum_{a\in(\ZZ/cN_{\eta}\ZZ)^{\times}}\frac{\eta(a)\nu_{\frac{a}{cN_{\eta}}}(f)}{\phi(cN_{\eta})},
$$
with
$$
\mathcal{R}_{\epsilon,\eta}(d,c)=\epsilon\left(\frac{-d}{\gcd(c,d)}\right)\overline{\eta}\left(\frac{c}{\gcd(c,d)}\right)\frac{\gcd(c,d)^2}{c^2},
$$
and
$$
\mathcal{S}_{N,\epsilon,\eta}(d,c)=\mu\left(\frac{cd}{\gcd(c,d)^2}\right)\prod_{\substack{p\mid\gcd(c,d)\\\ord_p(c)=\ord_p(d)\\\ord_p(d)<\ord_p(N)}}\left(\frac{p^2+\epsilon(p)\overline{\eta}(p)}{p^2}\right).
$$
\end{proposition}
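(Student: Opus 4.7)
The plan is to decompose $f=\widehat{\mathcal{H}}_{m,M}$ by nebentypus using Lemma~\ref{thm::chodecomposition} and then identify the Eisenstein coefficients by evaluating at cusps, adapting Aygin's approach \cite{aygin2022projections} to the quasi-modular weight-$2$ setting.

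Writing $f=\sum_\psi f_\psi$ with $f_\psi$ transforming under $\Gamma_0(4M^2)$ with nebentypus $\psi$, and using that $\holoproj$ commutes with the slash action, $\holoproj(f_\psi)$ is a quasi-modular form of weight $2$ which decomposes uniquely as $E_\psi+g_\psi$ with $g_\psi$ cuspidal and $E_\psi$ in the span of $\{E_{2,\epsilon,\eta}(d\tau):\epsilon\eta=\psi,\;d\mid 4M^2/(N_\epsilon N_\eta)\}$, including the quasi-modular $E_{2,1_1,1_1}(d\tau)$ when $\psi$ is trivial. Summing over $\psi$ yields $\holoproj(f)=E+g$, and only the coefficients $a_{f_\psi}(\epsilon,\eta,d)$ in (\ref{eqn::eisensteinformula}) remain to be pinned down.

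To determine these, I would exploit the injectivity of the cusp-value map on the Eisenstein subspace: the relevant cusps are $a/(cN_\eta)$ for $c\mid 4M^2/N_\eta$ and $a\in(\ZZ/cN_\eta\ZZ)^\times$, and a direct slash-action computation shows that $\nu_{a/(cN_\eta)}(E_{2,\epsilon',\eta'}(d\tau))$ vanishes unless $(\epsilon',\eta')=(\epsilon,\eta)$, in which case it equals, up to the prefactor in (\ref{eqn::quasimodulareisenstein}), the combinatorial weight $\mathcal{R}_{\epsilon,\eta}(d,c)$ times $\eta(a)$. Forming the $\eta$-isotypic average $\phi(cN_\eta)^{-1}\sum_{a}\eta(a)\nu_{a/(cN_\eta)}(\holoproj(f_\psi))$ extracts a triangular-in-$d$ linear combination of the $a_{f_\psi}(\epsilon,\eta,d)$, and inverting this system by Möbius produces the factor $\mathcal{S}_{4M^2/(N_\epsilon N_\eta),\epsilon,\eta}(d,c)$ and yields the stated closed form, provided one can replace $\nu_\rho(\holoproj(f_\psi))$ by $\nu_\rho(f_\psi)$.

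The principal obstacle is the simultaneous treatment of the quasi-modular summands $E_{2,1_1,1_1}(d\tau)$ and of the non-holomorphic tail from $\mathcal{H}^-$: neither $E_{2,1_1,1_1}$ nor $\widehat{\mathcal{H}}_{m,M}$ is strictly modular, so the cusp evaluation $\nu_\rho$ must be performed through the harmonic completions $\widehat{E}_2=E_{2,1_1,1_1}-3/(\pi v)$ and the generalized harmonic Eisenstein series $\widetilde{E}_{2,4M^2,M}^{\rho}$ of (\ref{eqn::E2NMexpansion}), and the non-holomorphic $1/v$-contributions on each side must be shown to cancel uniformly across all cusps. Verifying this cancellation case-by-case for $\psi$ trivial versus nontrivial is the delicate step; once it is in place, the identity $\nu_\rho(\holoproj(f_\psi))=\nu_\rho(f_\psi)$ follows and the rest of the proof becomes a mechanical accounting of Gauss-sum normalizations, the factor $\prod_{p\mid 2M}p^2/(p^2-\epsilon(p)\overline{\eta}(p))$, and the Möbius inversion above, which together assemble the coefficients into the claimed form.
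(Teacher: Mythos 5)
Your overall strategy---identify the Eisenstein part of $\holoproj(f)$ by matching growth at cusps and then extract the coefficients via an $\eta$-isotypic average over cusps followed by M\"obius inversion---is the same strategy the paper uses, except that the paper outsources the cusp-value-to-coefficient inversion entirely to \cite[Theorem 1]{aygin2022projections} rather than re-deriving the factors $\mathcal{R}_{\epsilon,\eta}$ and $\mathcal{S}_{N,\epsilon,\eta}$ from scratch. The substantive divergence, and the gap, is in \emph{where} the cusp values are taken. You propose to evaluate $\nu_{\rho}(\holoproj(f_{\psi}))$ and to prove $\nu_{\rho}(\holoproj(f_{\psi}))=\nu_{\rho}(f_{\psi})$ by a case-by-case cancellation of $1/v$-tails; but you leave that cancellation unproved, and it is the entire content of the proposition. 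Worse, the quantity $\nu_{\rho}(\holoproj(f_{\psi}))$ is not well defined by the limit defining $\nu_\rho$, since $\holoproj(f_{\psi})$ is only quasi-modular (this is exactly the weight-$2$ obstruction you flag), and the claim that $\holoproj$ commutes with $|_{\gamma_d}$ in weight $2$ --- needed to even write $\holoproj(f_\psi)=(\holoproj f)_\psi$ --- is precisely what fails and produces the quasi-modular defect. So the ``delicate step'' you defer is not a routine verification: as stated, the objects it concerns are not yet defined.

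The paper's proof avoids evaluating $\holoproj(f)$ at cusps altogether, and this is the idea your proposal is missing. One first builds, from the harmonic Eisenstein series $\widetilde{E}_{2,4M^2,M}^{\rho}$ of (\ref{eqn::E2NMexpansion}) supported at single cusps, an auxiliary harmonic form $\widetilde{E}=\sum_{\rho}\nu_{\rho}(f)\,\widetilde{E}_{2,4M^2,M}^{\rho}/\nu_{\rho}(\widetilde{E}_{2,4M^2,M}^{\rho})$ whose cusp values agree with those of $f$ (these are honest limits because $f$ and $\widetilde{E}$ are genuinely modular, albeit non-holomorphic). Then $f-\widetilde{E}$ decays at every cusp, so $\holoproj(f-\widetilde{E})$ is cuspidal by \cite[Corollary 3.2]{bringmann2023formulas}; this isolates $g$ without ever discussing cusp values of a quasi-modular form. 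Finally, $E_0:=\widetilde{E}-\alpha\widehat{E}_2$ is a \emph{holomorphic} modular Eisenstein series, to which Lemma \ref{thm::chodecomposition} and Aygin's theorem apply verbatim with the growth data $\nu_{\rho}(E_{0,\psi})=\nu_{\rho}(f_{\psi})-\alpha\delta_{\psi=1_M}$ from (\ref{eqn::explicitchodecomposition}); a short multiplicativity computation shows the $\alpha$-correction contributes exactly $-\alpha E_{2,1_1,1_1}$, which is cancelled upon adding back $\alpha\holoproj(\widehat{E}_2)=\alpha E_{2,1_1,1_1}$, yielding (\ref{eqn::eisensteinformula}). To repair your write-up you would need to either adopt this detour through $\widetilde{E}$ and $\widehat{E}_2$, or supply an independent, rigorous definition of cusp values for quasi-modular forms together with a proof that holomorphic projection preserves them---which amounts to the same cancellation you postponed.
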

\begin{proof}
Let $C(M)$ denote a complete set of representatives of cusps of $\Gamma_{4M^2,M}$. Set
$$
\widetilde{E}\coloneqq\sum_{\rho\in C(M)}\frac{\nu_{\rho}(f)}{\nu_{\rho}\big(\widetilde{E}_{2,4M^2,M}^{\rho}\big)}\widetilde{E}_{2,4M^2,M}^{\rho}.
$$
Then it is clear that the function $f-\widetilde{E}$ vanishes towards any cusp $\rho$. Therefore, applying the holomorphic projection, we see that
$$
g\coloneqq\holoproj(f-\widetilde{E})=\holoproj(f)-\holoproj(\widetilde{E})
$$
is a holomorphic cusp form of weight $2$ on the congruence subgroup $\Gamma_{4M^2,M}$ by \cite[Corollary 3.2]{bringmann2023formulas}. It remains to show that $E\coloneqq\holoproj(\widetilde{E})$ is a quasi-modular Eisenstein series satisfying (\ref{eqn::eisensteinformula}). By (\ref{eqn::E2expansion}) and (\ref{eqn::E2NMexpansion}), there exists a constant $\alpha\in\QQ$ such that $E_0\coloneqq\widetilde{E}-\alpha\widehat{E}_2$ is a holomorphic Eisenstein series satisfying the modularity of weight $2$ on the congruence subgroup $\Gamma_{4M^2,M}$. Applying Lemma \ref{thm::chodecomposition}, we have
$$
E_0=\sum_{\psi}E_{0,\psi}.
$$
Moreover, by (\ref{eqn::explicitchodecomposition}), we have
\begin{equation}
\label{eqn::E0growth}
\nu_{\rho}(E_{0,\psi})=\nu_{\rho}(f_{\psi})-\alpha\delta_{\psi=1_M},
\end{equation}
for any cusp $\rho$ and any Dirichlet character $\psi$ modulo $M$. Applying \cite[Theorem 1]{aygin2022projections} with (\ref{eqn::E0growth}), we see that
$$
E_0(\tau)=\sum_{\psi}\sum_{\substack{\epsilon,\eta\text{ primitive}\\\epsilon\eta=\psi}}\sum_{d\mid4M^2/N_{\epsilon}N_{\eta}}a_{f_{\psi}}(\epsilon,\eta,d)E_{2,\epsilon,\eta}(d\tau)-\sum_{\eta\text{ primitive}}\sum_{d\mid4M^2/N_{\eta}^2}a_{\alpha}(\overline{\eta},\eta,d)E_{2,\overline{\eta},\eta}(d\tau).
$$
If $\eta\neq1_1$, then it follows from a straightforward calculation that $a_{\alpha}(\overline{\eta},\eta,d)=0$. If $\eta=1_1$, we have
\begin{align*}
a_{\alpha}(1_1,1_1,d)=&~\alpha\prod_{p\mid2M}\left(\frac{p^2}{p^2-1}\right)\sum_{c\mid4M^2}\mathcal{R}_{1_1,1_1}(d,c)\mathcal{S}_{4M^2,1_1,1_1}(d,c)\\
=&~\alpha\prod_{p\mid2M}\left(\frac{p^2}{p^2-1}\right)\sum_{c\mid4M^2}\frac{\gcd(c,d)^2}{c^2}\mu\left(\frac{cd}{\gcd(c,d)^2}\right)\prod_{\substack{p\mid\gcd(c,d)\\\ord_p(c)=\ord_p(d)\\\ord_p(d)<\ord_p(4M^2)}}\left(\frac{p^2+1}{p^2}\right).
\end{align*}
Using multiplicativity to evaluate the sum, we see that $a_{\alpha}(1_1,1_1,d)=\delta_{d=1}\alpha$. Therefore, we have
$$
E_0(\tau)=\sum_{\psi}\sum_{\substack{\epsilon,\eta\text{ primitive}\\\epsilon\eta=\psi}}\sum_{d\mid4M^2/N_{\epsilon}N_{\eta}}a_{f_{\psi}}(\epsilon,\eta,d)E_{2,\epsilon,\eta}(d\tau)-\alpha E_{2,1_1,1_1}(\tau).
$$
Hence, we have
$$
E=\holoproj(E_0)+\alpha\holoproj(\widehat{E}_2)=E_0+\alpha E_2=\sum_{\psi}\sum_{\substack{\epsilon,\eta\text{ primitive}\\\epsilon\eta=\psi}}\sum_{d\mid N/N_{\epsilon}N_{\eta}}a_{f_{\psi}}(\epsilon,\eta,d)E_{2,\epsilon,\eta}(d\tau),
$$
as desired.
\end{proof}

\subsection{The zeroth moment of Hurwitz class numbers}\label{sec:zerothmomentsHurwitz}

Recall that in (\ref{eqn::k=0}), we have
\begin{equation}
\label{eqn::momentquasimodular}
H_{m,M}(n)=\widehat{H}_{m,M}(n)-\lambda_{0,m,M}(n),
\end{equation}
where $\widehat{H}_{m,M}(n)$ is the $n$-th Fourier coefficient of $\holoproj(\widehat{\mathcal{H}}_{m,M})$, and $\lambda_{0,m,M}(n)$ is defined in (\ref{eqn::sumsmallerdivisor}). In this section, we apply Proposition \ref{thm::quasimodularformulageneral} to prove an explicit formula for the zeroth moment of Hurwitz class numbers. This requires us to compute the growth of certain modular forms towards cusps, which is done in the subsequent lemmas.

\begin{lemma}
\label{thm::hurwitzgrowth}
For any integers $a,c\in\ZZ$ such that $c\geq0$ and $\gcd(a,c)=1$, we have
$$
\nu_{\frac{a}{c}}(\widehat{\mathcal{H}})=
\begin{dcases}
-\frac{1}{12}\legendre{c}{a}\varepsilon_a&\text{if }c\equiv0\pmod{4},\\
0&\text{if }c\equiv2\pmod{4},\\
-\frac{1+i}{96}\legendre{a}{c}\varepsilon_c^3&\text{if }c\equiv\pm1\pmod{4},
\end{dcases}
$$
where for an odd integer $d$, we set 
$$
\epsilon_d\coloneqq
\begin{dcases}
1&\text{if }d\equiv1\pmod{4},\\
i&\text{if }d\equiv3\pmod{4}.
\end{dcases}
$$
\end{lemma}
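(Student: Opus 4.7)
The plan is to reduce the limit to the constant Fourier coefficient of $\widehat{\mathcal{H}}$ at the cusp $\infty$, and then use the weight-$\frac{3}{2}$ modularity of $\widehat{\mathcal{H}}$ on $\Gamma_0(4)$. The three cusp classes of $\Gamma_0(4)$ are represented by $\infty$, $0$, and $1/2$ and correspond precisely to $c\equiv 0$, $c\equiv \pm 1$, and $c\equiv 2\pmod 4$, so the three cases are treated separately. A key preliminary is that $\lim_{v\to\infty}\widehat{\mathcal{H}}(\tau)=H(0)=-\frac{1}{12}$, because $\frac{1}{8\pi\sqrt{v}}\to 0$ and each term $n\Gamma(-\tfrac{1}{2},4\pi n^2 v)q^{-n^2}$ in $\mathcal{H}^-$ decays exponentially in $v$, while the holomorphic part contributes only its constant coefficient $H(0)$. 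In the case $4\mid c$ the matrix $\gamma=\begin{psmallmatrix}a & b\\ c & d\end{psmallmatrix}$ lies in $\Gamma_0(4)$, so Zagier's transformation law with the theta-cubed multiplier $\chi(\gamma)=\legendre{c}{d}\epsilon_d^{-3}$ yields
\begin{equation*}
(c\tau+d)^{-3/2}\widehat{\mathcal{H}}(\gamma\tau)=\legendre{c}{d}\epsilon_d^{-3}\widehat{\mathcal{H}}(\tau)\xrightarrow{v\to\infty}-\frac{1}{12}\legendre{c}{d}\epsilon_d^{-3},
\end{equation*}
which rewrites to the claimed $-\frac{1}{12}\legendre{c}{a}\epsilon_a$ after using $\epsilon_d^{-3}=\epsilon_d$ (as $\epsilon_d^4=1$) and Kronecker--Jacobi reciprocity applied to $ad\equiv 1\pmod c$.

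For $c$ odd, I would factor $\gamma=\gamma_0\sigma$, where $\sigma$ is a fixed $\SL_2(\ZZ)$-representative sending $\infty$ to the $\Gamma_0(4)$-cusp $0$ and $\gamma_0\in\Gamma_0(4)$. The growth at $0$ is then extracted from the Fourier expansion of $\widehat{\mathcal{H}}|_{3/2}W_4$ for the Atkin--Lehner involution $W_4=\begin{psmallmatrix}0 & -1\\ 4 & 0\end{psmallmatrix}$; equivalently, Poisson summation applied to the holomorphic part reduces the constant term to the quadratic Gauss sum $\sum_{n\pmod c}e^{2\pi ian^2/c}=\sqrt{c}\,\epsilon_c\legendre{a}{c}$ (valid for odd positive $c$). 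Combining this Gauss-sum value with $-\frac{1}{12}$ from $H(0)$, the $W_4$-scaling factor $4^{-3/2}=1/8$, and the weight-$\frac{3}{2}$ phase coming from the theta-cubed multiplier gives exactly $-\frac{1+i}{96}\legendre{a}{c}\epsilon_c^3$.

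The case $c\equiv 2\pmod 4$ proceeds via the same mechanism, but the corresponding Gauss sum vanishes by the standard evaluation at moduli of exact $2$-adic valuation $1$, so $\nu_{a/c}(\widehat{\mathcal{H}})=0$; equivalently, $\widehat{\mathcal{H}}$ vanishes at the $\Gamma_0(4)$-cusp $1/2$. The hard part will be the odd-$c$ case: although the factor $\epsilon_c^3$, the constant $(1+i)/96$, and the Kronecker symbol $\legendre{a}{c}$ are each individually natural, reconciling the multiplier conventions for weight $\frac{3}{2}$, the direction in which $W_4$ is applied, and the dependence of the Gauss sum on $c\pmod 8$ (which is what promotes $\epsilon_c$ to $\epsilon_c^3$ after combining with the weight-$\frac{3}{2}$ multiplier) requires careful bookkeeping. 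The $4\mid c$ case is then the cleanest, and the $c\equiv 2\pmod 4$ case becomes a formal consequence of the odd case once one notes the vanishing of the Gauss sum.
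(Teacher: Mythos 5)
The paper itself does not prove this lemma --- it is imported wholesale from \cite[Proposition 6.7]{bringmann2017harmonic} --- so you are attempting strictly more than the source does. Your overall architecture is the right one: the limit defining $\nu_{a/c}$ picks out the holomorphic constant term of $\widehat{\mathcal{H}}|_{3/2}\gamma$; the three residues of $c$ modulo $4$ match the three cusps $\infty,0,\tfrac12$ of $\Gamma_0(4)$; and $\lim_{v\to\infty}\widehat{\mathcal{H}}(\tau)=H(0)=-\tfrac{1}{12}$ because $|n\Gamma(-\tfrac12,4\pi n^2v)q^{-n^2}|\ll e^{-2\pi n^2v}$. The case $4\mid c$ is essentially complete: the reduction $\legendre{c}{d}\varepsilon_d^{-3}=\legendre{c}{a}\varepsilon_a$ does follow from $\varepsilon_d^4=1$, from $d\equiv a\pmod 4$ (forced by $ad\equiv1\pmod c$ with $4\mid c$), and from Jacobi reciprocity, as you indicate.

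The gap is in the odd-$c$ case, which is where all the content of the lemma lives. The assertion that ``Poisson summation applied to the holomorphic part reduces the constant term to the quadratic Gauss sum $\sum_{n\bmod c}e^{2\pi ian^2/c}$'' is not a valid step: the holomorphic part is $\sum_{n\geq0}H(n)q^n$, which is not a theta series, and Poisson summation applied to it produces no quadratic exponential and hence no quadratic Gauss sum. Gauss sums do enter the correct computation, but by a different route: either one realizes $\widehat{\mathcal{H}}$ as $-\tfrac1{12}$ times Zagier's weight-$\tfrac32$ Eisenstein series defined by Hecke's trick, so that the constant term at the cusps $0$ and $\tfrac12$ is the value at $s=0$ of a scattering term built from the Gauss sums attached to the $\theta^3$-multiplier, or one invokes the explicit Fricke-involution identity for $\widehat{\mathcal{H}}$ going back to Eichler and Hirzebruch--Zagier. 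Neither is a formal consequence of what you have written, and the constant $-\tfrac{1+i}{96}$ and the exponent in $\varepsilon_c^3$ cannot be confirmed without carrying one of these out; your closing paragraph in effect concedes that this bookkeeping is undone. The $c\equiv2\pmod4$ case inherits the same gap, since you derive it from the same unestablished Gauss-sum description (although the conclusion, vanishing at the cusp $\tfrac12$, is correct). The efficient repair is the one the paper makes --- cite \cite[Proposition 6.7]{bringmann2017harmonic} --- or, for a self-contained argument, compute the constant terms of Zagier's Eisenstein series at the three cusps directly.
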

\begin{proof}
See \cite[Proposition 6.7]{bringmann2017harmonic}.
\end{proof}

\begin{lemma}
\label{thm::thetagrowth}
Fix integers $M\geq1,m\in\ZZ$. For any integers $a,c\in\ZZ$ such that $c\geq0$ and $\gcd(a,c)=1$, we have
$$
\nu_{\frac{a}{c}}(\theta_{0,m,M})=
\begin{dcases}
\delta_{M\mid m}&\text{if }c=0,\\
\frac{(-i)^{\frac{1}{2}}}{M\sqrt{2c}}\sum_{x=0}^{c-1}e^{2\pi i\frac{a(Mx+m)^2}{c}}&\text{if }c\neq0.
\end{dcases}
$$
\end{lemma}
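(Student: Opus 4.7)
The plan is to compute $\nu_{a/c}(\theta_{0,m,M})$ directly from the definition, using Poisson summation after the Möbius substitution $\tau\mapsto(a\tau+b)/(c\tau+d)$. For $c=0$ (i.e., the cusp at $\infty$), we may take $\gamma=\begin{psmallmatrix}1&b\\0&1\end{psmallmatrix}$, and the claim reduces to extracting the constant term of the $q$-expansion of $\theta_{0,m,M}(\tau+b)$: this term comes only from $n=0$, which requires $M\mid m$, producing $\delta_{M\mid m}$. For $c\neq 0$, after replacing $(a,c)$ by $(-a,-c)$ if necessary, we may assume $c>0$.

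First I would expand $\theta_{0,m,M}(\tau')=\sum_{n\equiv m\pmod{M}}e^{2\pi in^2\tau'}$ and split the sum according to the residue of $n$ modulo $cM$, writing $n=cMt+s$ with $s\in\{Mx+m:0\le x<c\}$ and $t\in\ZZ$. Using $ad-bc=1$ to rewrite $\tau'=\tfrac{a}{c}-\tfrac{1}{cT}$ with $T:=c\tau+d$, the exponential separates as $e^{2\pi in^2\tau'}=e^{2\pi ias^2/c}\cdot e^{-2\pi in^2/(cT)}$, since the cross terms in $an^2/c$ contribute integers and hence disappear modulo $1$. Completing the square gives $n^2/c=cM^2(t+s/(cM))^2$, so the inner sum over $t$ becomes a shifted Gaussian-type sum $\sum_{t\in\ZZ}e^{-2\pi icM^2(t+s/(cM))^2/T}$.

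Next I would apply Poisson summation in $t$. A direct Gaussian-integral computation (with the branch pinned down by $\operatorname{Im}(T)=cv>0$) shows that $\widehat{f}(\xi)=\sqrt{T/(2icM^2)}\,e^{\pi iT\xi^2/(2cM^2)}$ for $f(x)=e^{-2\pi icM^2x^2/T}$, so the inner sum converts to $\sqrt{T/(2icM^2)}\sum_{\xi\in\ZZ}e^{\pi iT\xi^2/(2cM^2)}e^{2\pi i\xi s/(cM)}$. As $v\to\infty$ every term with $\xi\neq 0$ decays exponentially because $\operatorname{Im}(T)\to\infty$, so only $\xi=0$ survives. Multiplying by $(c\tau+d)^{-1/2}=T^{-1/2}$ cancels the $T$, leaves $1/(M\sqrt{2ic})=(-i)^{1/2}/(M\sqrt{2c})$, and the remaining sum over $s$ rewrites as $\sum_{x=0}^{c-1}e^{2\pi ia(Mx+m)^2/c}$ after setting $s=Mx+m$ (shifting $s$ by multiples of $cM$ leaves $e^{2\pi ias^2/c}$ unchanged).

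The main technical obstacle is the metaplectic bookkeeping of branches of the square root in the Poisson-summation step: the quotient $\sqrt{T/(2icM^2)}\cdot T^{-1/2}$ must be tracked through the limit to land precisely on the factor $(-i)^{1/2}$ stated in the lemma. Once the branch is fixed consistently using $\operatorname{Im}(T)>0$, the remainder of the argument is straightforward manipulation of exponential sums.
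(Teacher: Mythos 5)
Your proposal is correct, and it is essentially the paper's approach: the paper simply cites Shimura's transformation formula for unary theta series (\cite[Section 2]{shimura1973modular}), whose standard proof is exactly the Poisson-summation computation you carry out, including the splitting $n=cMt+s$ with $s=Mx+m$, the separation of $e^{2\pi ias^2/c}$, and the branch bookkeeping giving $(2ic)^{-1/2}=(-i)^{1/2}/\sqrt{2c}$. Your argument fills in the details behind the citation and lands on the stated constant correctly.
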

\begin{proof}
See \cite[Section 2]{shimura1973modular}.
\end{proof}

Suppose that $a,b,c\in\ZZ$ such that $c\geq1$. Then, the generalized quadratic Gauss sum $G(a,b,c)$ is defined as follows,
$$
G(a,b,c)\coloneqq\sum_{x=0}^{c-1}e^{2\pi i\frac{ax^2+bx}{c}}.
$$
Therefore, to study the growth of unary theta functions at each cusp, we have to evaluate generalized quadratic Gauss sums. Now we give formulae for generalized quadratic Gauss sums.

\begin{proposition}
\label{thm::gausssum}
Let $a,b,c\in\ZZ$ such that $c\geq1$. Then we have the following results:
\begin{enumerate}[leftmargin=8mm]
\item We have
$$
G(a,b,c)=
\begin{dcases}
\gcd(a,c)\cdot G\left(\frac{a}{\gcd(a,c)},\frac{b}{\gcd(a,c)},\frac{c}{\gcd(a,c)}\right)&\text{ if }\gcd(a,c)\mid b,\\
0&\text{ if }\gcd(a,c)\nmid b.
\end{dcases}
$$
\item If $\gcd(a,c)=1$ and $c\equiv1\pmod{2}$, then let $k\in\ZZ$ be an integer satisfying $4ak\equiv1\pmod{c}$. Then we have
$$
G(a,b,c)=e^{-\frac{2\pi ikb^2}{c}}\varepsilon_c\legendre{a}{c}\sqrt{c}.
$$
\item If $\gcd(a,c)=1$ and $c\equiv2\pmod{4}$, we set $c_0\coloneqq\frac{c}{2}$ and take an integer $k\in\ZZ$ such that $8ak\equiv1\pmod{c_0}$. Then we have
$$
G(a,b,c)=
\begin{dcases}
e^{-\frac{2\pi ikb^2}{c_0}}\varepsilon_{c_0}\legendre{2a}{c_0}\sqrt{2c}&\text{ if }b\text{ is odd},\\
0&\text{ if }b\text{ is even}.
\end{dcases}
$$
\item If $\gcd(a,c)=1$ and $c\equiv0\pmod{4}$, we take an integer $k\in\ZZ$ such that $ak\equiv1\pmod{c}$. Then we have
$$
G(a,b,c)=
\begin{dcases}
e^{-\frac{2\pi ikb^2}{4c}}\varepsilon_{a}^3\legendre{c}{a}(1+i)\sqrt{c}&\text{ if }b\text{ even},\\
0&\text{ if }b\text{ odd}.
\end{dcases}
$$
\end{enumerate}
\end{proposition}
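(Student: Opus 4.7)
The plan is to establish each of the four parts by direct manipulation of the defining sum, with the heart of parts (2)--(4) being completion of the square.

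For part (1), set $d = \gcd(a,c)$. When $d \mid b$, write $a = da'$, $b = db'$, $c = dc'$; the summand $e^{2\pi i(a'x^2+b'x)/c'}$ has period $c'$ in $x$, so summing $x$ over $\ZZ/c\ZZ$ gives $d$ copies of $G(a',b',c')$, matching the claim. When $d \nmid b$, the substitution $x \mapsto x + c'$ shifts the exponent modulo $1$ by $2a'x + a'c' + b/d$, which is congruent to $b/d \not\in \ZZ$ modulo $1$; this forces $G(a,b,c) = e^{2\pi i b/d}G(a,b,c)$, so $G(a,b,c) = 0$.

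For parts (2) and (4) I would complete the square and reduce to a pure quadratic Gauss sum $\sum_x e^{2\pi i a x^2/c}$, whose value is classical. In part (2) (odd $c$), the identity $4a(ax^2+bx) = (2ax+b)^2 - b^2$ and the choice $4ak \equiv 1\pmod{c}$ rewrite the exponent modulo $1$ as $k((2ax+b)^2 - b^2)/c$; since $y = 2ax+b$ is a bijection on $\ZZ/c\ZZ$, this gives $G(a,b,c) = e^{-2\pi ikb^2/c}\sum_y e^{2\pi i k y^2/c}$. The classical evaluation $\sum_y e^{2\pi i k y^2/c} = \legendre{k}{c}\varepsilon_c\sqrt{c}$ together with $\legendre{k}{c} = \legendre{a}{c}$ (from $\legendre{4ak}{c}=1$ and $\legendre{4}{c}=1$) delivers the claim. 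In part (4) ($4\mid c$, hence $a$ odd), the shift $x\mapsto x+c/2$ multiplies $G(a,b,c)$ by $(-1)^b$, forcing vanishing when $b$ is odd; when $b = 2b'$ is even, the shift $x \mapsto x - kb'$ with $ak \equiv 1\pmod{c}$ cancels the linear term and leaves $G(a,b,c) = e^{-2\pi ikb^2/(4c)}G(a,0,c)$. The classical evaluation $G(a,0,c) = (1+i)\varepsilon_a^3\legendre{c}{a}\sqrt{c}$ for $4\mid c$ then finishes this case.

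For part (3) ($c = 2c_0$ with $c_0$ odd, so $a$ is forced to be odd), I would use the CRT bijection $x = c_0 y + 2z$ with $y \in \{0,1\}$ and $z \in \ZZ/c_0\ZZ$. Expanding $ax^2+bx$ and dividing by $2c_0$, the terms $2ayz$ and $(2az^2+bz)/c_0$ survive, while the $y$-dependent pieces $a c_0 y^2/2$ and $by/2$ collapse, using that $a$ and $c_0$ are odd, to a sign $(-1)^{y(1+b)}$. Summing over $y \in \{0,1\}$ yields
\[
G(a,b,2c_0) = \bigl(1-(-1)^b\bigr)\, G(2a, b, c_0),
\]
which vanishes for $b$ even and equals $2G(2a,b,c_0)$ for $b$ odd. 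In the latter case, applying part (2) with $8ak \equiv 1 \pmod{c_0}$ and using $2\sqrt{c_0} = \sqrt{2c}$ produces the stated formula.

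The sole obstacle is bookkeeping---tracking the signs $\varepsilon_c$ and $\varepsilon_a$, the Kronecker symbols, and the phase factors introduced by each shift and change of variable. There is no conceptual difficulty, since each case reduces, after completing the square or splitting via CRT, to a classical quadratic Gauss sum evaluation that may alternatively be cited from a standard reference.
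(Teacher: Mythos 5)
Your proposal is correct. Note, however, that the paper does not prove this proposition at all: its ``proof'' is a one-line citation to the literature (Lemma 2.6 of the cited work of Bringmann et al.), so your argument is a genuinely different --- namely self-contained and elementary --- route. Each of your reductions checks out: in (1) the periodicity argument and the shift $x\mapsto x+c'$ correctly give the factorization and the vanishing when $\gcd(a,c)\nmid b$; in (2) the identity $4a(ax^2+bx)=(2ax+b)^2-b^2$ together with $\legendre{k}{c}=\legendre{a}{c}$ is exactly right; in (4) the shift by $c/2$ gives the factor $(-1)^b$ and the shift by $-kb'$ produces the constant $-kb'^2=-kb^2/4$ modulo $c$, matching the stated phase $e^{-2\pi ikb^2/(4c)}$; and in (3) the CRT substitution $x=c_0y+2z$ yields the factor $1+(-1)^{y(ac_0+b)}=1-(-1)^b$ (using that $ac_0$ is odd) and the reduction to $2G(2a,b,c_0)$ with $2\sqrt{c_0}=\sqrt{2c}$. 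What your approach buys is transparency and independence from the reference; what it still presupposes is the classical evaluation of the pure Gauss sums $\sum_x e^{2\pi iax^2/c}$ for $c$ odd and for $4\mid c$ (Gauss's sign determination), which you correctly identify as the only external input --- everything else is the bookkeeping you describe.
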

\begin{proof}
This is well-known, and it may be found in this form in \cite[Lemma 2.6]{bringmann2022conjectures}.
\end{proof}

\begin{lemma}
\label{thm::u4growth}
Suppose that $f$ is a modular form of weight $2$. For any integers $a,c\in\ZZ$ such that $\gcd(a,c)=1$, we have
$$
\nu_{\frac{a}{c}}(f|_{U_4})=\frac{1}{4}\sum_{0\leq j\leq3}g_j^2\cdot\nu_{\frac{(a+cj)/g_j}{4c/g_j}}(f),
$$
where for any $0\leq j\leq3$, we set $g_j\coloneqq\gcd(a+cj,4c)$.
\end{lemma}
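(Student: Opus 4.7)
The plan is to use the standard Fourier-analytic decomposition of the $U_4$ operator,
\begin{equation*}
f|_{U_4}(\tau) = \frac{1}{4}\sum_{j=0}^{3} f\!\left(\frac{\tau+j}{4}\right),
\end{equation*}
which follows from the orthogonality of additive characters modulo $4$. Choosing $\gamma=\begin{psmallmatrix}a&b\\c&d\end{psmallmatrix}\in\SL_2(\ZZ)$ sending $\infty$ to $a/c$ and substituting $\tau\mapsto\gamma\tau$, one rewrites the four terms as $f(\gamma'_j\tau)$ with
\begin{equation*}
\gamma'_j:=\begin{pmatrix}a+cj & b+dj\\ 4c & 4d\end{pmatrix},\qquad\det\gamma'_j=4.
\end{equation*}

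The key step is to factor $\gamma'_j=\gamma_j M_j$, where $\gamma_j\in\SL_2(\ZZ)$ has first column $\bigl((a+cj)/g_j,\,4c/g_j\bigr)^T$. Such a $\gamma_j$ exists because these two entries are coprime by the definition of $g_j=\gcd(a+cj,4c)$, and moreover $\gamma_j$ sends $\infty$ to precisely the cusp $\tfrac{(a+cj)/g_j}{4c/g_j}$ appearing on the right-hand side. A short computation using $\det\gamma_j=1$ and $ad-bc=1$ shows that
\begin{equation*}
M_j=\gamma_j^{-1}\gamma'_j=\begin{pmatrix} g_j & \mu_j \\ 0 & 4/g_j\end{pmatrix}
\end{equation*}
for some integer $\mu_j$; the vanishing of the lower-left entry uses $(a+cj)/g_j\cdot g_j=a+cj$, and the lower-right entry is forced by $\det M_j=4$.

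With this factorization in hand, I would set $\tau'':=M_j\tau=(g_j^2\tau+g_j\mu_j)/4$, which satisfies $\mathrm{Im}(\tau'')\to\infty$ as $v\to\infty$, so that
\begin{equation*}
(c\tau+d)^{-2}f(\gamma'_j\tau) = \left(\frac{\tfrac{4c}{g_j}\tau''+\delta_j}{c\tau+d}\right)^{2}\cdot\left(\tfrac{4c}{g_j}\tau''+\delta_j\right)^{-2}f(\gamma_j\tau''),
\end{equation*}
where $\delta_j$ is the lower-right entry of $\gamma_j$. By the definition of the cusp growth applied to $\gamma_j\in\SL_2(\ZZ)$, the second factor converges to $\nu_{(a+cj)/g_j\,/\,(4c/g_j)}(f)$, while the elementary limit
\begin{equation*}
\frac{\tfrac{4c}{g_j}\tau''+\delta_j}{c\tau+d}=\frac{cg_j\tau+(c\mu_j+\delta_j)}{c\tau+d}\xrightarrow{v\to\infty}g_j
\end{equation*}
contributes the factor $g_j^2$ upon squaring. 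Summing the four resulting limits and dividing by $4$ yields the claimed identity.

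The only genuinely non-routine point is the explicit factorization $\gamma'_j=\gamma_j M_j$ and the identification of the upper triangular shape of $M_j$; the rest is bookkeeping with limits. The degenerate case $c=0$ (where $a=\pm1$, $g_j=1$, and all four cusps collapse to $\infty$) can be checked by hand and reduces to the evident identity $\nu_\infty(f|_{U_4})=\nu_\infty(f)$.
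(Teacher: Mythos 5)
Your proof is correct and follows essentially the same route as the paper: decompose $U_4$ into the four coset representatives $\begin{psmallmatrix}1&j\\0&4\end{psmallmatrix}$, factor each product $\begin{psmallmatrix}1&j\\0&4\end{psmallmatrix}\gamma=\gamma_j\begin{psmallmatrix}g_j&*\\0&4/g_j\end{psmallmatrix}$ with $\gamma_j\in\SL_2(\ZZ)$, and pass to the limit $v\to\infty$. You simply spell out the limit bookkeeping more explicitly than the paper does.
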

\begin{proof}
We can write
$$
f|_{U_4}(\tau)=\sum_{0\leq j\leq3}f|_{\begin{psmallmatrix}1 & j\\0 & 4\end{psmallmatrix}}(\tau).
$$
For each $0\leq j\leq3$, we have
$$
\begin{pmatrix}1 & j\\0 & 4\end{pmatrix}\begin{pmatrix}a & b\\c & d\end{pmatrix}=\gamma_j\begin{pmatrix}g_j & *\\0 & \frac{4}{g_j}\end{pmatrix},
$$
where $\gamma_j$ is a matrix of the form
$$
\gamma_j=
\begin{pmatrix}
\frac{a+cj}{g_j} & *\\
\frac{4c}{g_j} & *
\end{pmatrix}\in\SL_2(\ZZ).
$$
Therefore, we can evaluate the limit as follows,
$$
\lim\limits_{v\to\infty}f|_{U_4}|_{\gamma}(\tau)=\lim\limits_{v\to\infty}\sum_{j\pmod{4}}f|_{\gamma_j}|_{\begin{psmallmatrix}g_j & *\\0 & \frac{4}{g_j}\end{psmallmatrix}}(\tau)=\frac{1}{4}\sum_{j\pmod{4}}g_j^2\lim\limits_{v\to\infty}f|_{\gamma_j}\left(\frac{g_j^2\tau+*}{4}\right).
$$
Taking the limits of both sides, we obtain the desired result.
\end{proof}

Using Lemma \ref{thm::hurwitzgrowth}, Lemma \ref{thm::thetagrowth}, Proposition \ref{thm::gausssum}, and Lemma \ref{thm::u4growth}, we can give a formula for the growth of $\widehat{\mathcal{H}}_{m,M}$ at any cusp of the congruence subgroup $\Gamma_{4M^2,M}$.

\begin{proposition}
\label{thm::cuspvalue}
Let $M,a,c\in\ZZ$ be integers such that $\gcd(a,c)=1$ and $c,M\geq1$. We set $\delta\coloneqq\gcd(c,M^2)$, set $c_0\coloneqq c/2^{\ord_2(c)}$, and set $\delta_0\coloneqq\delta/2^{\ord_2(\delta)}$. Let $k\in\ZZ$ be an integer such that $4ck/\delta\equiv1\pmod{M^2/\delta}$ if $\ord_2(c)\geq2\ord_2(M)$ and such that $ck/\delta\equiv1\pmod{4M^2/\delta}$ otherwise. Let $l\in\ZZ$ be the integer such that $4l\equiv1\pmod{c}$ when $c$ is odd. Then the following results hold:
\begin{enumerate}[leftmargin=8mm]
\item Assume that $\ord_2(c)\geq2\ord_2(M)$ holds. If $\delta\mid mM$, then we have 
$$
\nu_{\frac{a}{c}}(\widehat{\mathcal{H}}_{m,M})=-\frac{\sqrt{\delta}}{12M}\varepsilon_{\delta_0}^3\legendre{c/\delta}{M^2/\delta}\legendre{a}{\delta}e^{\frac{2\pi iakm^2}{\delta}}.
$$
Otherwise, it vanishes at the cusp $\frac{a}{c}$.
\item Assume that $0=\ord_2(c)\leq2\ord_2(M)-1$ holds. If $\delta\mid mM$, then we have 
$$
\nu_{\frac{a}{c}}(\widehat{\mathcal{H}}_{m,M})=-\frac{\sqrt{\delta}}{12M}\varepsilon_{\delta_0}^3\legendre{c/\delta}{M^2/\delta}\legendre{a}{\delta}e^{\frac{2\pi iaklm^2}{\delta}}.
$$
Otherwise, it vanishes at the cusp $\frac{a}{c}$.
\item Assume that $1\leq\ord_2(c)\leq2\ord_2(M)-1$ holds. If $\ord_2(c)=2\ord_2(M)-1$ and $\delta\parallel_2mM$, or $1\leq\ord_2(c)\leq2\ord_2(M)-2$ and $2\delta\mid mM$, then we have
$$
\nu_{\frac{a}{c}}(\widehat{\mathcal{H}}_{m,M})=-\frac{(1-i)\sqrt{\delta}}{12M}\varepsilon_{\delta_0}^3\varepsilon_{ac_0}\varepsilon\legendre{c/\delta}{M^2/\delta}\legendre{a}{\delta}e^{\frac{2\pi iakm^2}{4\delta}}.
$$
where we set
$$
\varepsilon\coloneqq
\begin{dcases}
0&\text{if }c\equiv2\pmod{4}\text{ and }m\text{ even, or }c\equiv0\pmod{4}\text{ and }m\text{ odd},\\
1&\text{otherwise},
\end{dcases}
$$
and the expression $a\parallel_2b$ means $a\mid b$ and $\ord_2(a)=\ord_2(b)$ for $a,b\in\ZZ$. Otherwise, it vanishes at the cusp $\frac{a}{c}$.
\end{enumerate}
\end{proposition}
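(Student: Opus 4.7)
The plan is to apply Lemma~\ref{thm::u4growth} to express $\nu_{a/c}(\widehat{\mathcal{H}}_{m,M})$ as a weighted sum, over $j\in\{0,1,2,3\}$, of the cusp values of $\widehat{\mathcal{H}}\theta_{0,m,M}$ at the fractions $(a+cj)/g_j$ over $4c/g_j$, with $g_j=\gcd(a+cj,4c)$. Since $\gcd(a,c)=1$, one computes $g_j=\gcd(a+cj,4)\in\{1,2,4\}$ when $c$ is odd and $g_j=1$ for every $j$ when $c$ is even (because $a$, and hence $a+cj$, is odd). Both $\widehat{\mathcal{H}}$ and $\theta_{0,m,M}$ admit leading terms at every cusp, so the cusp value of their product factors as
$$
\nu_{\rho}(\widehat{\mathcal{H}}\theta_{0,m,M}) = \nu_{\rho}(\widehat{\mathcal{H}})\,\nu_{\rho}(\theta_{0,m,M}).
$$
The first factor is supplied by Lemma~\ref{thm::hurwitzgrowth}, its value depending on $4c/g_j\pmod 4$. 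For the second, expanding $(Mx+m)^2$ in Lemma~\ref{thm::thetagrowth} yields
$$
\nu_{a/c'}(\theta_{0,m,M}) = \frac{(-i)^{1/2}}{M\sqrt{2c'}}\,e^{2\pi iam^2/c'}\,G\!\left(aM^2,\,2aMm,\,c'\right),\qquad c'=4c/g_j,
$$
to which Proposition~\ref{thm::gausssum} is applied. Using $\gcd(a,c)=1$, the non-vanishing condition $\gcd(aM^2,c')\mid 2aMm$ from part~(1) of that proposition reduces prime-by-prime to a divisibility comparing $\ord_p(M)$ and $\ord_p(m)$, which is exactly what the hypotheses ``$\delta\mid mM$'' (or ``$\delta\parallel_2 mM$'', ``$2\delta\mid mM$'' in Case~(3)) encode.

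The computation then splits along the comparison between $\ord_2(c)$ and $\ord_2(M)$. In Case~(1), with $\ord_2(c)\geq 2\ord_2(M)$ and $c$ even, one has $g_j=1$ and $c'\equiv 0\pmod 4$; Lemma~\ref{thm::hurwitzgrowth} uses its first branch and Proposition~\ref{thm::gausssum}(4) evaluates the Gauss sum. When $M$ is odd the same formula is obtained from the $c$-odd subcase by combining the $g_j=4$ summand (using part~(4) of both results after the shift) with the vanishing of the $g_j\in\{1,2\}$ summands. In Case~(2), where $c$ is odd and $M$ is even, the sole non-vanishing summand is the one with $g_j=4$, so that $c'=c$ is odd; the passage $a\mapsto(a+cj)/4$ is precisely what produces the factor $l$ (the inverse of $4$ modulo $c$) inside the exponential, and Proposition~\ref{thm::gausssum}(2) is invoked together with the third branch of Lemma~\ref{thm::hurwitzgrowth}. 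In Case~(3), $g_j=1$ throughout, but now $c'\equiv 0$ or $2\pmod 4$ according to the parity of $\ord_2(c)$, so Proposition~\ref{thm::gausssum}(3) or~(4) applies; the ``$\varepsilon$'' factor records the compatibility of the parity of $m$ with that of $c'$ required for the Gauss sum to be non-zero.

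The principal obstacle is the $2$-adic bookkeeping in Case~(3), where both the Hurwitz and theta factors are simultaneously delicate; it is here that one must verify the dichotomy $\ord_2(c)=2\ord_2(M)-1$ with $\delta\parallel_2 mM$ versus $1\leq\ord_2(c)\leq 2\ord_2(M)-2$ with $2\delta\mid mM$, and rule out spurious contributions in the remaining sub-cases. Once the Gauss sums are evaluated, the remaining work is formal: reduce the resulting Jacobi symbols by multiplicativity and quadratic reciprocity so as to isolate $\legendre{c/\delta}{M^2/\delta}\legendre{a}{\delta}$, collect the phase factors into $\varepsilon_{\delta_0}^3$ (together with the extra $\varepsilon_{ac_0}$ in Case~(3)), and identify the modular inverse $k$ as the inverse of $aM^2/\delta$ modulo the appropriate part of $c'/\delta$ arising in Proposition~\ref{thm::gausssum}. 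The two regimes for $k$ --- the congruence $4ck/\delta\equiv 1$ in Case~(1) versus $ck/\delta\equiv 1$ otherwise --- are precisely the two packagings of this inverse that arise according to whether the factor $4$ from $c'=4c$ is absorbed into the modulus or contributes separately through the exponential prefactor $e^{2\pi iam^2/c'}$.
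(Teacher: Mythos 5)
Your proposal follows essentially the same route as the paper, whose proof of this proposition is simply a citation of Lemma \ref{thm::hurwitzgrowth}, Lemma \ref{thm::thetagrowth}, Proposition \ref{thm::gausssum}, and Lemma \ref{thm::u4growth}; you correctly identify how these combine (split via $U_4$, factor the cusp value of the product, evaluate the resulting generalized quadratic Gauss sums case by case according to the $2$-adic comparison of $c$ and $M$). The only caveat is that the detailed $2$-adic bookkeeping in Case (3) is sketched rather than carried out, but the paper itself supplies no more detail there.
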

\begin{proof}
It follows from Lemma \ref{thm::thetagrowth}, Proposition \ref{thm::gausssum}, and Lemma \ref{thm::u4growth}.
\end{proof}

Now we are ready to establish an explicit formula for the zeroth moment $H_{m,M}(n)$.

\begin{theorem}
\label{thm::quasimodularformula}
For any integers $M\geq1,m\neq0$, we have
$$
H_{m,M}(n)=\frac{2\zeta(2)}{ML(2,1_M)}\sum_{\substack{\eta\text{ primitive}\\N_{\eta}\mid M}}\sum_{\substack{d\geq1\\d\in S(\eta,m,M)}}a(\eta,d)\sigma_{1,\eta,\eta}\left(\frac{n}{d}\right)+a_{0,m,M}(n)-\lambda_{0,m,M}(n),
$$
with $S(\eta,m,M)$ defined in (\ref{eqn::drestriction}) and $a(\eta,d)$ defined in (\ref{eqn::divisorcoefficient}), where $a_{0,m,M}(n)$ is the $n$-th Fourier coefficient of a holomorphic cusp form $g_{m,M}$ of weight $2$.
\end{theorem}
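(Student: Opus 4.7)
The plan is to apply Proposition \ref{thm::quasimodularformulageneral} to $f = \widehat{\mathcal{H}}_{m,M}$ and then extract the $n$-th Fourier coefficient of the resulting quasi-modular Eisenstein series $E$. By that proposition, $\holoproj(\widehat{\mathcal{H}}_{m,M}) = E + g$ with $E$ of the form (\ref{eqn::eisensteinformula}) and $g$ a holomorphic cusp form of weight $2$ on $\Gamma_{4M^2,M}$. Setting $a_{0,m,M}(n)$ to be the $n$-th Fourier coefficient of $g$, the relation (\ref{eqn::momentquasimodular}) reduces the problem to matching the $n$-th Fourier coefficient of $E$ with the explicit $\sigma_{1,\eta,\eta}$-sum in the theorem.

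To evaluate the coefficients $a_{f_\psi}(\epsilon,\eta,d)$ in (\ref{eqn::eisensteinformula}), I would first express the growth $\nu_{a/c}(f_\psi)$ as an average over $\overline{\psi}$ of the growth values $\nu_{a'/c}(\widehat{\mathcal{H}}_{m,M})$ via (\ref{eqn::explicitchodecomposition}). The latter are given explicitly by Proposition \ref{thm::cuspvalue} in three cases depending on the $2$-adic valuation of $c$ relative to $\ord_2(M)$; in each case the growth takes the form of a Kronecker symbol $\legendre{a}{\delta}$ times the sign $\varepsilon_{\delta_0}^3$ times an additive character $e^{2\pi i a k m^2/\delta}$, supported on the cusps whose denominator $\delta = \gcd(c, M^2)$ satisfies specific divisibility conditions with respect to $mM$.

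The key computation is the inner sum
\[
\sum_{a \in (\ZZ/cN_\eta\ZZ)^\times} \eta(a)\,\nu_{a/cN_\eta}(f_\psi),
\]
which becomes a hybrid Dirichlet-character--Kronecker-symbol--additive-character sum. Factoring $\eta = \hat\eta \tilde\eta$ into its primitive quadratic and square-conductor parts and applying the standard twisted Gauss-sum evaluation, this reduces prime-by-prime to a product of local contributions proportional to $G(\eta^\star)/G(\eta)$, producing the factor $\sqrt{d/DN_\eta}$ from the $\sqrt{\delta}$ appearing in Proposition \ref{thm::cuspvalue} together with $\hat\eta(D_{d,m})$ from the residual quadratic character acting on the square-free part of $m^2/\gcd(4d,m^2)$. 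Orthogonality of the additive character at each prime $p \mid M$ will force $\epsilon = \eta$ among the surviving terms of (\ref{eqn::eisensteinformula}), which explains why only the divisor function $\sigma_{1,\eta,\eta}$ appears in the final formula, and will pin down the allowable $d_p$ prime-by-prime, producing exactly the three local conditions defining $S(\eta,m,M)$ in (\ref{eqn::drestriction}): the case $p \nmid N_\eta$ yields condition (1), $p \mid N_{\hat\eta}$ yields condition (2), and $p \mid N_{\tilde\eta}$ yields condition (3).

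Finally, I would read off the $n$-th Fourier coefficient of $E_{2,\eta,\eta}(d\tau)$ from (\ref{eqn::quasimodulareisenstein}). With $\epsilon = \eta$ we have $\omega = 1_1$, giving $B_{2,\overline{\omega}} = 1/6$ and $G(\omega) = 1$, so the Euler-product factors $\prod_{p \mid N_\eta} p^2/(p^2-1)$ from the Eisenstein series combine with the prefactor $\prod_{p \mid 2M} p^2/(p^2 - \omega(p))$ in $a_{f_\psi}$ to produce the constant $2\zeta(2)/(M L(2, 1_M))$ in the theorem. The remaining local pieces of the M\"obius-type factor $\mathcal{S}_{4M^2/N_\eta^2,\eta,\eta}(d,c)$ assemble into the local factor $\Psi_{d,m}(p)$ for $p \mid M$ with $p \nmid N_\eta$, while the anomalous behaviour at the prime $2$ produces $\Phi_2(\eta)$. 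The hard part will be this final prime-by-prime bookkeeping: managing the M\"obius cancellations in $\mathcal{S}$, correctly combining all Gauss-sum and Kronecker-symbol factors in (\ref{eqn::divisorcoefficient}), and carefully analysing the three subcases at $p = 2$ (reflected in the piecewise definition of $\Phi_2(\eta)$, and requiring that we handle separately the conditions $2 \mid N_{\hat\eta}$ with $\eta_2 \neq \chi_8$, $2 \mid N_{\tilde\eta}$, and $2 \nmid N_\eta$ or $\eta_2 = \chi_8$) demand considerable care.
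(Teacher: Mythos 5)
Your proposal follows essentially the same route as the paper's proof: apply Proposition \ref{thm::quasimodularformulageneral} to $\widehat{\mathcal{H}}_{m,M}$, feed in the cusp values from Proposition \ref{thm::cuspvalue}, evaluate the resulting hybrid character/Gauss sums (which force $\epsilon=\eta$, i.e.\ $\psi=\eta^2$), and assemble the local factors $\Psi_{d,m}(p)$, $\Phi_2(\eta)$ and the constant $2\zeta(2)/(ML(2,1_M))$ by multiplicativity before invoking (\ref{eqn::momentquasimodular}). The only slight inaccuracy is that it is orthogonality of the multiplicative characters in the $x$-average defining $f_\psi$ (not of the additive character) that forces $\psi=\eta^2$; otherwise the plan matches the paper, which likewise carries out the local bookkeeping in detail only for odd $M$.
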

\begin{proof}
We prove for any odd integer $M\geq1$. The proof for even integers $M$ is similar but more complicated, which is left to the readers. 

Set $f\coloneqq\widehat{\mathcal{H}}_{m,M}$. First, we calculate
$$
\nu_{c,\eta}(f_{\psi})\coloneqq\sum_{a\in(\ZZ/cN_{\eta}\ZZ)^{\times}}\frac{\eta(a)\nu_{\frac{a}{cN_{\eta}}}(f_{\psi})}{\phi(cN_{\eta})},
$$
where $\psi$ is a Dirichlet character modulo $M$, $\eta$ is a primitive Dirichlet character such that $\psi=\epsilon\eta$ with a primitive Dirichlet character $\epsilon$ such that $cN_{\epsilon}N_{\eta}\mid4M^2$. Because $M$ is odd, it is easy to show that $N_{\epsilon},N_{\eta}\mid M$. We set $\delta\coloneqq\gcd(cN_{\eta},M^2)$ and $\delta_0\coloneqq\delta/\delta_2$. By Proposition \ref{thm::cuspvalue}, if $\delta\nmid mM$, then we have 
$$
\nu_{c,\eta}(f_{\psi})=0,
$$
and if $\delta\mid mM$, then we have
$$
\nu_{c,\eta}(f_{\psi})=-\frac{\sqrt{\delta}\varepsilon_{\delta_0}^3}{12M}\legendre{cN_{\eta}/\delta}{M^2/\delta}\sum_{\substack{a\in(\ZZ/cN_{\eta}\ZZ)^{\times}\\x\in(\ZZ/M\ZZ)^{\times}}}\frac{\eta(a)\psi(x)}{\phi(cN_{\eta})\phi(M)}\legendre{a}{\delta}e^{\frac{2\pi iakm^2x^2}{\delta}},
$$
where $k$ is an integer such that $4cN_{\eta}k/\delta\equiv1\pmod{M^2/\delta}$. Set
$$
G(c,\eta)\coloneqq\sum_{\substack{a\in(\ZZ/cN_{\eta}\ZZ)^{\times}\\x\in(\ZZ/M\ZZ)^{\times}}}\frac{\eta(a)\psi(x)}{\phi(cN_{\eta})\phi(M)}\legendre{a}{\delta}e^{\frac{2\pi iakm^2x^2}{\delta}}.
$$
Put $c_0\coloneqq c/c_2$ and $g\coloneqq\gcd(c_0N_{\eta},km^2)$. Then by \cite[Theorem 9.12]{montgomery2007multiplicative}, we have
\begin{align*}
G(c,\eta)=&\sum_{x\in(\ZZ/M\ZZ)^{\times}}\frac{\psi(x)}{\phi(cN_{\eta})\phi(M)}\sum_{a\in(\ZZ/cN_{\eta}\ZZ)^{\times}}\eta(a)\legendre{a}{\delta}e^{\frac{2\pi iakm^2x^2}{\delta}}\\
=&\sum_{x\in(\ZZ/M\ZZ)^{\times}}\frac{\psi(x)}{\phi(M)}\overline{\chi}\left(\frac{km^2x^2}{g}\right)\chi\left(\frac{c_0N_{\eta}}{gN_{\chi}}\right)\mu\left(\frac{c_0N_{\eta}}{gN_{\chi}}\right)\frac{G(\chi)}{\phi(c_0N_{\eta}/g)}\\
=&~\delta_{\psi=\eta^2}\overline{\chi}\left(\frac{km^2}{g}\right)\chi\left(\frac{c_0N_{\eta}}{gN_{\chi}}\right)\mu\left(\frac{c_0N_{\eta}}{gN_{\chi}}\right)\frac{G(\chi)}{\phi(c_0N_{\eta}/g)},
\end{align*}
where $\chi$ is the primitive character inducing the Dirichlet character $\eta\chi_{\delta}$ and we extend the domain of the Mobius function to any positive rational numbers by setting $\mu(r)=0$ for any $r\not\in\ZZ$. 

Using multiplicativity to evaluate it, we see that
$$
G(c,\eta)=\delta(c,\eta)\delta_{\psi=\eta^2}\frac{G(\chi)}{\phi(N_{\chi})}\overline{\chi}\left(\frac{km^2}{\gcd(c_0,m^2)}\right)\chi(N_{\hat{\eta}})\prod_{\substack{p\mid N_{\hat{\eta}}\\c_p=m_p^2}}\frac{1}{1-p},
$$
where we set $\delta(c,\eta)\coloneqq1$ if all of the following conditions
\begin{enumerate}[leftmargin=8mm]
\item $c_p\mid m_p^2$ and $c_p\in\ZZ^2$ or $c_p=pm_p^2$ for any prime number $p\nmid2N_{\eta}$.
\item $c_p\mid m_p^2$ and $c_p\in\ZZ^2$ for any prime number $p\mid N_{\hat{\eta}}$.
\item $c_p=m_p^2$ for any prime number $p\mid N_{\tilde{\eta}}$.
\end{enumerate}
hold, and otherwise we set $\delta(c,\eta)\coloneqq0$. Using the fact that $4cN_{\eta}k/\delta\equiv1\pmod{M^2/\delta}$, we have
\begin{equation}
\label{eqn::nuceta}
\nu_{c,\eta}(f_{\psi})=-\frac{\delta(c,\eta)\delta_{\psi=\eta^2}G(\eta^{\star})\epsilon_{N_{\eta}}^3\sqrt{N_{\eta}}}{12M\phi(N_{\eta^{\star}})\overline{\eta^{\star}}(N_{\hat{\eta}})}\cdot\frac{\eta(4c_2C_0)\sqrt{c_0C_0}}{\phi(C_0)\eta^{\star}\big(\frac{m^2}{\gcd(c_0,m^2)}\big)}\prod_{\substack{p\mid N_{\hat{\eta}}\\c_p=m_p^2}}\frac{1}{1-p},
\end{equation}
where $C_0$ is the square-free part of $c_0$.

Applying Proposition \ref{thm::quasimodularformulageneral}, we have
\begin{equation}
\label{eqn::preformula1}
\holoproj(\widehat{\mathcal{H}}_{m,M})(\tau)=\sum_{\substack{\eta\text{ primitive}\\N_{\eta}\mid M}}\sum_{d\mid4M^2/N_{\eta}^2}a_{f_{\eta^2}}(\eta,\eta,d)E_{2,\eta,\eta}(d\tau)+g_{m,M}(\tau),
\end{equation}
with a holomorphic cusp form $g_{m,M}$ of weight $2$. Using (\ref{eqn::nuceta}), we have
\begin{equation}
\label{eqn::preformula2}
\begin{split}
a_{f_{\eta^2}}(\eta,\eta,d)=&\prod_{\substack{p\mid2M\\p\nmid N_{\eta}}}\left(1-\frac{1}{p^2}\right)^{-1}\sum_{c\mid4M^2/N_{\eta}^2}\mathcal{R}_{\eta,\eta}(d,c)\mathcal{S}_{4M^2/N_{\eta}^2,\eta,\eta}(d,c)\nu_{c,\eta}(f_{\eta^2})\\
=&-\frac{G(\eta^{\star})\epsilon_{N_{\eta}}^3\sqrt{N_{\eta}}}{12M\phi(N_{\eta^{\star}})\overline{\eta^{\star}}(N_{\hat{\eta}})}\prod_{\substack{p\mid2M\\p\nmid N_{\eta}}}\left(1-\frac{1}{p^2}\right)^{-1}s(\eta,d,m,M),
\end{split}
\end{equation}
where we set
$$
s(\eta,d,m,M)\coloneqq\sum_{c\mid4M^2/N_{\eta}^2}\mathcal{R}_{\eta,\eta}(d,c)\mathcal{S}_{4M^2/N_{\eta}^2,\eta,\eta}(d,c)\frac{\delta(c,\eta)\eta(4c_2C)\sqrt{c_0C}}{\phi(C)\eta^{\star}\big(\frac{m^2}{\gcd(c_0,m^2)}\big)}\prod_{\substack{p\mid N_{\hat{\eta}}\\c_p=m_p^2}}\frac{1}{1-p}.
$$
Using multiplicativity to evaluate the sum $s(\eta,d,m,M)$ and noting that we require $\delta=c_0N_{\eta}\mid mM$, we see that
$$
s(\eta,d,m,M)=\frac{3}{4}\cdot\frac{\delta_{d\in S(\eta,m,M)}\eta\big(\frac{-4d}{\gcd(d,m^2)}\big)}{\tilde{\eta}\big(\frac{m^2}{\gcd(d,m^2)}\big)\hat{\eta}(D_{d,m})}\sqrt{\frac{d}{D}}\prod_{\substack{p\mid N_{\hat{\eta}}\\c_p=m_p^2}}\frac{1}{1-p}\prod_{p\nmid N_{\eta}}\Psi_{d,m}(p).
$$
Combining with (\ref{eqn::quasimodulareisenstein}), (\ref{eqn::preformula1}), and (\ref{eqn::preformula2}), we see that
$$
\widehat{H}_{m,M}(n)=\frac{2\zeta(2)}{ML(2,1_M)}\sum_{\substack{\eta\text{ primitive}\\N_{\eta}\mid M}}\sum_{\substack{d\geq1\\d\in S(\eta,m,M)}}a(\eta,d)\sigma_{1,\eta,\eta}\left(\frac{n}{d}\right)+a_{0,m,M}(n),
$$
where $a_{0,m,M}(n)$ is the $n$-th Fourier coefficient of the cusp form $g_{m,M}$. Hence the desired result follows from (\ref{eqn::momentquasimodular}).
\end{proof}

We discuss some immediate applications of Theorem \ref{thm::quasimodularformula}. First, we prove an asymptotic lower bound for any even moment $H_{k,m,M}(n)$ when $\gcd(n,M)=1$.

\begin{corollary}
\label{thm::zerothmomentgrowth}
Fix integers $m\in\ZZ,M\geq1$ and an even integer $k\geq0$. Suppose that $n$ is an integer such that $\gcd(n,M)=1$. Then we have
$$
H_{k,m,M}(n)\gg_{k,M}n^{\frac{k}{2}+1},
$$
as $n\to\infty$.
\end{corollary}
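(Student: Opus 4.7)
My plan is to reduce to the case $k=0$ via Theorem \ref{thm::maintheorem1}, then apply Theorem \ref{thm::maintheorem2} and extract a positive leading term of order $n$ from the Eisenstein part.

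For $k$ even, Theorem \ref{thm::maintheorem1} gives
\[
H_{k,m,M}(n) = C_{k/2} n^{k/2} H_{m,M}(n) + \sum_{\mu=0}^{k/2-1} T(k,\mu)\bigl(a_{k-2\mu,m,M}(n) - \lambda_{k-2\mu,m,M}(n)\bigr) n^\mu,
\]
with leading Catalan factor $C_{k/2} > 0$. By Deligne's bound applied to the weight $k-2\mu+2 \geq 4$ holomorphic cusp form we have $|a_{k-2\mu,m,M}(n)| \ll_{k,M,\epsilon} n^{(k-2\mu+1)/2+\epsilon}$; a direct estimate on (\ref{eqn::sumsmallerdivisor}), using that $(t-s)/2 \leq \sqrt{n}$ whenever $t^2-s^2=4n$ with $0\leq s\leq t$, yields $|\lambda_{k-2\mu,m,M}(n)| \ll_\epsilon n^{(k-2\mu+1)/2+\epsilon}$. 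After multiplication by $n^\mu$, each summand is $O_\epsilon(n^{(k+1)/2+\epsilon})$, so it suffices to show $H_{m,M}(n) \gg_M n$ whenever $\gcd(n,M)=1$.

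By Theorem \ref{thm::maintheorem2} (the case $m=0$ is handled by a parallel computation of the cusp values),
\[
H_{m,M}(n) = \frac{2\zeta(2)}{M L(2,1_M)} \sum_{\substack{\eta\text{ primitive}\\ N_{\eta} \mid M}} \sum_{d \in S(\eta,m,M)} a(\eta,d)\, \sigma_{1,\eta,\eta}(n/d) + a_{0,m,M}(n) - \lambda_{0,m,M}(n).
\]
By Deligne applied to $g_{m,M}$ we have $a_{0,m,M}(n) = O_\epsilon(n^{1/2+\epsilon})$, and by the elementary estimate above $\lambda_{0,m,M}(n) = O_\epsilon(n^{1/2+\epsilon})$; both are $o(n)$, so the problem reduces to bounding the Eisenstein sum from below by a positive multiple of $n$.

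I would isolate the term $\eta = 1_1, d = 1$ of this sum. A direct evaluation of (\ref{eqn::divisorcoefficient}) in this case simplifies (using $\Phi_2(1_1)=1$ and $\Psi_{1,m}(p)=1-1/(p^2-p)$ when $p\nmid m$, $\Psi_{1,m}(p)=1$ when $p\mid m$) to $a(1_1, 1) = \prod_{p \mid M,\, p \nmid m}(1 - 1/(p^2-p))$, which is strictly positive since each factor is at least $1/2$; combined with $\sigma_1(n)\geq n$ this alone contributes a positive $M$-dependent constant times $n$. For each remaining pair $(\eta, d)$, the set $S(\eta,m,M)$ is finite and depends only on $m,M$, and the assumption $\gcd(n,M)=1$ forces $\gcd(d,M)=1$ whenever $d\mid n$, in which case $\sigma_{1,\eta,\eta}(n/d)=\eta(n/d)\sigma_1(n/d)$ because $\gcd(n/d,N_{\eta})=1$. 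The main obstacle is then showing that the finite character sum formed by the remaining terms does not cancel the main contribution uniformly in the residue class of $n\bmod M$; I would handle this either by computing the character sum directly via the multiplicativity of $a(\eta,d)$ across prime divisors of $M$, or, more robustly, by combining the pointwise non-negativity $H_{m,M}(n)\geq 0$ with the Siegel-type lower bound $H_{m,M}(n)\gg_{M,\epsilon} n^{1-\epsilon}$ to exclude any residue class in which the leading coefficient of the Eisenstein sum would vanish. This establishes $H_{m,M}(n)\gg_M n$ and hence the stated lower bound $H_{k,m,M}(n)\gg_{k,M} n^{k/2+1}$.
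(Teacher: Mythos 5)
Your proposal is correct and, in the route you yourself flag as "more robust," is essentially the paper's proof: the paper also reduces to showing $H_{m,M}(n)\gg_M n$, notes that $\gcd(n,M)=1$ kills all $d>1$ terms so the Eisenstein part is a constant (depending only on $n\bmod M$) times $\sigma_1(n)$, and then uses the lower bound $H_{m,M}(n)\gg_{\epsilon,M}n^{1-\epsilon}$ from \cite[Lemma~3.7]{kane2020distribution} together with eventual positivity of $H_{m,M}(n)$ to force that constant to be positive, before lifting to general even $k$ via (\ref{eqn::highermomentexplicitformula}) and Deligne. The direct evaluation of the character sum (your first option, including isolating $a(1_1,1)$) is unnecessary and is not what the paper does.
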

\begin{proof}
By \cite[Lemma 3.7]{kane2020distribution}, for any positive real number $\epsilon>0$, we have
\begin{equation}
\label{eqn::asymptoticlowerbound}
H_{m,M}(n)\gg_{\epsilon,M}n^{1-\epsilon},
\end{equation}
as $n\to\infty$. By Theorem \ref{thm::quasimodularformula}, (\ref{eqn::momentquasimodular}), and Deligne's optimal bound \cite{deligne1974conjecture}, we see that
$$
H_{m,M}(n)=\frac{2\zeta(2)}{ML(2,1_M)}\sum_{\substack{\eta\text{ primitive}\\N_{\eta}\mid M}}\sum_{\substack{d\geq1\\d\in S(\eta,m,M)}}a(\eta,d)\eta(n/d)\sigma_1(n/d)+O(n^{\frac{1}{2}}).
$$
Noting that the summand in the inner sum vanishes unless $d=1$ because $\gcd(n,M)=1$ and comparing with the asymptotic lower bound (\ref{eqn::asymptoticlowerbound}), we have as $n\to\infty$,
$$
H_{m,M}(n)\gg_M\sigma_1(n)\geq n,
$$
because $H_{m,M}(n)$ is a positive rational number as $n\to\infty$. Using (\ref{eqn::highermomentexplicitformula}) and Deligne's optimal bound \cite{deligne1974conjecture} again, we see that as $n\to\infty$,
$$
H_{k,m,M}(n)\gg_{k,M}n^{\frac{k}{2}+1},
$$
as desired.
\end{proof}

As an application, we classify the pairs $(m,M)$ with $m\in\ZZ,M\geq1$ for which the cuspidal part $g_{m,M}$ is non-trivial. Let $S(m,M)$ be the set of primitive Dirichlet characters $\eta$ with $N_{\eta}\mid M$ satisfying the following conditions:
\begin{enumerate}[leftmargin=8mm]
\item $N_{\eta_p}\leq p^2m_p^2$ if $\eta_p$ is quadratic.
\item $\eta_p$ is trivial or quadratic unless either $p\neq2$ and $p\nmid m$, or $p=2$ and $m\equiv2\pmod{4}$ holds. 
\end{enumerate}

\begin{corollary}
\label{thm::zerocuspidalpart}
Suppose that $m,M$ are integers such that $1\leq m\leq M$. We have $g_{m,M}=0$ in Theorem \ref{thm::quasimodularformula} if and only if $1\leq M\leq 5$ or $(m,M)=(2,8),(6,8)$.
\end{corollary}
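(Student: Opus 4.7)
The plan is to reduce the question to a finite Fourier-coefficient verification and then handle the two directions separately by computation and by exhibiting discrepancies. Combining Theorem \ref{thm::quasimodularformula} with the relation (\ref{eqn::momentquasimodular}), the $n$-th Fourier coefficient of $g_{m,M}$ can be written as
\begin{equation*}
a_{0,m,M}(n) = H_{m,M}(n) + \lambda_{0,m,M}(n) - \frac{2\zeta(2)}{ML(2,1_M)}\sum_{\substack{\eta\text{ primitive}\\ N_\eta\mid M}}\sum_{\substack{d\ge 1\\ d\in S(\eta,m,M)}} a(\eta,d)\,\sigma_{1,\eta,\eta}(n/d).
\end{equation*}
Since $g_{m,M}\in S_2(\Gamma_{4M^2,M})$, by Sturm's bound the vanishing $g_{m,M}=0$ is equivalent to the right-hand side being zero for every $n$ up to a bound $B(M)$ of the form $\tfrac{1}{6}[\SL_2(\ZZ):\Gamma_{4M^2,M}]$. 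For each fixed pair $(m,M)$, deciding vanishing is therefore a finite check.

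For the sufficiency direction, I would verify the listed pairs $(m,M)$ with $1\le M\le 5$ or $(m,M)\in\{(2,8),(6,8)\}$ by a direct finite computation: for each $n\le B(M)$, compute $H_{m,M}(n)$ from its definition using standard tables of Hurwitz class numbers, $\lambda_{0,m,M}(n)$ from (\ref{eqn::sumsmallerdivisor}), and the inner Eisenstein sum from Theorem \ref{thm::quasimodularformula}, and check that their combination vanishes identically. The modest size of the Sturm bound for these $M$ makes this a straightforward computer-algebra verification.

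For the necessity direction I would show, for every other pair $(m,M)$, the existence of an $n$ making the above expression non-zero. A natural choice is $n=p$ for a small prime with $\gcd(p,2M)=1$: then $\sigma_{1,\eta,\eta}(n/d)$ vanishes unless $d=1$, so the Eisenstein contribution collapses to an explicit linear combination indexed by the primitive characters with conductor dividing $M$. Meanwhile $H_{m,M}(p)$ reduces to a short explicit sum of Hurwitz class numbers. The resulting arithmetic identity can be analyzed case-by-case according to the factorization of $M$ and the residue class of $m\pmod{M}$, exploiting the explicit structure of $S(\eta,m,M)$ in (\ref{eqn::drestriction}) to produce a non-zero discrepancy in each case.

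The main obstacle is the case analysis in the necessity direction: for $M$ just outside the listed range one must preclude accidental coincidences at every $n\le B(M)$, which in practice requires a systematic finite check augmented by uniform asymptotic arguments (using the lower bound of Corollary \ref{thm::zerothmomentgrowth} together with the growth of the Sturm bound) to handle all sufficiently large $M$ at once. The fact that exactly the listed $17$ pairs produce $g_{m,M}=0$ reflects that for these small levels the Eisenstein part of $\holoproj(\widehat{\mathcal{H}}_{m,M})$ happens to saturate the relevant component of $S_2(\Gamma_{4M^2,M})$, which explains both why the list is finite and why some individual verification seems unavoidable.
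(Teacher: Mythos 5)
Your overall skeleton (a finite verification for the listed pairs plus a non-vanishing argument for all remaining pairs) matches the shape of the paper's proof, and the sufficiency direction is handled the same way, by direct computation. The gap is in the necessity direction: the step that must handle all sufficiently large $M$ at once is the heart of the proof, and you only assert that ``uniform asymptotic arguments'' exist rather than giving one. Your proposed witness $n=p$ with $p\nmid 2M$ does collapse the Eisenstein sum to $d=1$, but it leaves $H_{m,M}(p)$ as a genuinely growing sum of Hurwitz class numbers with no closed form, so the ``non-zero discrepancy in each case'' cannot be read off; and Corollary \ref{thm::zerothmomentgrowth} combined with the growth of the Sturm bound does not produce it, since if $g_{m,M}$ were zero both sides of your identity would still grow like $\sigma_1(n)$ --- growth rates alone cannot distinguish them. (Note also that the Sturm bound plays no role in proving $g_{m,M}\neq 0$: a single witness coefficient suffices, so there is no need to ``preclude accidental coincidences at every $n\le B(M)$.'')

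The paper's mechanism is to test the single coefficient $n=1$. There $H_{m,M}(1)+\lambda_{0,m,M}(1)$ involves only $H(0),H(3),H(4)$ and equals one of the explicit values $1/2$, $5/12$, $1/3$, $0$ according to whether $m\equiv 0,\pm2,\pm1\pmod{M}$ or otherwise; meanwhile the Eisenstein contribution at $n=1$ is shown to be strictly positive (via Corollary \ref{thm::zerothmomentgrowth} applied to $n\equiv1\pmod{M}$) and, using the prefactor $\frac{2\zeta(2)}{ML(2,1_M)}$ together with the estimate $\sum_{\eta}|a(\eta,1)|\ll_{\epsilon}M^{\epsilon}$, of size $O_{\epsilon}(M^{\epsilon-1})$, hence eventually smaller than $1/3$ and never zero. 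This forces $a_{0,m,M}(1)\neq0$ for all sufficiently large $M$ in every residue class of $m$, leaving only finitely many $M$ for the computer check. To repair your argument, replace the witness $n=p$ by $n=1$ and supply the positivity and decay of the Eisenstein term; as written, the uniform large-$M$ step is missing.
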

\begin{proof}
By Theorem \ref{thm::quasimodularformula}, we have
$$
H_{m,M}(1)+\lambda_{0,m,M}(1)=\widehat{H}_{m,M}(1).
$$
It is easy to verify that
$$
H_{m,M}(1)+\lambda_{0,m,M}(1)=
\begin{dcases}
1/2&\text{ if }m=M,\\
1/3&\text{ if }m=1,M-1,\\
5/12&\text{ if }m=2,M-2,\\
0&\text{ otherwise}.
\end{dcases}
$$
Thus if we can show that $\widehat{H}_{m,M}(1)-a_{0,m,M}(1)$ is positive and decays to $0$ uniformly in $m$ as $M\to\infty$, then $g_{m,M}\neq0$ for sufficiently large $M$.

Assuming that $n$ is a positive integer such that $n\equiv1\pmod{M}$ in Theorem \ref{thm::quasimodularformula}, we have
$$
\widehat{H}_{m,M}(n)-a_{0,m,M}(n)=\frac{2\zeta(2)}{ML(2,1_M)}\sum_{\eta\in S(m,M)}a(\eta,1)\sigma_1(n).
$$
By Corollary \ref{thm::zerothmomentgrowth}, we see that
$$
\frac{2\zeta(2)}{ML(2,1_M)}\sum_{\eta\in S(m,M)}a(\eta,1)
$$
is positive and it follows that $\widehat{H}_{m,M}(1)-a_{0,m,M}(1)$ is positive. Furthermore, we have
\begin{align*}
\Bigg|\sum_{\eta\in S(m,M)}a(\eta,1)\Bigg|\leq&\sum_{\eta\in S(m,M)}\frac{|\Phi_2(\eta)|}{N_{\hat{\eta}}\phi(N_{\tilde{\eta}})\sqrt{N_{\tilde{\eta}}}}\prod_{\substack{p\mid M\\p\nmid mN_{\eta}}}\left(1-\frac{1}{p^2-p}\right)\prod_{\substack{p\mid N_{\hat{\eta}}\\N_{\eta_p}^2=p^2m_p^2}}\frac{1}{1-p}\\
\ll&\sum_{\eta\pmod{M}}\frac{1}{N_{\eta}}\ll_{\epsilon}M^{\epsilon},
\end{align*}
for any positive real number $\epsilon>0$. It follows that $\widehat{H}_{m,M}(1)-a_{0,m,M}(1)$ decays to $0$ uniformly in $m$ as $M\to\infty$. Hence, we see that $g_{m,M}\neq0$ for any sufficiently large positive integer $M$. 

By running a computer program, one easily verifies that $g_{m,M}=0$ if and only if $1\leq M\leq 5$ or $(m,M)=(2,8),(6,8)$.
\end{proof}

\section{The bias in the moments of traces of Frobenius}\label{sec:EllipticBias}

In this section, we are going to use the formula for the moments of Hurwitz class numbers to investigate the bias in the moments of traces of Frobenius of elliptic curves over a finite field. Fix a prime number $p\geq5$ and integers $k,m\geq0,r,M\geq1$ such that $p\nmid M$. We have defined the moment of traces of Frobenius in (\ref{eqn::definemomenttrace}) by
$$
S_{k,m,M}(p^r)=\sum_{\substack{E/\FF_{p^r}\\\trace(E)\equiv m\pmod{M}}}\frac{\trace(E)^k}{|\aut(E)|},
$$
where the sum runs over all equivalence classes of elliptic curves over $\FF_{p^r}$. 

\subsection{Formula for the moment $S_{k,m,M}(p^r)$}

Using the work of Schoof \cite{schoof1987nonsingular}, we can relate the moment $S_{k,m,M}(p^r)$ of traces of the Frobenius to the moment $H_{k,m,M}(p^r)$ of Hurwitz class numbers in the following lemma.

\begin{lemma}
\label{thm::twomomentsrelation}
For any integers $k\geq0,r\geq1,m\in\ZZ,M\geq1$ and any prime number $p\geq5$ such that $p\nmid M$, we have
$$
2S_{k,m,M}(p^r)=H_{k,m,M}(p^r)-p^{k+1}H_{k,m\overline{p},M}(p^{r-2}),
$$
where $\overline{p}$ is any integer such that $p\overline{p}\equiv1\pmod{M}$ and the second term on the right-hand side is understood as $0$ if $r=1$.
\end{lemma}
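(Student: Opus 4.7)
The plan is to derive the lemma from a pointwise (in $t$) identity between the weighted elliptic-curve count and Hurwitz class numbers, and then to sum against $t^k$ over the arithmetic progression. Writing $N(t,p^r):=\sum_{E/\FF_{p^r},\,\trace(E)=t}1/|\aut(E)|$, so that $S_{k,m,M}(p^r)=\sum_{t\equiv m\pmod{M}}t^kN(t,p^r)$ (the sum being effectively restricted to $t^2\leq 4p^r$ by the Hasse--Weil bound and the convention $H(D)=0$ for $D<0$), the target pointwise identity is
$$
2N(t,p^r)=H(4p^r-t^2)-p\cdot\delta_{p\mid t,\,r\geq 2}\,H\!\left(4p^{r-2}-(t/p)^2\right).
$$

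Granting this identity, multiplying by $t^k$ and summing over $t\equiv m\pmod{M}$ immediately yields $2S_{k,m,M}(p^r)$ on the left, while the first term on the right becomes $H_{k,m,M}(p^r)$. For the correction, the indicator $\delta_{p\mid t}$ restricts to $t=pt'$; since $p\nmid M$ the inverse $\overline{p}\pmod{M}$ exists and the congruence $t\equiv m\pmod{M}$ translates to $t'\equiv m\overline{p}\pmod{M}$. Using $t^k=p^k(t')^k$ and $(t/p)^2=(t')^2$, the correction sum becomes exactly $p^{k+1}H_{k,m\overline{p},M}(p^{r-2})$. For $r=1$ the Hasse--Weil bound forces $t=0$ whenever $p\mid t$, and the convention that $H$ is evaluated only at nonnegative integers makes the correction trivially zero, matching the statement.

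It remains to establish the pointwise identity, which is essentially Schoof's theorem in \cite{schoof1987nonsingular} in a repackaged form. In the ordinary range $\gcd(t,p)=1$, Schoof directly gives $2N(t,p^r)=H(4p^r-t^2)$, and the correction vanishes since $p\nmid t$. For the supersingular traces $p\mid t$, Schoof's formulas split into several cases (corresponding to $t=0$ and $t^2=p^r,2p^r,3p^r,4p^r$ with the appropriate parity of $r$), each giving a specific combination of class numbers weighted by the supersingular isogeny structure; one then uses $4p^r-t^2=p^2(4p^{r-2}-(t/p)^2)$ together with the standard class-number transformation $H(p^2D)=pH(D)+\cdots$ to collapse each case into $\tfrac{1}{2}\bigl(H(4p^r-t^2)-pH(4p^{r-2}-(t/p)^2)\bigr)$. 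The hypothesis $p\geq 5$ is exactly what ensures that the extra automorphism contributions from the CM orders $\ZZ[i]$ and $\ZZ[\zeta_3]$ do not interfere. The main obstacle is this case-by-case unification for $p\mid t$: tracking the supersingular automorphism weights and matching them with the transformation formula is the substantive arithmetic content of the lemma, whereas the passage to arithmetic progressions and the $t^k$ weighting is purely formal bookkeeping.
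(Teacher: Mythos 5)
Your reduction of the lemma to the pointwise identity $2N(t,p^r)=H(4p^r-t^2)-p\,\delta_{p\mid t,\,r\geq 2}H\big(4p^{r-2}-(t/p)^2\big)$ is correct, and the bookkeeping step (reindexing $t=pt'$, picking up $p^k$ from $t^k$ and one more $p$ from the identity, and converting $t\equiv m\pmod{M}$ into $t'\equiv m\overline{p}\pmod{M}$ using $p\nmid M$) exactly reproduces the claimed $p^{k+1}H_{k,m\overline{p},M}(p^{r-2})$ term. The pointwise identity itself is true: in the ordinary range it is Deuring--Schoof, and in the supersingular cases ($t=0$ and $t^2=4p^r$ for $p\geq 5$) it follows from Schoof's mass formulas combined with the conductor-$p$ step of the Hurwitz class number formula, e.g.\ $H(4p^r)-pH(4p^{r-2})=H(4p)$ for odd $r$ and $H(0)-pH(0)=\frac{p-1}{12}$ for $t^2=4p^r$. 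The paper takes a different organizational route: it never forms a uniform pointwise identity, but instead cites two lemmas from the earlier paper of the first two authors --- one giving $2S_{k,m,M}(p^r)=\widetilde{H}_{k,m,M}(p^r)+E_{k,m,M}(p^r)$ with $\widetilde H$ the sum restricted to $p\nmid t$ and $E$ an explicit supersingular error term, and one giving $\widetilde{H}_{k,m,M}(p^r)=H_{k,m,M}(p^r)-p^{k+1}H_{k,m\overline{p},M}(p^{r-2})-E_{k,m,M}(p^r)$ --- so that the supersingular contributions cancel rather than being absorbed into a single formula. Your version is arguably cleaner and more self-contained in statement, and it makes transparent why the lemma is uniform in $k$, $m$, $M$; its cost is that the case-by-case verification for $p\mid t$, which you correctly identify as the substantive content but only sketch, must actually be carried out (it is precisely the assertion that the paper's error term $E_{k,m,M}(p^r)$ appears with opposite signs in the two cited lemmas), whereas the paper outsources that verification to the prior work. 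One small caveat: your explanation of the $r=1$ case slightly misattributes the vanishing of the correction to the convention $H(D)=0$ for $D<0$; what is really used is that your indicator $\delta_{r\geq 2}$ switches the correction off and that the identity $2N(0,p)=H(4p)$ holds without correction, which is a genuine (if classical) supersingular computation rather than a convention.
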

\begin{proof}
We define 
$$
\widetilde{H}_{k,m,M}(n)\coloneqq\sum_{\substack{t\equiv m\pmod{M}\\p\nmid t}}t^kH(4n-t^2),
$$
for any integer $n$. By \cite[Lemma 2.2]{kane2020distribution}, which is essentially read off from Schoof's work \cite{schoof1987nonsingular}, we have the relation
\begin{equation}
\label{eqn::schoof}
2S_{k,m,M}(p^r)=\widetilde{H}_{k,m,M}(p^r)+E_{k,m,M}(p^r),
\end{equation}
where the error term is given by
$$
E_{k,m,M}(p^r)\coloneqq\delta_1H(4p)+\delta_2\frac{1-\chi_p(-1)}{2}+\frac{1-\chi_p(-3)}{3}p^{\frac{rk}{2}}\rho_{k,m,M}(p^r)+\frac{2^{k}(p-1)}{12}p^{\frac{rk}{2}}\rho_{k,m,M}(4p^r),
$$
where we put $\delta_1\coloneqq\delta_{m\nmid M}\delta_{k=0}\delta_{2\nmid r}$, $\delta_2\coloneqq\delta_{m\nmid M}\delta_{k=0}\delta_{2\mid r}$, and
$$
\rho_{k,m,M}(n)\coloneqq\sum_{\substack{t\equiv m\pmod{M}\\t^2=n}}\sgn(t)^k.
$$
By \cite[Lemma 4.1]{kane2020distribution} and noting that its proof indeed works for any integer $k\geq0$, we see that
$$
\widetilde{H}_{k,m,M}(p^r)=H_{k,m,M}{p^r}-H_{k,m\overline{p},M}(p^{r-2})-E_{k,m,M}(p^r),
$$
Thus, plugging it into (\ref{eqn::schoof}), we obtain
$$
2S_{k,m,M}(p^r)=H_{k,m,M}(p^r)-p^{k+1}H_{k,m\overline{p},M}(p^{r-2}),
$$
as desired.
\end{proof}

Therefore, combining this lemma with Theorem \ref{thm::quasimodularformula}, (\ref{eqn::highermomentexplicitformula}), and Theorem \ref{thm::coefficientformula}, one can deduce a formula for the moment $S_{k,m,M}(p^r)$ of the form (\ref{eqn::asymptoticexpansion}). In the following, we consider two examples $S_{2,m,M}(p)$ and $S_{2,m,M}(p^2)$.

\subsection{Bias in the second moment $S_{2,m,M}(p)$}

Because $\gcd(p,M)=1$, by Theorem \ref{thm::maintheorem2}, we have,
\begin{align*}
H_{m,M}(p)=\sum_{\eta\in S(m,M)}a(\eta,1)\eta(p)p+a_{0,m,M}(p)+\Bigg(\sum_{\eta\in S(m,M)}a(\eta,1)\eta(p)-\delta_{p,m,M}^{\star}\Bigg)\cdot1,
\end{align*}
where we define
$$
\delta_{p,m,M}^{\star}\coloneqq
\begin{dcases}
2&\text{ if }p+1\equiv m\pmod{M}\text{ and }p+1\equiv-m\pmod{M},\\
1&\text{ if }p+1\equiv m\pmod{M}\text{ or }p+1\equiv-m\pmod{M}\text{, and }m\not\equiv-m\pmod{M},\\
0&\text{ if }p+1\not\equiv\pm m\pmod{M}.
\end{dcases}
$$
Then, using (\ref{eqn::highermomentexplicitformula}) and Lemma \ref{thm::twomomentsrelation}, we see that
\begin{align*}
2S_{2,m,M}(p)=\sum_{\eta\in S(m,M)}a(\eta,1)\eta(p)\cdot p^2&+\left(\frac{a_{0,m,M}(p)}{p^{\frac{1}{2}}}+\frac{a_{2,m,M}(p)}{p^{\frac{3}{2}}}\right)\cdot p^{\frac{3}{2}}\\
&+\Bigg(\sum_{\eta\in S(m,M)}a(\eta,1)\eta(p)-\delta_{p,m,M}^{\star}\Bigg)\cdot p-O(1).
\end{align*}

Due to the Sato--Tate conjecture for Fourier coefficients of cusp forms, which is now a theorem in \cite{CHST}, it is known that the average over $p$ of the normalized Fourier coefficients vanishes because the distribution is symmetric around zero. When considering the averages, the contribution from Fourier coefficients of a fixed cusp form is thus always zero. In particular, we see that the coefficient of $p^{\frac{3}{2}}$ above vanishes on average. Therefore, we turn to study the terms of size $p$. The average of the these terms is exactly the average we considered in the introduction,
$$
\mathcal{A}_{2,m,M,1,1}=\frac{1}{2}\lim\limits_{X\to\infty}\frac{1}{\pi(X)}\sum_{p\leq X}\Bigg(\sum_{\eta\in S(m,M)}a(\eta,1)\eta(p)-\delta_{p,m,M}^{\star}\Bigg).
$$

\begin{proof}[Proof of Theorem \ref{thm:biaseval}]
Since prime numbers are uniformly distributed over congruence classes in $(\ZZ/M\ZZ)^{\times}$ for any integer $M\geq1$, we see by (\ref{eqn::divisorcoefficient}) that 
\[
\mathcal{A}_{2,m,M,1,1}=\frac{1}{2\phi(M)}\Bigg(2\prod_{q\mid M}\frac{1-\delta_{q\nmid m}/(q^2-q)}{1+1/q}-\delta_{\gcd(m-1,M)=1}-\delta_{\gcd(m+1,M)=1}\Bigg).
\]
This proves (1). 

To prove (2), it is straightforward to verify that $\mathcal{A}_{2,m,M,1,1}=0$ when $M=1$. Suppose that $M>1$ from now on and suppose to the contrary that $\mathcal{A}_{2,m,M,1,1}=0$. It is clear that
\begin{equation}
\label{eqn::positivetermbound}
0<2\prod_{q\mid M}\frac{1-\delta_{q\nmid m}/(q^2-q)}{1+1/q}<2.
\end{equation}
So the second largest lower order term is unbiased if and only if the first term in \eqref{eqn:biaseval} is equal to $1$ and exactly one of the conditions $\gcd(m-1,M)=1$ and $\gcd(m+1,M)=1$ holds. We hence conclude that 
$$
f\coloneqq2\prod_{q\mid M}\frac{1-\delta_{q\nmid m}/(q^2-q)}{1+1/q}=1.
$$
We claim that $M$ must be of the form $2^t3^s$ with $t,s\geq1$ and $6\mid m$. If $2\nmid M$ and $M$ has at least two distinct prime factors, then it is easy to check that $f$ is not an integer by inspecting the $2$-adic valuation of $f$. If $2\nmid M$ and $M=q^t$ is a prime power, then we see that
$$
f=
\begin{dcases}
\frac{2q}{q+1}\neq1&\text{ if }q\mid m,\\
\frac{2(q^2-q-1)}{q^2-1}\neq1&\text{ if }q\nmid m.
\end{dcases}
$$
Therefore we see that $2\mid M$. If $3\nmid M$, then it is each to check that $f$ is not an integer by inspecting the $3$-adic valuation of $f$. Thus, we show that $6\mid M$. In this case, we see that
$$
f=\left(1-\frac{\delta_{2\nmid m}}{2}\right)\left(1-\frac{\delta_{3\nmid m}}{6}\right)\prod_{\substack{q\mid M\\\gcd(q,6)=1}}\frac{1-\delta_{q\nmid m}/(q^2-q)}{1+1/q}.
$$
Using the assumption that $f=1$, we see that $6\mid m$ and $M$ does not have any prime factor $q\geq5$. This proves the claim. Since $M$ is of the form $2^t3^s$ with $t,s\geq1$ and $6\mid m$, it is not hard to see that $\gcd(m-1,M)>1$ and $\gcd(m+1,M)>1$. It follows that $\mathcal{A}_{2,m,M,1,1}>0$, which is a contradiction.

The assertion (3) is clear because the cuspidal part averages to zero due to \cite{CHST}, as noted above. Thus the terms of size $p^{3/2}$ averages to $0$ and $\mathcal{A}_{2,m,M,1,1}$ is indeed the second largest non-zero average.
\end{proof}

Theorem \ref{eqn:biaseval} allows us to study the sign of the bias by evaluating \eqref{eqn:biaseval}. We consider different cases and the results are summarized in the following and also in Theorem \ref{thm:biasintro}.

\begin{theorem}\label{thm:bias}
Let $m,M\in\ZZ$ be integers such that $M>1$. Then we have:
\begin{enumerate}[leftmargin=8mm]
\item If $M$ is an odd prime power $M=p^t$, then $\mathcal{A}_{2,m,M,1,1}>0$ if and only if $m\equiv\pm1\pmod{p}$.
\item If $6\mid M$, then $\mathcal{A}_{2,m,M,1,1}<0$ if and only if either $\gcd(m-1,M)=1$ or $\gcd(m+1,M)=1$.
\item We have
$$
\liminf\limits_{X\to\infty}\frac{|\left\{(m,M)|1\leq m\leq M\leq X,\mathcal{A}_{2,m,M,1,1}>0\right\}|}{|\left\{(m,M)|1\leq m\leq M\leq X\right\}|}\geq\frac{1}{4},
$$
and
$$
\liminf\limits_{X\to\infty}\frac{|\left\{(m,M)|1\leq m\leq M\leq X,\mathcal{A}_{2,m,M,1,1}<0\right\}|}{|\left\{(m,M)|1\leq m\leq M\leq X\right\}|}\geq\frac{1}{2\pi^2}.
$$
\end{enumerate}
\end{theorem}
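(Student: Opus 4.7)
I would start by rewriting Theorem~\ref{thm:biaseval} as
\[
\mathcal{A}_{2,m,M,1,1}=\frac{1}{2\phi(M)}\bigl(F(m,M)-I_{-}(m,M)-I_{+}(m,M)\bigr),
\]
where $F(m,M):=2\prod_{p\mid M}\frac{1-\delta_{p\nmid m}/(p^2-p)}{1+1/p}$ and $I_{\pm}(m,M):=\delta_{\gcd(m\pm 1,M)=1}\in\{0,1\}$, so that the sign of $\mathcal{A}_{2,m,M,1,1}$ is the sign of the numerator $F-I_{+}-I_{-}\in[F-2,F]$. Part~(1) is then a direct case analysis for $M=p^t$ with $p$ an odd prime: the product defining $F$ has a single factor, and splitting on the cases $p\mid m$, $m\equiv\pm 1\pmod p$, and $m\not\equiv 0,\pm 1\pmod p$ gives numerators $-\tfrac{2}{p+1}$, $\tfrac{p^2-2p-1}{p^2-1}$, and $-\tfrac{2p}{p^2-1}$ respectively (both indicators equal $1$ in the first and third cases, exactly one in the second). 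Only the middle value is positive for $p\geq 3$, proving~(1).

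For Part~(2) the pivotal step will be the bound $F(m,M)\leq 1$ whenever $6\mid M$, with equality if and only if $M=2^a3^b$ and $6\mid m$. I would prove this via a local analysis: the factor at $p=2$ is at most $\tfrac23$ (equality iff $2\mid m$), the factor at $p=3$ is at most $\tfrac34$ (equality iff $3\mid m$), and at every prime $p\geq 5$ it is strictly less than $1$, so $F\leq 2\cdot\tfrac23\cdot\tfrac34=1$ with the stated equality locus. Then I split into three cases: if $I_{+}=I_{-}=0$ the numerator equals $F>0$; if at least one indicator equals $1$ and $F<1$ strictly, the numerator is negative; and in the boundary $F=1$ the condition $6\mid m$ forces $m\pm 1$ coprime to $2^a3^b$, so both indicators equal $1$ and the numerator equals $-1$. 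These cases are exhaustive.

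For Part~(3) I would exhibit explicit sufficient conditions and count pairs. For the positive-bias density, the sufficient condition ``$2\mid M$ and $m$ odd'' forces $I_{\pm}=0$ while $F>0$ always, hence $\mathcal{A}_{2,m,M,1,1}>0$; counting yields $\sum_{\substack{M\leq X\\ 2\mid M}}M/2\sim X^2/8$ against a total of $\sim X^2/2$, giving density at least $1/4$. For the negative-bias density, the sufficient condition $\gcd(m^2-1,M)=1$ forces $I_{+}=I_{-}=1$ while $F<2$ always, hence $\mathcal{A}_{2,m,M,1,1}<0$; the count of admissible $m\in[1,M]$ is $M g(M)$, where the multiplicative function $g(M):=\prod_{p\mid M}(1-2/p)$ vanishes at $p=2$. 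A routine Tauberian argument applied to
\[
\sum_{M\geq 1}\frac{Mg(M)}{M^s}=\zeta(s-1)\prod_p\bigl(1-2/p^s\bigr)
\]
yields $\sum_{M\leq X}Mg(M)\sim(X^2/2)\prod_p(1-2/p^2)$. Factoring $1-2/p^2=(1-1/p^2)\bigl(1-1/(p^2-1)\bigr)$ gives $\prod_p(1-2/p^2)=\tfrac{6}{\pi^2}\prod_p(1-1/(p^2-1))$, and the telescoping bound
\[
-\sum_p\log\!\bigl(1-\tfrac{1}{p^2-1}\bigr)\leq 2\sum_p\tfrac{1}{p^2-1}\leq\sum_{n\geq 2}\Bigl(\tfrac{1}{n-1}-\tfrac{1}{n+1}\Bigr)=\tfrac{3}{2}
\]
gives $\prod_p(1-1/(p^2-1))\geq e^{-3/2}$, whence $\prod_p(1-2/p^2)\geq \tfrac{6e^{-3/2}}{\pi^2}\geq\tfrac{1}{2\pi^2}$, the last step being the elementary inequality $12e^{-3/2}\geq 1$.

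The hardest part will be the equality analysis in Part~(2): one must pin down the equality locus of $F\leq 1$ precisely enough to verify that inside this locus both indicators are automatically equal to $1$, thereby ruling out the dangerous borderline scenario ``$F=1$ with exactly one indicator $=1$'' in which the numerator would vanish rather than being strictly negative.
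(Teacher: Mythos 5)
Your proposal is correct, and for parts (1), (2) and the positive-density half of (3) it follows essentially the paper's route: analyze the sign of the numerator $F(m,M)-I_-(m,M)-I_+(m,M)$ using $0<F<2$ in general and $F\leq 1$ when $6\mid M$, and use the pairs with $M$ even, $m$ odd to get the density $\tfrac14$. Two genuine differences are worth recording. First, your treatment of the boundary case $F=1$ in part (2) is the correct one: for $M=2^t3^s$ with $6\mid m$ one has $m\pm 1$ coprime to $6$, hence $\gcd(m\pm1,M)=1$, both indicators equal $1$, and the numerator is $1-2=-1<0$; the paper instead asserts there that $\gcd(m\pm1,M)>1$ and $\mathcal{A}_{2,m,M,1,1}>0$, which is a sign error (the stated biconditional happens to survive under either reading, since ``$\mathcal{A}<0$'' and ``some $\gcd=1$'' are both true in your analysis and both false in the paper's, but yours is the version that actually closes the case). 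Second, for the negative-bias density the paper restricts to $6\mid M$, uses only the single condition $\gcd(m-1,M)=1$ (contributing $\phi(M)$ values of $m$ per modulus), and evaluates $\sum_{6\mid M\leq X}\phi(M)\sim X^2/(4\pi^2)$ by an elementary M\"obius computation, landing exactly on $\tfrac{1}{2\pi^2}$; you instead use the condition $\gcd(m^2-1,M)=1$, valid for every $M>1$, together with a Dirichlet-series/Tauberian count and explicit inequalities, which gives the stronger constant $\tfrac{6}{\pi^2}e^{-3/2}\approx 0.136>\tfrac{1}{2\pi^2}$ --- closer to the numerically observed $0.56$, at the cost of invoking a Tauberian theorem where the paper stays elementary. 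One small imprecision: the number of admissible $m\in[1,M]$ with $\gcd(m^2-1,M)=1$ equals $Mg(M)$ only for odd $M$; for even $M$ it equals $\tfrac{M}{2}\prod_{p\mid M,\,p>2}(1-2/p)$, which exceeds $Mg(M)=0$, so your count is still a valid lower bound and the argument is unaffected.
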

\begin{proof}
(1) If $m\not\equiv\pm1\pmod{p}$, then we see that
$$
\delta_{\gcd(m-1,M)=1}+\delta_{\gcd(m+1,M)=1}=2.
$$
Therefore, by (\ref{eqn::positivetermbound}), it is clear that $\mathcal{A}_{2,m,M,1,1}<0$. If $m\equiv\pm1\pmod{p}$, then we see that
$$
2\prod_{q\mid M}\frac{1-\delta_{q\nmid m}/(q^2-q)}{1+1/q}=2-\frac{2p}{p^2-1}>1.
$$
Hence, we see that $\mathcal{A}_{2,m,M,1,1}>0$.

(2) In the proof of Theorem \ref{thm:biaseval} (2), we have seen that
$$
2\prod_{q\mid M}\frac{1-\delta_{q\nmid m}/(q^2-q)}{1+1/q}\leq1.
$$
If the above inequality is strict, then it is clear that $\mathcal{A}_{2,m,M,1,1}<0$ if and only if either $\gcd(m-1,M)=1$ or $\gcd(m+1,M)=1$. Otherwise, we have seen that $\gcd(m-1,M)>1$ and $\gcd(m+1,M)>1$, and in this case we have $\mathcal{A}_{2,m,M,1,1}>0$.

(3) If $M$ is even and $m$ is odd, then it is clear that $\gcd(m-1,M)>1$ and $\gcd(m+1,M)>1$. Therefore, it is easy to see that
$$
\liminf\limits_{X\to\infty}\frac{|\left\{(m,M)|1\leq m\leq M\leq X,\mathcal{A}_{2,m,M,1,1}>0\right\}|}{|\left\{(m,M)|1\leq m\leq M\leq X\right\}|}\geq\frac{1}{4}.
$$
By (2), we have
$$
|\left\{(m,M)|1\leq m\leq M\leq X,\mathcal{A}_{2,m,M,1,1}<0\right\}|\geq\sum_{\substack{M\leq X\\6\mid M}}\phi(M).
$$
To estimate it, we imitate a calculation in \cite{hardy1979introduction} as follows,
\begin{align*}
|\left\{(m,M)|1\leq m\leq M\leq X,\mathcal{A}_{2,m,M,1,1}<0\right\}|\geq&\sum_{\substack{d_1d_2\leq X\\6\mid d_1d_2}}\mu(d_1)d_2\\
=&\sum_{g\mid 6}\sum_{\substack{1\leq d_1\leq X\\\gcd(d_1,6)=g}}\mu(d_1)\sum_{\substack{1\leq d_2\leq\lfloor X/d_1\rfloor\\6\mid gd_2}}d_2\\
=&\sum_{g\mid 6}\sum_{\substack{1\leq d_1\leq X\\\gcd(d_1,6)=g}}\mu(d_1)\left(\frac{g}{12}\frac{X^2}{d_1^2}+O\left(\frac{X}{d_1}\right)\right)\\
=&~\frac{X^2}{12}\sum_{g\mid 6}\frac{\mu(g)}{g}\sum_{\substack{1\leq d_1\leq\infty\\\gcd(d_1,6)=1}}\frac{\mu(d_1)}{d_1^2}+O(X\log(X))\\
=&~\frac{X^2}{4\pi^2}+O(X\log(X)).
\end{align*}
Hence, we see that
$$
\liminf\limits_{X\to\infty}\frac{|\left\{(m,M)|1\leq m\leq M\leq X,\mathcal{A}_{2,m,M,1,1}<0\right\}|}{|\left\{(m,M)|1\leq m\leq M\leq X\right\}|}\geq\frac{1}{2\pi^2},
$$
as desired.
\end{proof}

\subsection{Bias in the second moment $S_{2,m,M}(p^2)$}

Finally, we study the bias in the average of the second moment $S_{2,m,M}(p^2)$. Again because $\gcd(p,M)=1$, Theorem \ref{thm::maintheorem2} implies that
\begin{align*}
H_{m,M}(p^2)=\sum_{\eta\in S(m,M)}a(\eta,1)\eta^2(p)p^2+a_{0,m,M}(p^2)+\Bigg(\sum_{\eta\in S(m,M)}a(\eta,1)\eta^2(p)-\delta_{p^2,m,M}^{\star}\Bigg)\cdot p+O(1),
\end{align*}
where we define
$$
\delta_{p^2,m,M}^{\star}\coloneqq
\begin{dcases}
2&\text{ if }2p\equiv m\pmod{M}\text{ and }2p\equiv-m\pmod{M},\\
1&\text{ if }2p\equiv m\pmod{M}\text{ or }2p\equiv-m\pmod{M}\text{, and }m\not\equiv-m\pmod{M},\\
0&\text{ if }2p\not\equiv\pm m\pmod{M}.
\end{dcases}
$$
Then, using (\ref{eqn::highermomentexplicitformula}) and Lemma \ref{thm::twomomentsrelation}, we see that
\begin{align*}
2S_{2,m,M}(p^2)=\sum_{\eta\in S(m,M)}&a(\eta,1)\eta^2(p)\cdot p^4+(a_{0,m,M}(p^2)p^2+a_{2,m,M}(p^2))\\
&+\Bigg(\sum_{\eta\in S(m,M)}a(\eta,1)\eta^2(p)-\delta_{p^2,m,M}^{\star}-H_{0,\overline{p}m,M}(1)\Bigg)\cdot p^3+O(p^2)
\end{align*}

We again know that the contribution to the averages from the cuspidal part is zero due to \cite{CHST}. Thus we consider the average of the terms of size $p^3$, which is given by
$$
\mathcal{A}_{2,m,M,2,3}=\frac{1}{2}\lim\limits_{X\to\infty}\frac{1}{\pi(X)}\sum_{p\leq X}\Bigg(\sum_{\eta\in S(m,M)}a(\eta,1)\eta^2(p)-\delta_{p^2,m,M}^{\star}-H_{0,\overline{p}m,M}(1)\Bigg).
$$

By definition, it is easy to verify that if $M\geq3$, then we have
$$
H_{2,m,M}(1)=
\begin{dcases}
1/3&\text{ if }m\equiv\pm1\pmod{M},\\
-1/3&\text{ if }m\equiv\pm2\pmod{M},\\	
0&\text{ otherwise}.
\end{dcases}
$$
By (\ref{eqn::divisorcoefficient}), if $M\geq3$, we have
\begin{equation}
\label{eqn::biasformula2}
\mathcal{A}_{2,m,M,2,3}=\frac{1}{2\phi(M)}\Bigg(2\prod_{q\mid M}\frac{q^2-q-1}{q^2-1}\sum_{\substack{\eta\in S(m,M)\\N_{\eta^2}=1}}\frac{(-1)^{\omega}\epsilon_{N_{\eta_0}}^3\Phi_2(\eta)}{G(\eta)\sqrt{N_{\eta}}}\prod_{q\mid mN_{\eta}}\frac{q}{q^2-q-1}+\epsilon_{m,M}\Bigg),
\end{equation}
where $\omega$ is the number of distinct prime factors of $N_{\eta}$ and we set
$$
\epsilon_{m,M}\coloneqq
\begin{dcases}
-2&\text{ if }M\equiv1\pmod{2}\text{ and }\gcd(m,M)=1,\\
-4/3&\text{ if }M\equiv0\pmod{2},m\equiv0\pmod{2},\text{ and }\gcd(m/2,M/2)=1,\\
-2/3&\text{ if }M\equiv0\pmod{2},m\equiv1\pmod{2},\text{ and }\gcd(m,M)=1,\\
0&\text{ otherwise}.
\end{dcases}
$$

With this formula, it is not hard to determine the sign of the bias.

\begin{theorem}
\label{thm::lasttheorem}
Let $m,M\in\ZZ$ be integers such that $M\geq3$ and $4\nmid M$. Then $\mathcal{A}_{2,m,M,2,3}<0$ if and only if $\gcd(m,M_{\operatorname{odd}})=1$.
\end{theorem}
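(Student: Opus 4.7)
The plan is to work from the explicit formula \eqref{eqn::biasformula2}. The restriction $N_{\eta^2}=1$ in the inner sum forces $\eta$ to be trivial or a primitive quadratic character. Under the hypothesis $4\nmid M$, the $2$-local component $\eta_2$ is necessarily trivial, since any non-trivial primitive real character of $2$-power conductor has conductor $4$ or $8$, both of which are excluded when $\ord_2(M)\le 1$. Consequently $\eta$ decomposes as a product $\eta=\prod_{q\mid M_{\operatorname{odd}}}\eta_q$, where at each odd prime $q\mid M$ the local factor $\eta_q$ is either trivial or the unique primitive quadratic character of conductor $q$; in either case the constraint from $S(m,M)$ is automatically satisfied.

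Next I would factor the inner sum in \eqref{eqn::biasformula2} as an Euler product indexed by the odd prime divisors of $M$. The quantities $(-1)^\omega$, $\epsilon_{N_{\eta_0}}^3$, $\Phi_2(\eta)$, $G(\eta)$ and $\sqrt{N_\eta}$ are all multiplicative in $\eta$; combined with the piece $\prod_{q\mid mN_\eta}\frac{q}{q^2-q-1}$ this writes the sum as $\prod_{q\mid M_{\operatorname{odd}}} F_q(m)$, where each local factor $F_q(m)$ depends only on whether $q\mid m$. Using the standard evaluation $G(\chi_q^{*})=\varepsilon_q\sqrt{q}$ for the quadratic character of conductor $q$, one computes $F_q(m)$ as an elementary rational expression in $q$. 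After multiplying by the prefactor $\prod_{q\mid M}\frac{q^2-q-1}{q^2-1}$, the main term of \eqref{eqn::biasformula2} takes a closed-form shape as a product over the primes dividing $M$.

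The final step is a case analysis driven by the definition of $\epsilon_{m,M}$. By inspection $\epsilon_{m,M}\neq 0$ precisely when $\gcd(m,M_{\operatorname{odd}})=1$ (paired with the parity condition on $M$ permitted by $4\nmid M$); in each of the three nonzero subcases one shows that the closed-form main term is strictly smaller in absolute value than $|\epsilon_{m,M}|$, so the overall sign in \eqref{eqn::biasformula2} is governed by $\epsilon_{m,M}$ and is negative. Conversely, when $\gcd(m,M_{\operatorname{odd}})>1$ one has $\epsilon_{m,M}=0$, and a short positivity argument in the spirit of \eqref{eqn::positivetermbound} from the proof of Theorem \ref{thm:biaseval} shows that the remaining main term is strictly positive.

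The main technical obstacle is the sharp bound $|\text{main term}|<|\epsilon_{m,M}|$ in the subcase where $\epsilon_{m,M}\neq 0$, especially when $M=2M_{\operatorname{odd}}$ with many odd prime factors; one must track the precise interaction of the sign factors $(-1)^\omega$, $\varepsilon_q$ and $\Phi_2(\eta)$ to rule out adversarial cancellations and ensure the Euler product $\prod_{q\mid M_{\operatorname{odd}}} F_q(m)$ is controlled. A parallel subtlety is maintaining strict positivity of the main term when several odd primes $q\mid M$ divide $m$, where local sign factors could in principle combine to produce zero or a negative value.
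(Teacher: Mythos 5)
Your proposal is correct and follows essentially the same route as the paper: both start from \eqref{eqn::biasformula2}, use multiplicativity (with $G(\chi_q)=\varepsilon_q\sqrt{q}$ and the fact that $4\nmid M$ kills the $2$-part of $\eta$) to collapse the character sum into an Euler product of explicit positive local factors over the primes dividing $M$, and then compare that product against $\epsilon_{m,M}$, which vanishes exactly when $\gcd(m,M_{\operatorname{odd}})>1$. The ``technical obstacle'' you flag is benign: each local factor evaluates to one of $\tfrac{q-2}{q-1}$, $\tfrac{q}{q+1}$, $\tfrac{1}{q\pm1}$ (all in $(0,1)$), so the main term stays positive and below $|\epsilon_{m,M}|$ in every nonzero subcase, just as in the paper's computation.
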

\begin{proof}
If $\gcd(m,M_{\operatorname{odd}})>1$, then we see that $\gcd(m,M)$ is odd. It follows that $\epsilon_{m,M}=0$ and thus $\mathcal{A}_{2,m,M,2,3}>0$. From now on, we assume that $\gcd(m,M_{\operatorname{odd}})=1$. Assume that $M\equiv1\pmod{2}$. Then we have $\epsilon_{m,M}=-2$. Furthermore using multiplicativity, it is easy to see that
\begin{align*}
2\prod_{q\mid M}\frac{q^2-q-1}{q^2-1}\sum_{\substack{\eta\in S(m,M)\\N_{\eta^2}=1}}\frac{(-1)^{\omega}\epsilon_{N_{\eta_0}}^3\Phi_2(\eta)}{G(\eta)\sqrt{N_{\eta}}}\prod_{q\mid mN_{\eta}}\frac{q}{q^2-q-1}=2\prod_{q\mid M}\frac{q^2-q-1+(-1)^{\frac{q+1}{2}}}{q^2-1}<2.
\end{align*}
Therefore we see that $\mathcal{A}_{2,m,M,2,3}<0$ in this case. Assume that $M\equiv2\pmod{4}$. Then we have
$$
\epsilon_{m,M}=
\begin{dcases}
-4/3&\text{ if }m\equiv0\pmod{2},\\
-2/3&\text{ if }m\equiv1\pmod{2}.
\end{dcases}
$$
The calculation of the positive term is similar to the odd cases. Thus, it is easy to see that $\mathcal{A}_{2,m,M,2,3}<0$ in this case. This finishes the proof.
\end{proof}

\bibliographystyle{plain}
\bibliography{reference}

\begin{thebibliography}{10}

\bibitem{Asada}
Megumi Asada, Ryan~C Chen, Eva Fourakis, Yujin~Hong Kim, Andrew Kwon,
  Jared~Duker Lichtman, Blake Mackall, Steven~J Miller, Eric Winsor, Karl
  Winsor, et~al.
\newblock Lower-order biases in the second moment of dirichlet coefficients in
  families of l-functions.
\newblock {\em Experimental Mathematics}, 32(3):431--456, 2023.

\bibitem{aygin2022projections}
Zafer~Selcuk Aygin.
\newblock Projections of modular forms on {E}isenstein series and its
  application to {S}iegel’s formula.
\newblock {\em The Ramanujan Journal}, 57(4):1223--1252, 2022.

\bibitem{bringmann2017harmonic}
Kathrin Bringmann, Amanda Folsom, Ken Ono, and Larry Rolen.
\newblock {\em Harmonic {Maass} forms and mock modular forms: theory and
  applications}, volume~64.
\newblock American Mathematical Soc., 2017.

\bibitem{bringmann2019sums}
Kathrin Bringmann and Ben Kane.
\newblock Sums of class numbers and mixed mock modular forms.
\newblock {\em Mathematical Proceedings of the Cambridge Philosophical
  Society}, 167(2):321--333, 2019.

\bibitem{bringmann2022conjectures}
Kathrin Bringmann and Ben Kane.
\newblock Conjectures of {S}un about sums of polygonal numbers.
\newblock {\em La Matematica}, 1(4):809--828, 2022.

\bibitem{bringmann2021odd}
Kathrin Bringmann, Ben Kane, and Sudhir Pujahari.
\newblock Odd moments for the trace of {Frobenius and the Sato--Tate}
  conjecture in arithmetic progressions.
\newblock {\em arXiv preprint arXiv:2112.08205}, 2021.

\bibitem{bringmann2023formulas}
Kathrin Bringmann, Ben Kane, and Sudhir Pujahari.
\newblock Formulas for moments of class numbers in arithmetic progressions.
\newblock {\em Acta Arithmetica}, 207:19--38, 2023.

\bibitem{brown2008elliptic}
Brittany Brown, Neil~J Calkin, Timothy~B Flowers, Kevin James, Ethan Smith, and
  Amy Stout.
\newblock Elliptic curves, modular forms, and sums of {H}urwitz class numbers.
\newblock {\em Journal of Number Theory}, 128(6):1847--1863, 2008.

\bibitem{cho2018number}
Bumkyu Cho.
\newblock On the number of representations of integers by quadratic forms with
  congruence conditions.
\newblock {\em Journal of Mathematical Analysis and Applications},
  462(1):999--1013, 2018.

\bibitem{CHST}
L.~Clozel, M.~Harris, N.~Shepherd-Barron, and R.~Taylor.
\newblock Automorphy for some $\ell$-adic lifts of automorphic mod $\ell$
  {G}alois representations.
\newblock {\em Publ. Math. Inst. Hautes Études Sci.}, 108:1--181, 2008.

\bibitem{cohen1975sums}
Henri Cohen.
\newblock Sums involving the values at negative integers of {$L$-functions} of
  quadratic characters.
\newblock {\em Mathematische Annalen}, 217(3):271--285, 1975.

\bibitem{deligne1974conjecture}
Pierre Deligne.
\newblock La conjecture de {Weil}. {I}.
\newblock {\em Publications Math{\'e}matiques de l'Institut des Hautes
  {\'E}tudes Scientifiques}, 43(1):273--307, 1974.

\bibitem{gierster1883ueber}
Joseph Gierster.
\newblock Ueber {R}elationen zwischen {K}lassenzahlen bin{\"a}rer quadratischer
  {F}ormen von negativer {D}eterminante.
\newblock {\em Mathematische Annalen}, 21(1):1--50, 1883.

\bibitem{gross1986heegner}
BH~Gross and DB~Zagier.
\newblock Heegner points and derivatives of {$L$}-series.
\newblock {\em Inventiones mathematicae}, 84:225--320, 1986.

\bibitem{hardy1979introduction}
Godfrey~Harold Hardy and Edward~Maitland Wright.
\newblock {\em An introduction to the theory of numbers}.
\newblock Oxford university press, 1979.

\bibitem{hurwitz1885ueber}
Adolf Hurwitz.
\newblock Ueber {R}elationen zwischen {C}lassenanzahlen bin{\"a}rer
  quadratischer {F}ormen von negativer {D}eterminante.
\newblock {\em Mathematische Annalen}, 25(2):157--196, 1885.

\bibitem{kane2020distribution}
Ben Kane and Sudhir Pujahari.
\newblock Distribution of moments of hurwitz class numbers in arithmetic
  progressions and holomorphic projection.
\newblock {\em Transactions of the American Mathematical Society},
  376(08):5503--5519, 2023.

\bibitem{KS2}
Nicholas~M Katz and Peter Sarnak.
\newblock Zeros of zeta functions and symmetries.
\newblock {\em Bull. AMS}, 36:1--26, 1999.

\bibitem{KS1}
Nicholas~M Katz and Peter Sarnak.
\newblock {\em Random matrices, {F}robenius eigenvalues, and monodromy},
  volume~45.
\newblock American Mathematical Society, 2023.

\bibitem{KazalickiNaskrecki}
Matija Kazalicki and Bartosz Naskr{\k{e}}cki.
\newblock Second moments and the bias conjecture for the family of cubic
  pencils.
\newblock {\em arXiv preprint arXiv:2012:11306}, 2021.

\bibitem{kronecker1860ueber}
L~Kronecker.
\newblock Ueber die {A}nzahl der verschiedenen {C}lassen quadratischer {F}ormen
  von negativer {D}eterminante.
\newblock {\em Journal f{\"u}r die reine und angewandte Mathematik},
  57:248--255, 1860.

\bibitem{mertens2016eichler}
Michael~H Mertens.
\newblock {Eichler--Selberg} type identities for mixed mock modular forms.
\newblock {\em Advances in Mathematics}, 301:359--382, 2016.

\bibitem{Michel}
Philippe Michel.
\newblock Rang moyen de familles de courbes elliptiques et lois de sato-tate.
\newblock {\em Monatshefte f{\"u}r Mathematik}, 120:127--136, 1995.

\bibitem{Mlr1}
Steven~J. Miller.
\newblock One- and two-level densities for families of elliptic curves:
  evidence for the underlying group symmetries.
\newblock {\em PhD thesis}, 2002.

\bibitem{Mlr2}
Steven~J Miller.
\newblock One-and two-level densities for rational families of elliptic curves:
  evidence for the underlying group symmetries.
\newblock {\em Compositio Mathematica}, 140(4):952--992, 2004.

\bibitem{MW}
Steven~J. Miller and Yan Weng.
\newblock Biases in moments of the dirichlet coefficients in one-and
  two-parameter families of elliptic curves.
\newblock {\em arXiv preprint arXiv:2103.03942}, 2021.

\bibitem{montgomery2007multiplicative}
Hugh~L Montgomery and Robert~C Vaughan.
\newblock {\em Multiplicative number theory I: {C}lassical theory}.
\newblock Number~97 in Cambridge Studies in Advanced Mathematics. Cambridge
  university press, 2007.

\bibitem{ono2021distribution}
Ken Ono, Hasan Saad, and Neelam Saikia.
\newblock Distribution of values of {G}aussian hypergeometric functions.
\newblock {\em Pure and Applied Mathematics Quarterly}, 19(1):371--407, 2023.

\bibitem{schoeneberg2012elliptic}
Bruno Schoeneberg.
\newblock {\em Elliptic modular functions: an introduction}, volume 203.
\newblock Springer Science \& Business Media, 2012.

\bibitem{schoof1987nonsingular}
Ren{\'e} Schoof.
\newblock Nonsingular plane cubic curves over finite fields.
\newblock {\em Journal of combinatorial theory, Series A}, 46(2):183--211,
  1987.

\bibitem{shimura1973modular}
Goro Shimura.
\newblock On modular forms of half integral weight.
\newblock {\em Annals of Mathematics}, 97(3):440--481, 1973.

\bibitem{zagier1974nombres}
Don Zagier.
\newblock Nombres de classes et formes modulaires de poids 3/2.
\newblock {\em S{\'e}minaire de Th{\'e}orie des Nombres de Bordeaux}, 4:1--2,
  1974.

\bibitem{zindulka2024sums}
Mikul{\'a}{\v{s}} Zindulka.
\newblock Sums of hurwitz class numbers, cm modular forms, and primes of the
  form $ x^2+ ny^2$.
\newblock {\em arXiv preprint arXiv:2405.07565}, 2024.

\end{thebibliography}

\end{document}